\documentclass[11pt]{amsart}
\usepackage{amssymb}
\usepackage[all]{xy}
\usepackage{graphicx}
\newcommand{\PP}{\mathbb P}

\newcommand\CC{{\mathbb C}}

\newcommand\KK{{\mathbb K}}


\newcommand{\mZ}{\mathcal Z} 
\newcommand{\mT}{\mathcal T}

\newcommand{\TLL}{\hat{L}}

\DeclareMathOperator{\sing}{\operatorname{sing}}

\numberwithin{equation}{section}
\begin{document}

\title{On quartics with lines of the second kind}

\author{S\l awomir Rams}
\address{Institute of Mathematics, Jagiellonian University, 
ul. {\L}ojasiewicza 6,  30-348 Krak\'ow, Poland}
\address{Current address: 
Institut f\"ur Algebraische Geometrie, Leibniz Universit\"at
  Hannover, Welfengarten 1, 30167 Hannover, Germany} 
\email{slawomir.rams@uj.edu.pl}

\author{Matthias Sch\"utt}
\address{Institut f\"ur Algebraische Geometrie, Leibniz Universit\"at
  Hannover, Welfengarten 1, 30167 Hannover, Germany}
\email{schuett@math.uni-hannover.de}

\thanks{Funding by ERC StG 279723 (SURFARI) and NCN grant no. N N201 608040 (S. Rams) is gratefully acknowledged.}
\subjclass[2010]
{Primary: {14J28, 14J25};  Secondary {14J70}}

\date{March 6, 2013}

\begin{abstract}
We study the geometry of quartic surfaces in 
$\PP^{3}$ that contain a line  of the second kind
over algebraically closed fields of characteristic different from 2,3.
In particular, we 
correct Segre's claims made for the complex case in 1943.
\end{abstract}

\maketitle

\newcommand{\XXd}{X_{d}}
\newcommand{\XXf}{X}
\newcommand{\XXp}{X_{5}}
\newcommand{\Ruledeight}{S_{8}}
\newcommand{\Ruledfive}{S_{5}}
\newcommand{\Ruledfourf}{S}
\newcommand{\Ruledfour}{S_{4}}
\newcommand{\Ruledtwo}{S_{2}}
\newcommand{\DivisorRest}{D}
\newcommand{\Pl}{\Pi}
\newcommand{\reg}{\operatorname{reg}}
\theoremstyle{remark}
\newtheorem{obs}{Observation}[section]
\newtheorem{rem}[obs]{Remark}
\newtheorem{example}[obs]{Example}
\newtheorem{claim}[obs]{Claim}
\theoremstyle{definition}
\newtheorem{Definition}[obs]{Definition}
\theoremstyle{plain}
\newtheorem{prop}[obs]{Proposition}
\newtheorem{theo}[obs]{Theorem}
\newtheorem{lemm}[obs]{Lemma}
\newtheorem{constr}[obs]{Construction}
\newtheorem*{conj}{Conjecture}
\newtheorem{cor}[obs]{Corollary}

\newcommand{\ux}{\underline{x}}
\newcommand{\ud}{\underline{d}}
\newcommand{\ue}{\underline{e}}
\newcommand{\mmS}{{\mathcal S}}
\newcommand{\mmP}{{\mathcal P}}
\newcommand{\nlines}{\mbox{\texttt l}(\XXp)}
\newcommand{\ii}{\operatorname{i}}

\newcommand{\nonlinflec}{{\mathcal D}}
\newcommand{\linflec}{{\mathcal L}}
\newcommand{\flec}{{\mathcal F}}

\section{Introduction} \label{sect-introduction}

The main aim of this note is to study the geometry of certain quartics in $\PP^3$ that contain
a line. 
Our investigations concern not only the field of complex numbers $\CC$,
but any algebraically closed field $\KK$ of characteristic  $\neq$ $2$, $3$. 
Specifically, let $\XXf \subset \PP^3_{\KK}$  be a smooth quartic surface that contains a line  $\ell$. 
The linear 
system $|{\mathcal O}_{\XXf}(1) - \ell|$ endows the surface in question with an elliptic  
fibration
\begin{equation} \label{eq:fibration}
\pi \, : \, \XXf \to\PP^1 .
\end{equation}
The geometry of \eqref{eq:fibration} can in principle be used to determine all lines on $X$.
Notably, any line on $X$ meeting $\ell$ 
occurs as component of a singular fiber of \eqref{eq:fibration}.
Euler number considerations yield 
that $\ell$ meets at most $24$ other lines on $X$.
Following Segre \cite{Segre} this bound can be improved 
by noticing that the intersection point of $\ell$ with any other line on $X$
lies in the closure  of the locus of  inflection points of smooth fibers of the fibration
\eqref{eq:fibration}. 
Thus elimination theory 
gives the (sharp) upper bound of 18 other lines on $X$
possibly met by $\ell$
unless $\ell$ is contained in the flex locus (cf.~\cite[p.88]{Segre},~\cite[Lem.~5.2]{ramsschuett}).
Therefore  the lines satisfying the latter condition are crucial for a good understanding of 
possible configurations of lines on smooth quartics.
This motivated B.~Segre to formulate 
the following definition (cf. \cite[p.~87]{Segre}) that  played  central role in his attempt
to show that a smooth complex
 quartic contains at most 64 lines (as proved in \cite[Thm. 1.1]{ramsschuett}).  
 
\begin{Definition}[Lines of the second kind]
The line  $\ell$ is of the second kind iff it is contained in the closure  of the flex locus of the smooth fibers of the fibration
\eqref{eq:fibration}. 
\end{Definition}


 The starting point of this paper is Segre's observation that every smooth  quartic obtained by perturbing
the equation of a ruled quartic $\Ruledfourf$ with  a product of 4 planes
that vanish along  a directrix $\ell$ of $\Ruledfourf$, i.e.  
\begin{equation} \label{eq:segint}
\Ruledfourf + \TLL_1 \cdot \ldots \cdot \TLL_4,  \mbox{ where the planes }  \TLL_1, \ldots, \TLL_4 \mbox{ meet along the directrix } \ell , 
\end{equation}
contains $\ell$ as a line of the second kind (see Construction~\ref{obs-segretrick}).
Moreover, Segre  claimed that any quartic  with a line of the second kind is given by \eqref{eq:segint}.
The latter statement was used in \cite{Segre} to 
deduce various properties of quartics with lines of the second kind (see Sect.~\ref{sect-segreclaims} for details).  
However, in full generality Segre's claims are wrong.   
Notably, in \cite{ramsschuett} we exhibited  a family $\mZ$ of quartics
 that crucially contradicted certain bounds stated in  
\cite{Segre} (see Sect.~\ref{s:corr}).
Since our main aim was  to give a correct characteristic-free proof of the 
bound on the number of lines, 
the question why Segre's idea  does not work in general was not adressed in 
\cite{ramsschuett}.
 
In this paper, we study the geometry of smooth quartics with a line of the second kind in detail.
Following an idea from \cite{ramsschuett}
we show that there are 3 distinct families of smooth quartics with such  lines, 
distinguished by the ramification type $R$ of the morphism $\pi|_\ell$,
the restriction of \eqref{eq:fibration} to $\ell$.
Here $\pi|_\ell$ has either 2 ramification points (type $R=2^2$),
3 ramification points (type $R=2,1^2$), or 4 ramification points (type $R=1^4$). The following condition on the fibration \eqref{eq:fibration} will arise naturally in this paper:
\begin{equation} \label{eq:segcond}
\mbox{ If a fiber  } F \mbox{ of \eqref{eq:fibration} meets } \ell  \mbox{ in exactly one point, then } F \mbox{ has type } IV . 
\end{equation}

The main results of this paper are summarised in the following theorem,
see Sect.~\ref{s:thm} for a collection of the ingredients of the proof and 
Sect.~\ref{s:corr2},~\ref{s:corr} for the definition of the families $\mT$, $\mZ$.

\begin{theo}
\label{thm}
Let $\XXf$
 be a  smooth quartic with a line $\ell$  of the second kind of ramification type $R$.
\begin{itemize}
\item[(a)]
The surface $\XXf$ can be obtained by Segre's construction \eqref{eq:segint}
if and only if the fibration  \eqref{eq:fibration} satifies the condition \eqref{eq:segcond}.  
\item[(b)]
If $R=2,1^2$
(resp. $R=2^2$), then $\XXf$ is projectively equivalent to a member of the family $\mT$ (resp. $\mZ$).
\end{itemize}
In particular, every quartic  with a line of the second kind of ramification type $R=1^4$ can be written as \eqref{eq:segint}, whereas
within smooth quartics with a line of the second kind of ramification type $R=2,1^2$ (resp. $R=2^2$),
those of the shape \eqref{eq:segint} have codimension 1 (resp. codimension 2).
\end{theo}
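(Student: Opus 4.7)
The plan is to decompose the theorem along the three possible ramification types $R$ and to use the inflection-tangent structure along $\ell$ to reconstruct Segre's data when possible. The degree of $\pi|_\ell$ is $(\mo_X(1)-\ell)\cdot\ell=3$, so Riemann--Hurwitz gives a ramification divisor of degree $4$; a fiber meets $\ell$ at exactly one point iff $\pi|_\ell$ is totally ramified (index $3$) there. Hence the number of such fibers is $0$, $1$, or $2$ according as $R=1^4$, $R=2,1^2$, or $R=2^2$; in particular condition \eqref{eq:segcond} is vacuous for $R=1^4$.

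For the forward direction of (a), starting from $\XXf=\Ruledfourf+\TLL_1\cdots\TLL_4$ one inspects the four distinguished fibers: restricting the defining equation to the plane $\TLL_i$ annihilates the second summand, so $\XXf\cap\TLL_i=\Ruledfourf\cap\TLL_i$ as divisors in $\TLL_i$, with residual cubic $\Ruledfourf\cap\TLL_i-\ell$. A direct analysis of ruled quartics with directrix $\ell$ shows that whenever the corresponding base point is totally ramified under $\pi|_\ell$, the residual cubic degenerates into three concurrent lines at that ramification point, giving a type-$IV$ fiber. For the converse, I would reconstruct $\Ruledfourf$ intrinsically from the inflection tangents: at a general $p\in\ell$ the fiber $F_{\pi(p)}$ has a flex at $p$, and the inflection tangent $T_p\subset\PP^3$ traces out a ruled surface with $\ell$ as a directrix. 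Condition \eqref{eq:segcond} is precisely what ensures that $T_p$ extends across the totally ramified points as one of the three components of the corresponding type-$IV$ fiber, so that $\Ruledfourf$ has degree $4$. With $\ell=V(x,y)$, one then verifies that the defining polynomials of $\XXf$ and $\Ruledfourf$ agree modulo the ideal $(x,y)^4$; their difference therefore factors as a product of four linear forms in $x,y$, i.e., four planes through $\ell$. The main technical difficulty is the local analysis of $T_p$ near the ramification points, which is exactly where \eqref{eq:segcond} is forced.

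For part (b) I would use a normal-form approach. Fix $\ell=V(x,y)$ and expand the equation of $\XXf$ as $\sum_{i+j\geq 1} a_{ij}(z,w)\, x^iy^j$; after placing the ramification points of $\pi|_\ell$ at standard positions on $\ell$ (one triple and two simple for $R=2,1^2$, two triple for $R=2^2$), the condition that $\ell$ lie in the flex locus of the smooth fibers translates into explicit vanishing equations for the coefficients $a_{ij}$. Combined with the residual projective freedom fixing $\ell$ and the chosen ramification pattern, these constraints cut out precisely the families $\mT$ (for $R=2,1^2$) and $\mZ$ (for $R=2^2$) defined in Sections~\ref{s:corr2} and~\ref{s:corr}, yielding projective equivalence to a member of the appropriate family.

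The ``in particular'' statement is then immediate: combining (a) with the dichotomy above, for $R=1^4$ condition \eqref{eq:segcond} is vacuous so every $\XXf$ has shape \eqref{eq:segint}; for $R=2,1^2$ one must further require the unique totally ramified fiber to be type $IV$, a codimension-$1$ condition inside $\mT$; and for $R=2^2$ both totally ramified fibers must be type $IV$, cutting out a codimension-$2$ locus of $\mZ$.
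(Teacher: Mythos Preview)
Your overall architecture---splitting by ramification type, handling (b) by a normal-form computation, and deducing the ``in particular'' clause from (a)---matches the paper exactly. The forward direction of (a) is also essentially the paper's Lemma~\ref{lem:ramIV}, though you should restrict not to the $\TLL_i$ but to an \emph{arbitrary} plane $H\supset\ell$ containing a totally ramified fiber: since each $\TLL_i$ meets $H$ in $\ell$, the product $\TLL_1\cdots\TLL_4$ restricts to a multiple of $x_3^4$ on $H$, and the ruled-quartic structure of $\Ruledfourf\cap H$ then forces the residual cubic to be three concurrent lines.

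The real gap is in the converse of (a). You construct $\Ruledfourf$ as the closure of the inflection tangents $T_p$ and then assert two things without proof: that condition~\eqref{eq:segcond} forces $\deg\Ruledfourf=4$, and that $\XXf$ and $\Ruledfourf$ agree modulo $(x,y)^4$. Neither is automatic. For the first, the closure of $\{T_p\}$ is the irreducible residual component of $S_8$ after removing the tangent planes at $\operatorname{supp}(\mathcal R)$; the paper shows by explicit computation (Prop.~\ref{prop:S_5}, Lemma~\ref{lem:S_6}) that this residual has degree $5$ for generic $R=2,1^2$ and degree $6$ for generic $R=2^2$, because $\ell$ sits in it with multiplicity $>1$. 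The drop to degree $4$ under \eqref{eq:segcond} is not a matter of $T_p$ ``extending''---it always extends by properness---but of the multiplicity of $\ell$ in the ruled surface dropping to $1$, which the paper establishes by direct calculation in the families $\mT$ and $\mZ$. For the second claim, even once $\deg\Ruledfourf=4$ is known, the assertion $\XXf-\Ruledfourf\in(x,y)^4$ is the heart of the matter: it is equivalent to $\Ruledfour\cdot\XXf\geq 4\ell$ and requires the flex hypothesis in an essential way. The paper proves this as Lemma~\ref{lem:S4tr} via an intersection-theoretic argument---the inflection tangent $\tilde\ell=T_PF_P$ satisfies $\tilde\ell\cdot\XXf=4P$, so $\Ruledfour$ and $\XXf$ meet to order $\geq 4$ along $\ell$; the residual cycle is then identified with twelve lines lying in four planes $L_j$, whence $\Ruledfour\cdot\XXf=(L_1\cdots L_4)\cdot\XXf$ and the Segre decomposition follows. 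Your sentence ``one then verifies'' hides exactly this step.
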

 

As a result, we verify that Segre's claims generally do not hold
for smooth quartics with a line of the second kind of ramification type $R=2,1^2$ or $R=2^2$
(see Sect.~\ref{s:corr2},~\ref{s:corr}). On the other hand, we show that if a quartic $\XXf$ is given by \eqref{eq:segint}, then both $\Ruledfourf$ and 
$\TLL_1, \ldots, \TLL_4$ are uniquely determined by $(\XXf, \ell)$ and can be effectively computed (see Prop.~\ref{prop-type1111}  and Cor.~\ref{cor:211segre},~\ref{cor:22segre}). 
In this case, the degree of the singular locus $\sing(\Ruledfourf)$ and the ramification type $R$ 
determine the Kodaira types of all singular fibers of \eqref{eq:fibration}  (compare Claim~\ref{eq-numberoflines}). 

\medskip

Our interest in 
surfaces with lines of the second kind has various reasons. 
Firstly, lines on surfaces play a key role in arithmetic considerations (see e.g. \cite[Thm.~2]{Voloch}, \cite{harris-caporaso}, \cite{harris-tschinkel}).
In the case of quartics, lines of the second kind appear (implicitly) for example 
in \cite[proof of Thm~5.1]{harris-tschinkel}.
Secondly,  even for $\KK = \CC$ the maximal number of lines on smooth degree-$d$ surfaces in $\PP^3_\KK$ remains unknown for $d \geq 5$. 
The existence of lines of the second kind is  one of the obstacles to improving the general bound
given in  \cite[$\S$~4]{Segre} (see e.g. \cite[Prop.~6.2]{boissieresarti}). 
A better understanding of the degree-$4$
case sheds  light on the question what happens for higher degrees,
which we plan to address in future work. 
Finally, one of our main aims is to clarify certain misconceptions/errors that appear in the classical paper \cite{Segre}, and thus correct the current picture of certain aspects
of the geometry of  quartics in $\PP^3$ that contain
a line.

\medskip

{\em Conventions:} Unless otherwise indicated, in this note we work over an algebraically closed field $\KK$ of characteristic $p \neq 2,3$. 
By abuse of notation, whenever it leads to no ambiguity,  we use the same symbol to denote a homogeneous polynomial and the set of its zeroes.

\section{Segre's construction of quartics with lines of the second kind} \label{sect-segreconstruction}

Let $\XXf \subset \PP^{3}_{\KK}$ be a {\bf smooth} quartic surface that contains a line $\ell$. For
 a point $P \in \ell$ we put $F_P$  to denote   the fiber of the fibration \eqref{eq:fibration} that contains $P$;
equivalently $F_P$ is the planar cubic 
residual to the line $\ell$ in the intersection of $\XXf$ with the  tangent space $\operatorname{T}_{P} \XXf$:
\[
X\cap T_PX = \ell + F_P.
\]

The restriction of the fibration \eqref{eq:fibration} to the line $\ell$ defines 
a degree $3$ map 
\begin{equation} \label{eq-map}
\pi|_\ell: \ell \rightarrow \PP^1 .
\end{equation}
Following \cite[$\S$~3]{ramsschuett} we put $\mathcal R$ to denote the ramification divisor of \eqref{eq-map}.
By the Hurwitz formula, one has $\deg(\mathcal R)=4$, so we distinguish 
 three possible {\bf ramification types}:   
$$ R = 1^4\;\; \text{ (i.e. $\mathcal R$ is a sum of four distinct points)}, \;\;\;R=2,1^2 \;\;
\text{ or } \;\; R=2^2. $$

In his attempt to describe all complex quartics with lines of the second kind, 
Segre uses the geometry of ruled surfaces in the classical sense, i.e surfaces covered by lines. 
For the convenience of the reader, in the remark below we collect  certain facts about ruled surfaces (in the classical sense), that can be found in \cite{puttop}.

\begin{rem} \label{japtop} Let $S \subset \PP^{3}_{\KK}$ be a reduced, irreducible surface that is a union of a family of lines
and which is not a cone over a planar curve (such surfaces were classically called  ruled surfaces, see \cite[p.~152]{puttop}).
The lines on $S$ give an algebraic subset $\tilde{C}$ of the Grassmanian $\mbox{Gr}(2,4)$.  
By \cite[Lemma~1.2]{puttop}
and \cite[Corollary~1.6]{puttop} the set $\tilde{C}$ consists of an irreducible curve $C$ and at most two points (so-called isolated lines - see \cite[Def.~1.4]{puttop}).
A line $\tilde{\ell}\subset S$ is a directrix iff it meets all lines from $C$ (equivalently, the curve $C$ is contained in the tangent space  $\mbox{T}_{\tilde{\ell}}\mbox{Gr}(2,4)$  of the Grassmanian
 at the point $\tilde{\ell}$).
\end{rem}

The following simple observation forms the backbone of Segre's considerations in \cite[$\S$~6]{Segre}.

\begin{constr} \label{obs-segretrick}
Let $\XXf \subset \PP^{3}_{\KK}$ be a smooth quartic surface that contains a line $\ell$. Assume that $\XXf$ is given by the equation
\begin{equation} \label{eq-segretrick}
\Ruledfourf + \TLL_1 \cdot \ldots \cdot \TLL_4 ,
\end{equation}
where the quartic $\Ruledfourf$ is a union  of a family of lines, each of which meets $\ell$,
and the planes $\TLL_1, \ldots, \TLL_4$ meet along the line $\ell$.
Then $\ell$ is a line of the second kind on $\XXf$.  
\end{constr}

\begin{proof}
Let $\ell' \subset S$, $\ell' \neq \ell$ be a line that 
is not contained in the union of the planes $\TLL_1, \ldots, \TLL_4$,
 and let $P$ be the point in $\ell \cap \ell'$. 
 By \eqref{eq-segretrick} we have
$$
\ell' \cdot (\ell + F_P) = \ell'\cdot X  = 4 P,
$$
so $P$ is an inflection point of the cubic $F_P$. Consequently, the line $\ell$ meets the flex locus of the  smooth fibers of the fibration
\eqref{eq:fibration} in infinitely many points, so $\ell$ is contained in its closure.
\end{proof}


On the other hand 
a line $\ell$ (not necessarily of the second kind)  on a smooth quartic $\XXf \subset \PP^{3}_{\KK}$
gives rise to a hypersurface that is covered by lines as follows.
Consider the set
\begin{eqnarray} \label{eq:familySl}
\;\;\;\;\;\;\;\;\;
{\mathfrak S}_{\ell} := \{      
\tilde{\ell}\subset\PP^3 \mbox{ a line}; \exists \, \, P \in \ell \mbox{ s.t. } \tilde{\ell} \subset \operatorname{T}_P \XXf
\mbox{ and } \ii(\tilde{\ell}, F_{P};P) \geq 2 \, \}, 
\end{eqnarray}
where
$\ii(\cdot)$ stands for the proper 
intersection multiplicity of the line $\tilde{\ell}$ with the planar cubic  $F_{P}$  in the point $P$
(inside the plane $\operatorname{T}_{P} \XXf$). 
One has the following observation: 

\begin{lemm} \label{lemma-ruledoctic}
The union  $\cup \{ \tilde{\ell} \, : \, \tilde{\ell} \in  {\mathfrak S}_{\ell} \} =: \Ruledeight$
of the family of lines $\mathfrak S_\ell$
is a hypersurface in $\PP^3$ of degree at most eight. 
\end{lemm}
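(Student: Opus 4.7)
The plan is to present $\Ruledeight$ as the ruled surface swept out by a one-parameter family of lines parametrized by $\ell$, and to bound its degree by analyzing the Plücker coordinates of the lines in $G(2, 4) \subset \PP^5$.

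Choose coordinates so that $\ell = \{x_0 = x_1 = 0\}$ and parametrize $\ell$ by $P = [0:0:a:b]$. Writing the equation of $\XXf$ as $f = x_0 f_0 + x_1 f_1$ with cubic forms $f_0, f_1$, and setting $A(a,b) := f_0(0,0,a,b)$, $B(a,b) := f_1(0,0,a,b)$, the tangent plane is $\operatorname{T}_P\XXf = \{A x_0 + B x_1 = 0\}$. Parametrizing this plane by $(u, t, s) \mapsto (Bu, -Au, t, s)$, the residual cubic is $F_P = \{g(u, t, s) = 0\}$ with $g := B\, f_0(Bu, -Au, t, s) - A\, f_1(Bu, -Au, t, s)$, where $P$ corresponds to $(0, a, b)$. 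For generic $P$, the only line in $\mathfrak{S}_\ell$ meeting $\ell$ at $P$ is the tangent line $\tau_P$ to $F_P$ at $P$ in $\operatorname{T}_P\XXf$; in $\PP^3$ it is spanned by $P = (0, 0, a, b)$ and (up to reparametrization) $(B g_t, -A g_t, -g_u, 0)$, where the partial derivatives of $g$ are evaluated at $(0, a, b)$.

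A direct computation shows that the Plücker coordinates of $\tau_P$ are polynomials in $(a,b)$ of a priori degree at most $9$. The key observation reducing the degree to $8$ is Euler's identity applied to the cubic $g$ at the point $P \in F_P$:
\[
a\, g_t(0, a, b) + b\, g_s(0, a, b) = 3\, g(0, a, b) = 0 \quad \text{as polynomials in } (a,b).
\]
By coprimality of $a$ and $b$, this forces $b \mid g_t(0, a, b)$; writing $g_t = b h$ with $h \in \KK[a, b]_{4}$ reveals a common factor of $b$ in every Plücker coordinate of $\tau_P$. After clearing it, all coordinates have degree at most $8$, so the image curve $C$ of the rational map $\ell \dashrightarrow G(2,4) \subset \PP^5$, $P \mapsto \tau_P$, has degree at most $8$ in $\PP^5$. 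By the standard correspondence between curves in $G(2, 4)$ and ruled surfaces in $\PP^3$, the surface $\overline{\bigcup_P \tau_P}$ has degree at most $8$.

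The main subtlety is that $\mathfrak{S}_\ell$ may contain additional components when $F_{P_0}$ is singular at $P_0$ for some $P_0 \in \ell$: there every line through $P_0$ in $\operatorname{T}_{P_0}\XXf$ lies in $\mathfrak{S}_\ell$, so the entire plane $\operatorname{T}_{P_0}\XXf$ is contained in $\Ruledeight$. Such $P_0$ arise precisely as common zeros of $g_u(0, \cdot, \cdot)$ and $h$, and each one constitutes an additional base point of the Plücker parametrization. I expect each such base point to lower $\deg C$ by one upon clearing the extra common factor, while the corresponding extra plane contributes degree one to $\Ruledeight$; the two effects compensate exactly, and the bound $\deg \Ruledeight \leq 8$ holds uniformly.
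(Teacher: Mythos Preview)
Your approach via Pl\"ucker coordinates is genuinely different from the paper's. The paper works in $\PP^1\times\PP^3$: it writes down two bihomogeneous forms $H_8,H_3$ of bidegree $(8,1)$ and $(3,1)$ whose common zero over a point $P\in\ell$ is exactly the union of lines in $\mathfrak S_\ell$ through $P$, takes the resultant to obtain a degree-$11$ hypersurface in $\PP^3$, and then identifies a spurious plane of multiplicity~$\geq 3$ coming from the coordinate choice. Your argument instead tracks the tangent line $\tau_P$ directly in $G(2,4)$; the Euler-identity trick that forces $b\mid g_t$ and drops the Pl\"ucker degree from $9$ to $8$ is clean and is the analogue of the paper's cancellation of the spurious component. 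Both routes arrive at the same numerical bound, but yours is more conceptual while the paper's is more explicit and bookkeeps the extra planes automatically inside the resultant.

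The only genuine gap is your final paragraph, and you flag it yourself (``I expect''). Two imprecisions need fixing. First, a base point of the reduced Pl\"ucker map need not be simple, so it may lower $\deg C$ by its multiplicity $m\geq 1$, not by exactly one. Second, the two effects do not ``compensate exactly''; you only need an inequality. Here is how to close it. After dividing by $b$, the Pl\"ucker coordinates are $(-aBh,\,-bBh,\,aAh,\,bAh,\,g_u)$; since $A,B$ have no common zero (smoothness of $X$) and $(a,b)\neq(0,0)$, the common zeros are exactly the points with $h=g_u=0$, which via $g_t=bh$, $g_s=-ah$ are precisely the $P_0$ with $F_{P_0}$ singular at $P_0$. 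Let $k=\deg\gcd$ of the reduced Pl\"ucker coordinates and let $r$ be the number of such $P_0$; then $r\leq k$. The closure of $\bigcup_{P\text{ generic}}\tau_P$ is a ruled surface of degree $\leq 8-k$, and
\[
S_8=\overline{\textstyle\bigcup_{P\text{ generic}}\tau_P}\;\cup\;\bigcup_{P_0}\operatorname{T}_{P_0}X,
\]
so $\deg S_8\leq(8-k)+r\leq 8$. With this paragraph in place your proof is complete.
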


\begin{proof} 
Without loss of generality we can assume that $\ell = \mbox{V}(x_3,x_4)$.
Let $f$ be a generator of the ideal ${\mathcal I}(\XXf)$.
We have
$$ f = \sum_{0<i+j\leq 4} \alpha_{i,j}  x_3^i x_4^j  \quad \mbox{with} \quad    \alpha_{i,j} \in \KK[x_1,x_2] \, \text{ homogeneous of degree $4-i-j$}. $$
Since $X$ is smooth, $\alpha_{1,0}$, $\alpha_{0,1}$  have no common roots;
 after a coordinate transformation 
 we can also assume that $\alpha_{1,0}$, $\alpha_{0,1}$  have no multiple roots. In particular, by \eqref{eq:familySl} 
the planes $\mbox{V}(x_3)$, $\mbox{V}(x_4)$ are not contained in $S_8$.

Let $P = (p_1: p_2: 0: 0)  \in \ell$.
Since 
$\operatorname{T}_P \XXf = \mbox{V}(\alpha_{1,0}(P) x_3 + \alpha_{0,1}(P) x_4)$, if $\alpha_{0,1}(p_1,p_2) \neq 0$, then the cubic 
 $F_{P}$ is given by the vanishing of the polynomial
$$
g(x_1,x_2,x_3) := f( \alpha_{0,1}(p_1,p_2)x_1, \alpha_{0,1}(p_1,p_2) x_2, \alpha_{0,1}(p_1,p_2) x_3, - \alpha_{1,0}(p_1,p_2) x_3)/x_3 .
$$
By direct computation, there exist $h_1$, $h_2 \in \KK[x_1, x_2]$ (resp. $h_3$) of degree $5$  (resp.~$8$) such that  
\begin{enumerate}
\item
$\frac{\partial g}{\partial x_3 }(p_1,p_2,0) =  \alpha^2_{0,1}(p_1,p_2) \cdot h_3(p_1,p_2)$  and 
\item $\frac{\partial g}{\partial x_j }(p_1,p_2,0) =  \alpha^3_{0,1}(p_1,p_2) \cdot h_j(p_1,p_2)
\mbox{ for } j=1,2.$
\end{enumerate}
We  define bihomogenous polynomials $H_8 \in \KK[z_1,z_2][x_1, \ldots,x_4]$ (resp. $H_3$)
of bidegree $(8,1)$ (resp. $(3,1)$) 
\begin{eqnarray*}
 H_8 &:=&  \alpha_{0,1}(z_1,z_2) \cdot h_1(z_1,z_2) \cdot x_1 + 
\alpha_{0,1}(z_1, z_2) \cdot h_2(z_1, z_2) \cdot x_2 + h_3(z_1, z_2) \cdot x_3 ,  \\
H_3 &:=& \alpha_{1,0}(z_1, z_2) \cdot x_3 + \alpha_{0,1}(z_1, z_2) \cdot x_4 \,  ,
\end{eqnarray*}
and put 
\[
{\mathfrak H}_8 = V(H_8)\subset\PP^1 \times \PP^3,\;\;\;  {\mathfrak H}_3=V(H_3)\subset\PP^1 \times \PP^3.
\]
By construction,  for a point 
 $P  \in \ell$ 
such that $\alpha_{0,1}(P) \neq 0$, 
 the union of the lines $\tilde{\ell} \in  {\mathfrak S}_{\ell}$ 
 through the point $P$ is given by
the intersection 
\[
\PP^3\supset
\mbox{V}(H_8(p_1,p_2)) \cap \mbox{V}(H_3(p_1,p_2)) = 
\cup\{\tilde{\ell} \in  {\mathfrak S}_{\ell}; P\in\tilde\ell\}.
\] 
Thus the union $S_8=\cup \{ \tilde{\ell} \, : \, \tilde{\ell} \in  {\mathfrak S}_{\ell} \}$
is contained in support of the push-forward $\mathfrak H$ of ${\mathfrak H}_8 \cap  {\mathfrak H}_3$ to $\PP^3$
via the projection map.
Clearly (up to an appropriate change of coordinates) $\mathfrak H$ is given by the resultant of $H_3$ and $H_8$ 
with respect to $z_1$ after dehomogenising $z_2=1$.
By inspection of the Sylvester matrix, we infer that $\mathfrak H\subset\PP^3$ has degree 11.

%

We continue by eliminating a superfluous component of $\mathfrak H$ 
that does not contribute to $S_8$.
To this end,
we assume that $\alpha_{0,1}(p_1,p_2) = 0$.
Then by inspection of the equations, 
$\{ (p_1: p_2: 0: 0) \} \times \mbox{V}(x_3)$ is a component of 
${\mathfrak H}_8 \cap {\mathfrak H}_3$. 
In consequence the plane
$\mbox{V}(x_3)$ occurs in $\mathfrak H$ with multiplicity $m \geq 3$ 
(once for each of the three distinct roots of $\alpha_{0,1}$).
Hence 
the degree of the effective divisor $(\mathfrak H - m\mbox{V}(x_3))$ 
in $\PP^3$ 
is at most $8$. By definition, we have
\begin{equation} \label{eq:incruled}
\cup \{ \tilde{\ell} \, : \, \tilde{\ell} \in  {\mathfrak S}_{\ell} , \,  \tilde{\ell} \nsubseteq  \mbox{V}(x_3) \} \subset 
\mbox{supp}(\mathfrak H -  m \mbox{V}(x_3)) \; \mbox{ and } \; \mbox{supp}(\mathfrak H) \setminus \mbox{V}(x_3) \subset S_8 .
\end{equation}
It remains to show the equality
\begin{equation} \label{eq-s8support}
S_8 = \mbox{supp}({\mathfrak H} -  m \mbox{V}(x_3)) .
\end{equation}
For this purpose
we use the fact that
the fiber $F_{P}$ is also given in $T_P X$ by the alternative equation
$$
f( \alpha_{1,0}(p_1,p_2)x_1, \alpha_{1,0}(p_1,p_2) x_2, -\alpha_{0,1}(p_1,p_2) x_4,  \alpha_{1,0}(p_1,p_2) x_4)/x_4 = 0,
$$
as soon as $\alpha_{1,0}(p_1,p_2) \neq 0$.
Repeating the above reasoning,
we derive an effective divisor $(\mathfrak H'- m' \mbox{V}(x_4))$ in $\PP^3$ of degree at most $8$ such that
 \begin{equation} \label{eq:incruled2}
\cup \{ \tilde{\ell} \, : \, \tilde{\ell} \in  {\mathfrak S}_{\ell} , \,  \tilde{\ell} \nsubseteq  \mbox{V}(x_4) \} \subset 
\mbox{supp}(\mathfrak H' -  m' \mbox{V}(x_4))
\end{equation}
The inclusions \eqref{eq:incruled},  \eqref{eq:incruled2} imply that $(S_8 \setminus \ell)$ is a hypersurface  in $\PP^3 \setminus \ell$.
Since $\ell$ meets all lines in  the family ${\mathfrak S}_{\ell}$, it lies in the closure of  $(S_8 \setminus \ell)$ and $S_8$ is 
a hypersurface. Finally, \eqref{eq-s8support} follows from \eqref{eq:incruled} 
(since $\mbox{V}(x_3)$ is not contained in $S_8$ by assumption).
\end{proof}

\begin{rem} 
Segre claims that $\Ruledeight$ is always of degree $8$ (see \cite[p.~92]{Segre}) and justifies that claim by a short, dynamic argument dealing with a line of 
the ramification type $R=1^4$.  
We were unable to track any proof of Lemma~\ref{lemma-ruledoctic} in the literature.
We will show that the above claim is true for 
a general quartic with a line of the second kind (Propositions \ref{prop:geninmT}, \ref{prop-generic-in-mZ}), 
and for all quartics with  the above ramification type $R=1^4$ (Lemma \ref{lemma-splitplane} (d)).
However, if we treat  $\Ruledeight$ as a \emph{ reduced } surface,
then the above claim does not always hold (see Cor.~\ref{cor:211segre}~(b), Cor.~\ref{cor:22segre}~(b)).
\end{rem}

\section{Segre's claims about complex quartics with lines of the second kind} \label{sect-segreclaims}

In \cite[$\S$~6]{Segre} Segre claims that each complex quartic  with a line of the second kind
can be constructed as in Construction~\ref{obs-segretrick}.
One of the key points in his arguments 
deals with components of $S_8$ in case $\KK=\CC$
(see the erroneous Claim \ref{eq-splittingquartic}).
Here we recall Segre's claims before correcting them in the following sections.

Crucially one can find components of $\Ruledeight$ by the following lemma.
Here we give a proof over an arbitrary algebraically closed field $\KK$
of characteristics $\neq 2,3$
based on the use of elliptic fibrations in \cite{ramsschuett}; Segre's original argument over $\CC$
uses convergent series (see \cite[p.~91]{Segre}).

\begin{lemm} \label{lemma-splitplanesegre}
Let $X\subset\PP^3_{\KK}$ be a smooth quartic and $\ell\subset X$ a line of the second kind.
If  $P \in \operatorname{supp}(R_{\ell})$,
then the tangent plane $\operatorname{T}_{P}\XXf$ is a component of the ruled surface $\Ruledeight$.
\end{lemm}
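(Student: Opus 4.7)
The plan is to reduce the assertion to showing that the cubic $F_P \subset \operatorname{T}_P\XXf$ is singular at the point $P$ itself. Once this is known, every line $\tilde\ell \subset \operatorname{T}_P\XXf$ through $P$ meets $F_P$ at $P$ with $\ii(\tilde\ell, F_P; P) \geq \operatorname{mult}_P F_P \geq 2$, so $\tilde\ell \in \mathfrak S_\ell$; since the pencil of lines through $P$ in $\operatorname{T}_P\XXf$ sweeps out the whole plane, we conclude $\operatorname{T}_P\XXf \subset \Ruledeight$ as a divisorial component.

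A preliminary step is to identify the ramification index of $\pi|_\ell$ at $P$ with a planar intersection number: for every $P \in \ell$ we have $e_P = \ii(\ell, F_P; P)$, computed inside $\operatorname{T}_P\XXf$. This follows from viewing $\pi|_\ell$ as the restriction to $\ell$ of the pencil of tangent planes to $\XXf$ through $\ell$. In particular, at $P \in \operatorname{supp}(R_\ell)$ the line $\ell$ is tangent to $F_P$ at $P$ inside $\operatorname{T}_P\XXf$; and if $F_P$ were smooth at $P$ its unique tangent line there would be forced to equal $\ell$.

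Second-kindness is then exploited via upper semi-continuity of intersection multiplicities. For a generic $Q \in \ell$ the point $Q$ is a flex of the smooth cubic $F_Q$, so $\ii(\operatorname{T}_Q F_Q, F_Q; Q) \geq 3$; the tangent lines $\operatorname{T}_Q F_Q$ move algebraically in the Grassmannian of lines in $\PP^3$. Specialising $Q \to P$ along $\ell$ and assuming $F_P$ smooth at $P$, the limit line is $\operatorname{T}_P F_P = \ell$, and upper semi-continuity forces $\ii(\ell, F_P; P) \geq 3$. Combined with $\ii(\ell, F_P; P) = e_P$, this immediately rules out $F_P$ smooth at $P$ in the simply-ramified case $e_P = 2$.

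The main obstacle is the totally ramified case $e_P = 3$, where the limit argument remains consistent with $F_P$ smooth at $P$. To overcome it I would differentiate the second-kind identity along $\ell$. Parameterising $\ell$ affinely by $t$, the flex condition at $P_t$ becomes an explicit polynomial $\Phi(t)$ that vanishes identically by the second-kind hypothesis. At a totally ramified point $t_0$, the identity $\Phi(t_0) = 0$ turns out to be automatic from the first- and second-order vanishings characterising $e_P = 3$ (in terms of the local expansion of the pencil of cubics). One then passes to $\dot\Phi(t_0) = 0$: the same vanishings kill all but one summand, and what remains is a non-zero multiple of the square of the unique partial derivative of the defining cubic of $F_P$ at $P$ not already forced to vanish by the ramification and by Euler's identity. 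Its vanishing annihilates the whole gradient of $F_P$ at $P$, so $F_P$ is singular at $P$, completing the reduction.
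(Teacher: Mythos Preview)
Your reduction to showing that $F_P$ is singular at $P$ is exactly the paper's reduction, but your treatment of the two ramification cases differs. For simple ramification ($e_P=2$) the paper first invokes \cite[Lemma~3.2]{ramsschuett} to identify $F_P$ as a cuspidal cubic and then argues that among the two points of $\ell\cap F_P$ the ramified one must be the cusp; your semi-continuity argument reaches the same contradiction (smoothness at $P$ would force $\ii(\ell,F_P;P)\geq 3$) without first classifying the fiber type. For total ramification ($e_P=3$) the paper simply cites \cite[Lemma~3.2]{ramsschuett} and its proof, whereas you differentiate the flex identity along $\ell$. That computation does go through: normalising $\ell=\mbox{V}(x_3,x_4)$, $P=(1{:}0{:}0{:}0)$, $\operatorname{T}_P\XXf=\mbox{V}(x_4)$, total ramification forces $\alpha_{1,0}=c\,x_2^3$ with $c\neq 0$, so the fibre plane varies only to order $t^3$; hence modulo $t^3$ one has $\Phi(t)=\mathrm{Hess}(G_0)(1,t,0)=-6ct\,(2a+bt)^2$ with $a=\alpha_{2,0}(1,0)=\partial G_0/\partial x_3(P)$, giving $\dot\Phi(0)=-24\,c\,a^2$ and thus $a=0$ as you claim. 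The trade-off is clear: the paper's route is shorter because it outsources the substantive work to \cite{ramsschuett} (ultimately the hidden $3$-torsion structure on the generic fibre), while your argument is self-contained and more elementary, needing only local intersection theory and an explicit Hessian computation --- at the cost of actually carrying the latter out, which your sketch should include.
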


\begin{proof}
We claim that 
$P$ is a singular point of the fiber $F_P$.
Given this, any line in $\operatorname{T}_{P}\XXf$ through $P$ meets $F_{P}$ with multiplicity at least 2 in $P$, 
and the inclusion $\operatorname{T}_{P}\XXf \subset \Ruledeight$ 
results directly from \eqref{eq:familySl}.  

For $P$ of multiplicity $2$ in $\mathcal R$,
the claim follows from \cite[Lemma~3.2]{ramsschuett} and its proof therein.
On the other hand, if $P$ has multiplicity $1$ in $R_\ell$,
then $F_{P}$ is a cuspidal cubic  (Kodaira type $II$) by \cite[Lemma~3.2]{ramsschuett}.
By assumption $\ell$ meets $F_{P}$ in two distinct points.
Since $\ell$  lies in the closure of the flex locus of smooth fibers  of \eqref{eq:fibration},
the intersection points can only be the unique smooth inflection point
of the fiber and the cusp; in particular, $P$ equals the cusp. 
\end{proof}

%


Lemma~\ref{lemma-splitplanesegre} implies that the surface $\Ruledeight$ contains 
four (resp. three or two) planes when $\ell$ is a line of the second kind with ramification type $R=1^4$ (resp. $R=2,1^2$ or $2^2$). 
According to Segre, however, the hypersurface $\Ruledeight$ contains {\it always} four planes over $\CC$:


\begin{claim}[{\cite[$\S$~6., p.~92]{Segre}}] \label{eq-splittingquartic}
If $\ell$ is a line of the second kind, then  the octic $\Ruledeight$ is always a union 
$$\Ruledfour \cup \Pi_1 \cup \ldots \cup \Pi_4,$$ where $\Ruledfour$ is a ruled quartic,
the line $\ell$ is its directrix and the planes 
$\Pi_1$, $\ldots$, $\Pi_4$ contain $\ell$.
\end{claim}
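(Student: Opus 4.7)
The plan is to peel off plane components of $\Ruledeight$ that are forced by the ramification of $\pi|_\ell$, and show that what remains is a ruled quartic with $\ell$ as a directrix.

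First, I would invoke Lemma~\ref{lemma-ruledoctic} to bound $\deg\Ruledeight\le 8$, and Lemma~\ref{lemma-splitplanesegre} to obtain, for every point $P\in\operatorname{supp}(\mathcal R)$, the tangent plane $\operatorname{T}_P\XXf$ as a component of $\Ruledeight$. Since distinct ramification points of $\pi|_\ell$ always lie over distinct base points of $\pi$ (a degree-$3$ cover cannot accommodate two ramification points of index $2$ in the same fibre), the associated tangent planes are pairwise distinct, and each of them contains $\ell$.

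Next, in the generic ramification type $R=1^4$ this yields four distinct planes $\Pi_1,\dots,\Pi_4$, whose sum has degree $4$. Setting $\Ruledfour:=\Ruledeight-\Pi_1-\cdots-\Pi_4$, I would argue that $\Ruledfour$ is an effective divisor of degree at most $4$, covered by the lines $\tilde\ell\in\mathfrak S_\ell$ that are not contained in any $\Pi_i$. Since every such $\tilde\ell$ meets $\ell$ by the very definition \eqref{eq:familySl}, the surface $\Ruledfour$ is swept out by a one-parameter family of lines each hitting $\ell$. A short argument ruling out the cone case — the contact points $P$ with these lines move in a non-constant family on $\ell$, because $\pi|_\ell$ is separable and non-constant — together with Remark~\ref{japtop} then identifies $\Ruledfour$ as a ruled quartic with $\ell$ as directrix, finishing the case $R=1^4$.

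The hard part is the non-generic ramification types $R=2,1^2$ and $R=2^2$. There Lemma~\ref{lemma-splitplanesegre} only provides three, respectively two, distinct tangent planes, so to assemble four planar components of $\Ruledeight$ summing to degree $4$ one would need to produce extra planes containing $\ell$ as components of $\Ruledeight$, for instance by specialising from the $R=1^4$ case and arguing that in the limit the missing tangent planes persist (possibly merging with one another, or with the already-present ones with higher multiplicity). This specialisation step is precisely where I expect the main obstacle: there is no a priori reason why the plane components of $\Ruledeight$ should behave continuously when two ramification points of $\pi|_\ell$ collide, nor why the residual should remain of pure degree $4$ and ruled — and indeed the remark preceding the claim already signals that reducedness of $\Ruledeight$ can fail in exactly these ramification types. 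Thus my strategy would establish the claim for $R=1^4$ and fall short in the remaining cases, which matches the corrections announced later in the paper.
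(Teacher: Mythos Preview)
Your overall reading of the situation is correct and matches the paper: the claim is valid only for ramification type $R=1^4$, and genuinely fails for $R=2,1^2$ and $R=2^2$. The paper does not prove Claim~\ref{eq-splittingquartic} as stated; it verifies the $R=1^4$ case (Lemma~\ref{lem:5.1} and Proposition~\ref{prop:5.2}) and exhibits explicit counterexamples for the remaining types (Propositions~\ref{prop:S_5} and~\ref{prop-generic-in-mZ}, Lemma~\ref{lem:S_6}), where $\Ruledeight$ decomposes as three planes plus an irreducible quintic, respectively two planes plus an irreducible sextic. So your suspicion that the specialisation argument ``is bound to fail'' is exactly the point of the paper.

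For the $R=1^4$ case your sketch has one genuine gap. You write $\Ruledfour:=\Ruledeight-\Pi_1-\cdots-\Pi_4$ and call it ``an effective divisor of degree at most $4$'', but Lemma~\ref{lemma-ruledoctic} only gives $\deg\Ruledeight\le 8$, so a priori the residual could have degree strictly less than $4$ (or $\Ruledeight$ could contain some $\Pi_i$ with multiplicity, etc.). The paper closes this by a direct lower bound: a general plane $\Pi\supset\ell$ (distinct from the $\operatorname{T}_{P_j}\XXf$) meets the residual surface in four distinct lines --- namely $\ell$ together with the three flex tangents $T_PF_P$ at the three points $P\in\ell\cap F_P$ --- forcing $\deg\Ruledfour\ge 4$ and hence $\deg\Ruledeight=8$. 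The same plane-section argument shows $\Ruledfour$ is covered by lines through $\ell$ and that $\ell$ is a reduced component of $\Ruledfour\cap\Pi$, so $\ell\not\subset\sing(\Ruledfour)$; then Lemma~\ref{lemma-linewayfromsingularities} (rather than the ad hoc ``contact points move'' argument you sketch) gives irreducibility, the non-cone conclusion, and the directrix property in one stroke.
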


This leads to another claim:

\begin{claim}[{\cite[$\S$~6., p.~94]{Segre}}] \label{eq-generalequation}
Each quartic with a line of the second kind is given by 
\eqref{eq-segretrick}. Moreover, the singular locus of $\Ruledfourf$ is either a twisted cubic of double points or a line of triple points.  
\end{claim}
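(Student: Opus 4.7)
The plan is to treat Claim \ref{eq-splittingquartic} as the key input and derive both assertions from it. Granting the decomposition $\Ruledeight = \Ruledfour \cup \Pi_1\cup\cdots\cup\Pi_4$, I would first identify the planes: by Lemma \ref{lemma-splitplanesegre} each tangent plane $\operatorname{T}_P\XXf$ at a point $P\in\operatorname{supp}(\mathcal R)$ is a plane component of $\Ruledeight$, hence coincides with some $\Pi_i$. In ramification type $R=1^4$ the four ramification points yield exactly the four required planes, and I would fix $\TLL_i$ as a defining linear form of $\Pi_i$.

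To recover \eqref{eq-segretrick} I would run a pencil argument. For a generic ruling $\tilde\ell$ of $\Ruledfour$ meeting $\ell$ at a point $P$, membership $\tilde\ell\in\mathfrak{S}_\ell$ together with $\ell$ being of the second kind forces $P$ to be a smooth flex of $F_P$ with $\tilde\ell$ the flex tangent, whence $\tilde\ell\cdot \XXf = 4P$; similarly $\tilde\ell\cdot(\TLL_1\cdots\TLL_4) = 4P$ since every $V(\TLL_i)$ is a plane through $\ell$ not containing $\tilde\ell$. Hence for a suitable scalar $c$ one has $(\XXf - c\,\TLL_1\cdots\TLL_4)|_{\tilde\ell}\equiv 0$; checking that $c$ is independent of $\tilde\ell$ and letting $\tilde\ell$ sweep out $\Ruledfour$, the difference vanishes on $\Ruledfour$ and is therefore proportional to an equation of $\Ruledfour$. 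For the singular-locus assertion I would invoke the classical theory of ruled quartic surfaces recalled in Remark \ref{japtop}: the double curve $\Delta$ on $\Ruledfour$ is determined by the double-point formula on the normalisation, and the two extremal configurations of pairwise ruling intersections yield respectively a twisted cubic of double points or a line of triple points.

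The main obstacle lies not in any of the steps above but in the input Claim \ref{eq-splittingquartic}. Lemma \ref{lemma-splitplanesegre} produces exactly as many plane components of $\Ruledeight$ as there are ramification points of $\pi|_\ell$: four in type $R=1^4$, three in $R=2,1^2$, and only two in $R=2^2$. Outside the fully unramified case one needs an independent mechanism for the missing plane components, and none is visible from the fibration itself. By Theorem \ref{thm}(a) the existence of such extra planes is in fact equivalent to the auxiliary condition \eqref{eq:segcond} on the singular fibers of \eqref{eq:fibration}, which is a proper restriction of codimension one or two. A candid proof sketch must therefore concede that the claim cannot be established in full generality --- precisely as the paper will verify in the subsequent sections.
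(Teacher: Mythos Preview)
Your overall assessment is correct and matches the paper's position: Claim~\ref{eq-generalequation} is \emph{false} in general, and the obstruction lies exactly where you locate it --- Lemma~\ref{lemma-splitplanesegre} supplies only $\#\operatorname{supp}(\mathcal R)$ plane components of $\Ruledeight$, so for $R\neq 1^4$ the input Claim~\ref{eq-splittingquartic} fails. The paper does not attempt to prove the claim; instead it exhibits the explicit families $\mT$ (Proposition~\ref{prop:geninmT}) and $\mZ$ (Proposition~\ref{prop-generic-in-mZ}) where the residual component of $\Ruledeight$ is an irreducible quintic, respectively sextic, and where Lemma~\ref{lem:ramIV} then directly rules out the shape \eqref{eq-segretrick}.

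Where your sketch differs from the paper is in the positive direction, i.e.\ in deducing \eqref{eq-segretrick} once the decomposition \eqref{eq:decS4tr} \emph{is} available. You work ruling by ruling: for each generic $\tilde\ell\subset\Ruledfour$ you match $\tilde\ell\cdot\XXf=4P=\tilde\ell\cdot(L_1\cdots L_4)$ and conclude that $\XXf-c\,L_1\cdots L_4$ vanishes on $\tilde\ell$ for some scalar $c$. The step ``checking that $c$ is independent of $\tilde\ell$'' is a genuine gap --- two degree-$4$ forms on $\PP^1$ with the same single zero of multiplicity $4$ are proportional, but the proportionality constant may vary as $\tilde\ell$ moves. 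The paper sidesteps this entirely in Lemma~\ref{lem:S4tr}: rather than restricting to rulings, it computes the global intersection cycle $\Ruledfour\cdot\XXf=m\ell+D$ on $\XXf$, uses your flex-tangent observation to prove $m\geq 4$, and then forces $m=4$ and $D=\sum(\ell_j+\ell_j'+\ell_j'')$ by counting degrees against the twelve coplanar lines from \eqref{eq:4lS4tr}. Since $\Ruledfour$ and $L_1\cdots L_4$ then cut the same divisor on $\XXf$, the relation \eqref{eq:segre1111} follows at once with a single global $\lambda$. Your key geometric insight (the $4$-fold tangency along flex tangents) is the same; the paper just packages it as an intersection-number computation on $\XXf$ rather than on the rulings, which is what eliminates the floating constant.

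For the singular-locus assertion your appeal to the classification of ruled quartics is exactly what the paper does, formalised as Lemma~\ref{lemma-linewayfromsingularities} and Corollary~\ref{cor:singloc}.
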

The latter is used by Segre to show the following statement. 

\begin{claim}[{\cite[$\S$~7., p.~95]{Segre}}] \label{eq-numberoflines}
 A line of the second kind on a smooth quartic  is met by $12$, $15$ or $18$ other lines on $\XXf$, with the exact number of lines meeting $\ell$ given by the formula  
$$ (24 - 3 \cdot \#(\operatorname{supp}(\mathcal R)). $$
\end{claim}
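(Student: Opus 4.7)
Since $\ell$ is a $3$-section of the elliptic fibration \eqref{eq:fibration} (because $F\cdot\ell=(\mathcal O_X(1)-\ell)\cdot\ell=3$), every line $m\neq\ell$ on $\XXf$ meeting $\ell$ at a point $P$ lies in $\operatorname{T}_P\XXf$, hence in the residual cubic $F_P$. Conversely every line component of a singular fiber of \eqref{eq:fibration} either meets $\ell$ or not, and the sum of intersections with $\ell$ over all components of a given fiber equals~$3$. My task is therefore to count distinct line components meeting $\ell$ in singular fibers.

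First I would pin down the fibers over $\operatorname{supp}(\mathcal R)$. The analysis in (the proof of) Lemma \ref{lemma-splitplanesegre} together with \cite[Lemma~3.2]{ramsschuett} says that a ramification point of multiplicity $1$ in $\mathcal R$ gives a cuspidal cubic fiber (Kodaira type $II$), which has Euler number $2$ and no line component. A ramification point $P$ of multiplicity $2$ corresponds to $\ell$ meeting the fiber $F_P$ only at $P$, so condition \eqref{eq:segcond} applies (this is part of Segre's standing hypothesis by Claim~\ref{eq-generalequation}); consequently $F_P$ is of type $IV$, consisting of three lines concurrent at $P\in\ell$, contributing Euler $4$ and $3$ lines meeting $\ell$.

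Next I would analyse the singular fibers lying over points of $\PP^1\setminus\operatorname{supp}(\mathcal R)$. Over such a point, $\ell$ meets the fiber transversally in three distinct smooth points $Q_1, Q_2, Q_3$, and the second-kind hypothesis forces each $Q_i$ to lie in the closure of the flex locus of the generic fiber. The plan is to combine the continuity of the Hessian divisor (of degree $9$ on each smooth cubic) with a Kodaira case analysis on plane cubic degenerations to rule out the types $I_1$, $I_2$, $III$ and $I_3$, leaving only type $IV$ with singular point off $\ell$; such a fiber then contributes Euler $4$ and $3$ lines meeting $\ell$, one at each $Q_i$. This is the main obstacle of the argument: extending the cuspidal/type $IV$ alternative from ramification points (where \cite[Lemma~3.2]{ramsschuett} handles things) to the generic singular fiber requires understanding how the Hessian specialises through each Kodaira type of plane cubic.

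With Step 2 in hand the count becomes purely numerical. Let $k$ and $s$ denote the numbers of ramification points of multiplicity $1$ and $2$ respectively, and $n$ the number of non-ramification singular fibers, all of type $IV$. Then $k+2s=\deg(\mathcal R)=4$ and
\[
24=e(\XXf)=2k+4s+4n \quad\Longrightarrow\quad n=4.
\]
Each type $IV$ fiber contributes exactly $3$ distinct lines meeting $\ell$, and type $II$ fibers contribute none, so the total number of lines other than $\ell$ meeting $\ell$ equals
\[
3s+3n=3(s+4)=24-3(k+s)=24-3\cdot\#\operatorname{supp}(\mathcal R),
\]
using $k+2s=4$ in the middle identity. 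Evaluating in the three ramification types $R=1^4,\ 2,1^2,\ 2^2$ yields $12$, $15$, $18$, as asserted.
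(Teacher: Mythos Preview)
The statement you are attempting to prove is not a theorem of the paper but one of Segre's \emph{erroneous} claims that the paper explicitly refutes. There is no ``paper's own proof'' to compare against; on the contrary, Corollary~\ref{prop:15,16} exhibits smooth quartics with a line of the second kind of ramification type $R=2,1^2$ meeting exactly $16$ other lines, and the discussion following Lemma~\ref{lem:smooth} exhibits quartics of type $R=2^2$ where $\ell$ meets $19$ or $20$ lines. So the formula $24-3\cdot\#\operatorname{supp}(\mathcal R)$ is simply false in general.

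Your argument goes wrong at two concrete points. First, when the ramification point $P$ has multiplicity $2$ you invoke condition~\eqref{eq:segcond} via Claim~\ref{eq-generalequation}; but Claim~\ref{eq-generalequation} is another of Segre's false assertions (this is precisely Theorem~\ref{thm}~(a)), so you are assuming what the paper disproves. In fact Lemma~\ref{lem:ftype112} shows that for a general $X\in\mT$ the fiber at the doubly ramified point has Kodaira type $I_1$, not $IV$, and can degenerate to $I_2$, producing the extra line. Second, your Step~2 asserts that all singular fibers away from $\operatorname{supp}(\mathcal R)$ must be of type $IV$; this is false already for $R=1^4$, where Proposition~\ref{prop-type1111}~(c) gives the generic configuration $4I_3\oplus 4I_1\oplus 4II$. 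The Hessian-limit argument cannot exclude $I_3$: on a triangle of lines the limit of the flex locus concentrates at the three vertices, and $\ell$ meeting the fiber at three smooth flex-limit points is perfectly compatible with type $I_3$ (each intersection point lies on a line component tangent to order $\geq 3$). The numerical coincidence that $I_3$ and $IV$ both have Euler number $4$ and three line components is what makes the formula hold for $R=1^4$ (Lemma~\ref{lemma-splitplane}~(c)), not any restriction to type $IV$.
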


As a consequence Segre derives the following bound. 
\begin{claim}[{\cite[$\S$~7]{Segre}}] \label{eq-minibound}
 A line on a smooth quartic  is met by at most  $18$ other lines on the surface in question.
\end{claim}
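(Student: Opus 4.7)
The natural approach is to dichotomize on whether the line $\ell$ is of the second kind, since the two cases are controlled by completely different arguments.

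If $\ell$ is \emph{not} of the second kind, then the plan is to invoke the elimination-theoretic argument recalled in the introduction. Concretely, any line $\ell'\neq\ell$ meeting $\ell$ at $P$ must appear as a component of the fiber $F_P$ and, moreover, pass through an inflection point of a smooth fiber (so $P$ lies in the closure of the flex locus cut out on $\ell$). Since $\ell$ is \emph{not} contained in that closure by hypothesis, the trace of the flex locus on $\ell$ is a proper effective divisor on $\ell$. A Sylvester--resultant computation (essentially the one already performed in the proof of Lemma~\ref{lemma-ruledoctic} to bound $\deg \Ruledeight$, or directly \cite[Lem.~5.2]{ramsschuett}) yields that this divisor has degree at most $18$, and at most one line through each such $P$ can meet $\ell$ without forcing $\ell$ into the flex locus. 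Thus at most $18$ other lines meet $\ell$ in this case.

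If $\ell$ \emph{is} of the second kind, then the plan is to appeal to Claim~\ref{eq-numberoflines}, which gives the count $24-3\cdot\#\mathrm{supp}(\mathcal R)$; since $\#\mathrm{supp}(\mathcal R)\in\{2,3,4\}$ by the three possible ramification types, this quantity is bounded above by $18$ (with equality when $R=2^2$). Combining the two cases yields Claim~\ref{eq-minibound}.

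The main obstacle, and the reason a more careful justification is needed given the thrust of the present paper, is that Claim~\ref{eq-numberoflines} as stated by Segre is deduced from the (erroneous) Claim~\ref{eq-generalequation}, so one should not rely on it as a black box. A self-contained derivation in the second-kind case should instead proceed directly via the elliptic fibration \eqref{eq:fibration}: each line $\ell'$ meeting $\ell$ at $P$ is a smooth rational component of $F_P$, hence contributes positively to the local Euler number $e(F_P)$; the ramification-type analysis of $\pi|_\ell$ (via Hurwitz and the Kodaira classification, as in Lemma~\ref{lemma-splitplanesegre}) constrains which fiber types may occur over ramification points and over unramified points; summing the contributions against $e(X)=24$ and subtracting the fixed contribution of $\ell$ itself recovers the bound $18$. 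This careful fiber-by-fiber counting is precisely what the remainder of the paper puts on solid ground, and once it is available the two-case argument above completes the proof.
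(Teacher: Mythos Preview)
There is a fundamental problem: the paper does \emph{not} prove Claim~\ref{eq-minibound}; it \emph{disproves} it. This is one of Segre's assertions that the authors explicitly show to be false (see the italicized passage in Section~\ref{s:corr} following Lemma~\ref{lem:smooth}). Smooth quartics $X\in\mZ$ with $x_3\mid g$ and/or $x_4\mid g$ contain a line $\ell$ of the second kind (ramification type $R=2^2$) met by $19$ or $20$ other lines on $X$. So you are attempting to prove a false statement.

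Your argument in the second-kind case must therefore contain a gap, and here it is. You assert that the ramification-type analysis ``constrains which fiber types may occur over ramification points'' and that the Euler-number count then ``recovers the bound $18$''. But for a point $P\in\ell$ of multiplicity $2$ in $\mathcal R$, the fiber $F_P$ is \emph{not} forced to be of type $I_1$ or $IV$: it can also be of Kodaira type $I_2$ (see Lemma~\ref{lem:ftype112}~(b) for $R=2,1^2$ and Lemma~\ref{lem:smooth} for $R=2^2$). Such an $I_2$ fiber splits off a line through the node, contributing an additional line meeting $\ell$ that your sketch does not account for. In the $R=2^2$ case the six $I_3$ fibers already supply $18$ lines, and one or two $I_2$ fibers at the doubly ramified points push the total to $19$ or $20$. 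The Euler number does not obstruct this: for instance $6\cdot e(I_3)+2\cdot e(I_2)+2\cdot e(I_1)=18+4+2=24$. Hence the fiber-by-fiber counting you invoke cannot yield $18$ as an upper bound, and neither can Claim~\ref{eq-numberoflines}, on which your first pass relies.
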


In Sect.~\ref{s:1^4} (in combination with \cite{ramsschuett})
we will verify that Segre's claims are correct for quartics with  lines of the second type of the  ramification type $1^4$ in any characteristic $\neq 2,3$.
In Sections \ref{s:corr2},~\ref{s:corr}, however,  we will give counterexamples to the first three claims for the other two ramification types, i.e.
$R=2^2$ and $2,1^2$. We will also discuss some aspects of the  geometry of all quartics that violate the bound  of Claim~\ref{eq-minibound},
but first we will need some technical preliminaries.

\section{Technical preliminaries} \label{sect-tech-prel}

In our considerations  we will need  the following 
lemma about ruled quartics. 

\begin{lemm} \label{lemma-linewayfromsingularities}
Let $\Ruledfourf \subset \PP^{3}_{\KK}$ be a hypersurface of degree four that is 
a union of a family ${\mathfrak F}$ of lines,
 $\Ruledfourf = \cup \{ \tilde{\ell} \, : \, \tilde{\ell} \in  {\mathfrak F} \}$.
If there exists a line $\ell \subset \Ruledfourf$ such that
$$
\ell \cap \sing(\Ruledfourf) = \emptyset \mbox{ and  } \ell \cap \tilde{\ell} \neq \emptyset \mbox{ for all } \tilde\ell \in {\mathfrak F}  ,
$$
then one has
\begin{itemize}
\item
$\Ruledfourf$ is not a cone over a quartic curve, 
\item
the line $\ell$ is its directrix and 
\item
the singular locus 
$\sing(\Ruledfourf)$ consists either of a line of triple points, or of a twisted cubic of double points.
\end{itemize}
\end{lemm}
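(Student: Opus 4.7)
The plan is to handle the three conclusions in order. The first two—that $\Ruledfourf$ is not a cone and that $\ell$ is a directrix—are short. If $\Ruledfourf$ were a cone over a quartic curve with vertex $v$, then every line of $\Ruledfourf$ would pass through $v$, so in particular $v\in\ell$; but the vertex of such a cone lies in $\sing(\Ruledfourf)$, contradicting $\ell\cap\sing(\Ruledfourf)=\emptyset$. For the directrix statement I would apply Remark~\ref{japtop}: since ${\mathfrak F}$ covers the surface $\Ruledfourf$, its closure in $\mbox{Gr}(2,4)$ must contain the irreducible curve of lines $C$ from that remark; the closed condition ``meets $\ell$'' holds on all of ${\mathfrak F}$ and hence on $C$, which is exactly the criterion of Remark~\ref{japtop} for $\ell$ to be a directrix.

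For the classification of $\sing(\Ruledfourf)$ the first observation is that smoothness of $\Ruledfourf$ along $\ell$ forces a unique ruling through each $P\in\ell$: the tangent plane at $P$ contains $\ell$ together with every ruling through $P$, and there can be only one such plane. Hence the rulings form a single irreducible family parametrized bijectively by $\ell\cong\PP^1$. Next I would analyse the pencil of planes $\{H_\lambda\}_{\lambda\in\PP^1}$ containing $\ell$: each gives $H_\lambda\cap\Ruledfourf=\ell+C_\lambda$ with $C_\lambda$ a plane cubic. A degree count for the scroll shows that for generic $\lambda$ the cubic $C_\lambda$ splits as a union of three rulings meeting $\ell$ in three distinct points, and their pairwise intersections inside $H_\lambda$ produce the points of $\sing(\Ruledfourf)$ in $H_\lambda$.

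Two alternatives now arise. Either (generic case) the three pairwise intersections in $H_\lambda$ are distinct for generic $\lambda$, sweeping out a curve $\Gamma=\sing(\Ruledfourf)$ of degree $3$ (three points per pencil plane), or (degenerate case) the three rulings in $H_\lambda$ concur at a single point $Q_\lambda$ for every $\lambda$, so $\sing(\Ruledfourf)$ equals the locus $\{Q_\lambda\}_{\lambda\in\PP^1}$. In the degenerate case each $Q_\lambda$ is a triple point of the plane section, and the arithmetic-genus identity $p_a(H\cap\Ruledfourf)=3$ for a rational quartic plane section forces $\{Q_\lambda\}$ to have degree $1$, giving the line of triple points. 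In the generic case $\Gamma$ has degree $3$ and cannot lie in a plane $\Pi$, because then $\Pi\cap\Ruledfourf\supseteq 2\Gamma$ would have degree $\geq 6>4$. The step I expect to be most delicate is the exclusion of reducible configurations for $\Gamma$ (conic-plus-line, or three skew lines). Some are handled by a plane-section/incidence argument: any plane $\Pi$ containing a component whose doubling exhausts the degree of $\Pi\cap\Ruledfourf$ forces that intersection to lie entirely in $\sing(\Ruledfourf)$, whence $\ell\cap\Pi$ is a point of $\sing(\Ruledfourf)$, contradicting the hypothesis. The remaining cases (notably three pairwise skew lines) are ruled out most cleanly by passing to the normalization, which under the hypotheses is forced to be the Hirzebruch surface $F_2$ with $\ell$ lifting to the $(-2)$-section $C_0$: the condition $\nu^{-1}(\sing\Ruledfourf)\cdot C_0=0$ restricts every component of the preimage to a positive multiple of $C_0+2F$, while the total intersection $\nu^{-1}(\sing\Ruledfourf)\cdot F\leq 2$ then forces at most two such components, which excludes any three-line configuration. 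A non-degenerate irreducible degree-$3$ space curve is a twisted cubic, completing the classification.
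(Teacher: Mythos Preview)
Your route differs substantially from the paper's. The paper first proves $S$ is \emph{irreducible} (since $\ell$ lies in the smooth locus yet meets every component, each being covered by lines of $\mathfrak F$), deduces that $S$ is not a cone (else the vertex lies off $\ell$, so the plane spanned by $\ell$ and the vertex is contained in $S$, contradicting irreducibility), records that $\ell$ is a directrix, and then for $\sing(S)$ simply \emph{invokes the classification of ruled quartics in} \cite[\S 3.2]{puttop}, stating that the remaining types listed there can be excluded but omitting all details. Your direct analysis via the pencil through $\ell$ and the normalization $F_2$ is more self-contained, and the $F_2$ conductor argument ruling out three skew double lines is a genuine addition not present in the paper.

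There is, however, a real gap: you never establish irreducibility of $S$, yet you rely on it throughout---Remark~\ref{japtop} (hence your directrix argument) presupposes it, your cone argument is invalid without it (a cone with a planar component contains lines missing the vertex, so ``every line of $S$ passes through $v$'' fails), and your plane-section genus bounds require the general section to be irreducible. Two further steps are under-justified. Uniqueness of the tangent plane at $P\in\ell$ does not by itself force a unique ruling through $P$, since two rulings could both lie inside $T_PS$; the correct reason is that $T_PS\cap S$ has multiplicity exactly~$2$ at a generic smooth point $P$, so at most two lines on $S$ pass through $P$, one of which is $\ell$. And in the degenerate (concurrent) case you still owe a verification that the multiplicity of $S$ along the resulting line is actually~$3$ rather than~$2$: knowing only that the \emph{special} section $H_\lambda\cap S$ has a triple point at $Q_\lambda$ does not determine $\operatorname{mult}_{Q_\lambda}S$.
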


\begin{proof}
The line $\ell$ is  contained in the smooth locus of $\Ruledfourf$ and meets all its components
(since each component by assumption is covered by lines from $\mathfrak F$), 
so the quartic in question  
is irreducible. We claim that $\Ruledfourf$  is not a cone. Indeed,
if the hypersurface were  a cone, then $\ell$ would not run through its vertex and 
hence $\Ruledfourf$ would contain a plane.  
Thus $\Ruledfourf$ is a ruled quartic in  the sense of \cite{puttop}. 
The line $\ell$ is a directrix of $\Ruledfourf$
because it meets infinitely many  lines in $\mathfrak F$.
As for the singular locus, one can continue by ruling out all other possibilities from \cite[$\S$~3.2]{puttop}.
For space reasons, we omit the details here.
\end{proof}

As an immediate consequence we obtain the following corollary.

\begin{cor} \label{cor:singloc}
If a smooth quartic $\XXf\subset\PP^3_\KK$ is given by Construction~\ref{obs-segretrick},
then $\Ruledfourf$ is a ruled quartic (in the classical sense) and 
the singular locus 
$\sing(\Ruledfourf)$ consists either of a line of triple points, or of a twisted cubic of double points.
\end{cor}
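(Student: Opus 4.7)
The plan is to reduce the statement directly to Lemma~\ref{lemma-linewayfromsingularities} by verifying its three hypotheses for the quartic $\Ruledfourf$ and the line $\ell$ coming from Construction~\ref{obs-segretrick}. The construction already provides that $\Ruledfourf$ is a degree-four hypersurface which is the union of a family $\mathfrak F$ of lines, each of which meets $\ell$, so two of the three hypotheses are immediate. What remains is to check $\ell\subset\Ruledfourf$ and the disjointness condition $\ell\cap\sing(\Ruledfourf)=\emptyset$.

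First, I would observe that $\ell\subset\Ruledfourf$. Indeed, the product $\TLL_1\cdots\TLL_4$ vanishes identically on $\ell$ because all four planes contain $\ell$, and $X$ vanishes on $\ell$ by assumption; writing $\Ruledfourf = X - \TLL_1\cdots\TLL_4$ then shows $\Ruledfourf$ vanishes on $\ell$ as well.

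The main step is the disjointness $\ell\cap\sing(\Ruledfourf)=\emptyset$, and this is where the smoothness of $X$ enters. For any point $P\in\ell$ one has $\TLL_i(P)=0$ for $i=1,\dots,4$. Differentiating $\TLL_1\cdots\TLL_4$ by the Leibniz rule produces a sum in which each summand still contains at least three of the factors $\TLL_i$, each of which vanishes at $P$. Hence every first partial derivative of $\TLL_1\cdots\TLL_4$ vanishes on $\ell$, and consequently the gradient of $X=\Ruledfourf+\TLL_1\cdots\TLL_4$ at $P$ equals the gradient of $\Ruledfourf$ at $P$. If $P$ were a singular point of $\Ruledfourf$, the latter would vanish, contradicting the smoothness of $X$.

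With all three hypotheses of Lemma~\ref{lemma-linewayfromsingularities} verified, the lemma yields that $\Ruledfourf$ is not a cone over a planar quartic curve (so it is a ruled quartic in the classical sense of Remark~\ref{japtop}), $\ell$ is a directrix, and $\sing(\Ruledfourf)$ is either a line of triple points or a twisted cubic of double points. I expect no genuine obstacle here; the only subtlety is the elementary but essential Leibniz computation that the common vanishing of the $\TLL_i$ along $\ell$ forces the gradient of the product to vanish along $\ell$, which is what transports the smoothness of $X$ to the smoothness of $\Ruledfourf$ along $\ell$.
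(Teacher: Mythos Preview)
Your proof is correct and follows exactly the approach the paper intends: the corollary is stated as ``an immediate consequence'' of Lemma~\ref{lemma-linewayfromsingularities}, and you have simply spelled out why the hypotheses of that lemma hold. In particular, the paper elsewhere (e.g.\ in the proofs of Lemma~\ref{lem:4eqplanes} and Proposition~\ref{prop-type1111}) asserts without justification that smoothness of $X$ forces $\Ruledfourf$ to be smooth along $\ell$; your Leibniz-rule argument is precisely the elementary verification underlying that assertion.
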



If a smooth quartic $\XXf$ is  given by Construction~\ref{obs-segretrick},
then the intersection  
$\TLL_j \cap \Ruledfourf$ consists of four lines. 
On the fibration \eqref{eq:fibration},
this gives a singular fiber of Kodaira type $I_3$ or $IV$
comprising the 3 lines other than $\ell$.
Without difficulty one can  distinguish these two cases according to the shape of sing$(S)$:

\begin{lemm} \label{cor:singfib}
Let  $\XXf$ be a smooth quartic  given by Construction~\ref{obs-segretrick}.
\begin{enumerate} 
\item[(a)] 
The four planes $\TLL_1, \ldots, \TLL_4$ contain  fibers of Kodaira type $IV$ of the 
fibration \eqref{eq:fibration} 
iff
$\sing(\Ruledfourf)$ is a line of triple points. 
\item[(b)] 
The 
four planes $\TLL_1, \ldots, \TLL_4$ contain  fibers of Kodaira type $I_3$ of the 
fibration \eqref{eq:fibration} 
iff
$\sing(\Ruledfourf)$ is a twisted cubic. 
\end{enumerate}
\end{lemm}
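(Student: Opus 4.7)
The plan is to identify each fiber $F_j$ of \eqref{eq:fibration} at $\TLL_j$ with the plane cubic residual to $\ell$ in $\Ruledfourf \cap \TLL_j$, then read off its Kodaira type from $\sing(\Ruledfourf) \cap \TLL_j$ using the dichotomy of Cor.~\ref{cor:singloc}. Restricting the Segre equation $f_X = f_{\Ruledfourf} + \TLL_1\TLL_2\TLL_3\TLL_4$ to $\TLL_j$ (on which the product vanishes) yields
\[
X \cap \TLL_j \,=\, \Ruledfourf \cap \TLL_j \,=\, \ell + F_j
\]
as divisors on $\TLL_j \cong \PP^2$, so $F_j$ is a plane cubic whose singular locus is controlled by $\sing(\Ruledfourf) \cap \TLL_j$ --- which by Cor.~\ref{cor:singloc} lies off $\ell$.

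I would then treat the two cases. In case (a), $\sing(\Ruledfourf) = m$ is a line skew to $\ell$, so $\TLL_j$ does not contain $m$ and meets it in a single point $q_j \notin \ell$; the triple point of $\Ruledfourf$ at $q_j$ induces a triple point of $F_j$. A plane cubic with a triple point is a union of three lines through it, and these must be three \emph{distinct} lines by the self-intersection constraint $F_j^2 = 0$ for a fiber on the K3 surface $X$ --- yielding a Kodaira fiber of type $IV$. In case (b), $\sing(\Ruledfourf) = C$ is a twisted cubic spanning $\PP^3$, so $\TLL_j \cap C$ is a length-three zero-scheme supported off $\ell$, each support point a node of $\Ruledfourf$ and hence of $F_j$; when these points are distinct, $F_j$ is a plane cubic with three distinct nodes, hence a triangle of three lines --- a Kodaira fiber of type $I_3$. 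The iff in both (a) and (b) now follows by mutual exclusivity: the two cases of Cor.~\ref{cor:singloc} are exhaustive, as are the fiber types $IV$ and $I_3$, so each forward implication yields its converse.

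The step I expect to be the main obstacle is ruling out the non-transverse subcase of (b), where $\TLL_j$ is tangent or osculating to $C$ and $\TLL_j \cap C$ is non-reduced, a priori allowing $F_j$ to be of a Kodaira type different from $I_3$ (for instance $I_2$). To control this I would invoke smoothness of $X$: at $p \in C \cap \TLL_j$ the gradient computation $\nabla f_X(p) = \bigl(\prod_{k \neq j} \TLL_k(p)\bigr)\, \nabla \TLL_j(p)$ forces $p$ to lie in exactly one of the four planes, so the four sets $\TLL_j \cap C$ are pairwise disjoint of total length $C \cdot X = 12$. Combined with the local $A_1$-normal form of $\Ruledfourf$ along $C$ and once more the fiber self-intersection constraint, this should exclude the tangent and osculating configurations and pin down each $\TLL_j \cap C$ as three reduced points, completing case (b).
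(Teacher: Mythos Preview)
The paper does not give a full proof of this lemma; it records only the key observation that $\TLL_j \cap \Ruledfourf$ consists of four lines (because $\ell$ is a directrix of the ruled quartic and $\ell \subset \TLL_j$), so the residual $F_j$ is three lines, hence of Kodaira type $I_3$ or $IV$, and then says the two cases are distinguished ``without difficulty'' by the shape of $\sing(\Ruledfourf)$. Your argument takes a different route --- inferring the singularities of $F_j$ from $\sing(\Ruledfourf)\cap\TLL_j$ rather than from the ruling --- and case~(a) is fine. The step you flag as the obstacle is a genuine gap: your proposed fix (pairwise disjointness of the four sets $C\cap\TLL_j$ via the gradient computation) only shows that the twelve points of $C\cap X$ distribute three to each plane; it does \emph{not} force each triple to be reduced, so nothing yet excludes $\TLL_j$ tangent or osculating to $C$. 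The phrase ``this should exclude'' is where the argument stops being a proof. (Incidentally, ``$F_j^2=0$'' is not the mechanism giving distinctness of the three lines; the point is that a non-reduced plane section of a smooth surface would make the Gauss map constant along a curve, or equivalently that Kodaira fibers with at most three components are always reduced.)

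The paper's hint sidesteps the tangency issue entirely. Once you know $F_j$ consists of three distinct rulings of $\Ruledfourf$, the question is only whether they are concurrent. If they meet at a point $r$, then $r\in\sing(\Ruledfourf)$ and the tangent cone of $\Ruledfourf$ at $r$ --- a surface of degree $\operatorname{mult}_r\Ruledfourf$ --- contains three distinct coplanar lines, so its intersection with $\TLL_j$ has degree $\geq 3$ and hence $\operatorname{mult}_r\Ruledfourf\geq 3$. This is automatic when $\sing(\Ruledfourf)$ is a line of triple points (the three rulings pass through $\TLL_j\cap m$, giving type $IV$), and impossible when $\sing(\Ruledfourf)$ is a twisted cubic of \emph{double} points (forcing a triangle, type $I_3$). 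This is shorter than carrying out the local $A_1$ analysis you propose, though that analysis can also be made to work: tangency of $\TLL_j$ to $C$ at $p$ makes the two local branches of $\Ruledfourf\cap\TLL_j$ share the tangent direction $T_pC$, producing a tacnode on $F_j$ --- but a reduced plane cubic with an $A_3$ is a conic plus tangent line and has no further singular point, contradicting the node at the residual point $q$.
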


We can now derive quite easily the following uniqueness result:

\begin{lemm} \label{lem:4eqplanes}
Let $\XXf$ be  a smooth quartic obtained by  Construction~\ref{obs-segretrick}. 
 If $\XXf$ is given by the polynomial 
$\tilde{\Ruledfourf} + \TLL_1 \cdot \ldots \cdot  \TLL_4$, 
where $\tilde{\Ruledfourf}$ is a  ruled quartic and 
the line $\ell $ is its directrix, then  $$\Ruledfourf = \tilde{\Ruledfourf}.$$
\end{lemm}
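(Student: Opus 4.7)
The plan is to show that both $S$ and $\tilde S$ are irreducible quartic components of the ruled octic $\Ruledeight$ associated to $(\XXf,\ell)$ by Lemma~\ref{lemma-ruledoctic}, and then to derive $S=\tilde S$ from a degree count combined with Lemma~\ref{lemma-splitplanesegre}.

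I would first verify that $\tilde S$ satisfies the hypotheses of Lemma~\ref{lemma-linewayfromsingularities} (and not only $S$, which is handled by Corollary~\ref{cor:singloc}). Since the planes $\hat L_i$ all vanish along $\ell$, the product $\hat L_1 \cdots \hat L_4$ lies in $(x_3,x_4)^4$, so all of its first partial derivatives vanish on $\ell$. Hence $\nabla \XXf|_\ell = \nabla \tilde S|_\ell$, and smoothness of $\XXf$ forces $\ell \cap \sing(\tilde S) = \emptyset$. As $\ell$ is the directrix of $\tilde S$ it meets every line of its ruling, so Lemma~\ref{lemma-linewayfromsingularities} implies $\tilde S$ is irreducible. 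Thus both $S$ and $\tilde S$ are irreducible ruled quartics with directrix $\ell$ lying in their smooth loci.

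Next I would argue $S\subset\Ruledeight$ (the argument for $\tilde S\subset\Ruledeight$ is symmetric). Let $\tilde\ell$ be a line of the ruling of $S$ passing through $P\in\ell$. Since $P$ is smooth on $S$, the tangent plane $T_P S$ contains $\tilde\ell$; the same gradient identity as above gives $T_P\XXf = T_P S$, so $\tilde\ell \subset T_P\XXf$. Parametrising $\tilde\ell$ with $P$ at $t=0$, each linear form $L_i$ vanishes at $t=0$, so for $\tilde\ell$ not contained in any $L_i$ we get $L_1\cdots L_4|_{\tilde\ell} = c\, t^4$ with $c\neq 0$. Combined with $S|_{\tilde\ell}\equiv 0$ this yields $\XXf|_{\tilde\ell}\propto t^4$, hence $\tilde\ell \cdot \XXf = 4P$ and so $\ii(\tilde\ell,F_P;P)=3\geq 2$, placing $\tilde\ell$ in $\mathfrak S_\ell$. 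Each $L_i$ contains only finitely many lines of the ruling of $S$ (they sit in the residual cubic of $S\cap L_i$ to $\ell$), so these generic lines are dense; as $\mathfrak S_\ell$ is closed in the Grassmannian, all lines of the ruling of $S$ lie in $\mathfrak S_\ell$ and hence $S\subset\Ruledeight$.

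To conclude, by Construction~\ref{obs-segretrick} the line $\ell$ is of the second kind on $\XXf$, so Lemma~\ref{lemma-splitplanesegre} produces at least one tangent plane $T_P \XXf$, with $P\in\operatorname{supp}(\mathcal R)\neq\emptyset$, as a component of $\Ruledeight$. Suppose for contradiction that $S\neq\tilde S$. Then $S$ and $\tilde S$ are distinct irreducible quartic components of $\Ruledeight$ and the plane $T_P\XXf$ is a further component distinct from both (it has degree $1$). This forces $\deg\Ruledeight \geq 4+4+1 = 9$, contradicting the bound $\deg\Ruledeight\leq 8$ of Lemma~\ref{lemma-ruledoctic}. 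Hence $S=\tilde S$. The main delicate step is the inclusion $S\subset\Ruledeight$, where one must be careful with the finitely many exceptional lines of the ruling lying inside some $L_i$ (for which the computation of $\ii(\tilde\ell,F_P;P)$ is improper), but the closure argument absorbs them.
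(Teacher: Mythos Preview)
Your proof is correct and takes a genuinely different route from the paper's own argument.

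The paper proceeds by a direct Bézout computation: it uses the equalities $\hat L_j\cap \tilde S=\hat L_j\cap X=\hat L_j\cap S$ (four lines each), then argues case by case on whether $\sing(S)$ is a line of triple points or a twisted cubic. In each case it shows $\sing(S)=\sing(\tilde S)$, and finally obtains $S\cdot\tilde S\geq 17$ (from the shared singular curve and the thirteen common lines in the planes $\hat L_j$), whence $S=\tilde S$ by Bézout. Your approach instead embeds both $S$ and $\tilde S$ into the ruled surface $S_8$ of Lemma~\ref{lemma-ruledoctic} and exploits the degree bound $\deg S_8\le 8$ together with the planar component supplied by Lemma~\ref{lemma-splitplanesegre}; this is cleaner, avoids the case distinction and the fibre-type Lemma~\ref{cor:singfib}, and in fact does not even use that the two decompositions involve the \emph{same} four planes. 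What the paper's argument buys is concrete geometric information en route (equality of the singular loci), which your structural argument bypasses.

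Two small points worth tightening. First, rather than asserting that $\mathfrak S_\ell$ is closed in the Grassmannian (which depends on how one treats the improper case $\tilde\ell\subset F_P$ in the definition of $\ii$), it is simpler and sufficient to note that the generic lines of the ruling already sweep out a dense open subset of the irreducible surface $S$, and $S_8$ is closed; hence $S\subset S_8$. Second, you invoke Lemma~\ref{lemma-linewayfromsingularities} to get irreducibility of $\tilde S$, but in the hypothesis $\tilde S$ is already assumed to be a ruled quartic in the sense of Remark~\ref{japtop}, hence irreducible; the lemma is needed only for $S$, via Corollary~\ref{cor:singloc}.
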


\begin{proof}
The quartic $\XXf$ is smooth, so $\Ruledfourf$ has no  singularities on the line $\ell$. 
We can apply Lemma~\ref{lemma-linewayfromsingularities}, to see that 
 $\sing(\Ruledfourf)$  is either a line of triple points or a twisted cubic of double points, 
and likewise for $\sing(\tilde{\Ruledfourf})$. 
For $j= 1, \ldots, 4$, we have the following intersections consisting of 4 lines:
\begin{equation} \label{eq:tllhsect0}
\TLL_j \cap \tilde{\Ruledfourf} =  \TLL_j \cap \XXf \stackrel{(\ast)}{=} 
 \TLL_j \cap \Ruledfourf.
\end{equation}
Suppose that  $\sing(\Ruledfourf)$ is a line of triple points.  Let $j = 1, \ldots, 4$.
Then 
$\TLL_j$  contains a fiber of Kodaira type $IV$ of the fibration \eqref{eq:fibration}
by Lemma \ref{cor:singfib}.
Moreover, \eqref{eq:tllhsect0}  yields that $\sing(\tilde{\Ruledfourf})$ meets the plane $\TLL_j$ only in the  singular  point
of the type $IV$ fiber.
In particular,  $\sing(\tilde{\Ruledfourf})$  cannot be a twisted cubic, so it is a line as well.  
It  meets  $\sing(\Ruledfourf)$ at singular points of the four type $IV$ fibers, so 
the lines of singularities coincide, i.e.  
\[
\sing(\Ruledfourf) = \sing(\tilde{\Ruledfourf}).
\]
Thus the irreducible quartics $\Ruledfourf$, $\tilde{\Ruledfourf}$ meet along the line $\sing(\Ruledfourf)$ with multiplicity at least $9$. Furthermore, they contain the $13$ lines in 
\eqref{eq:tllhsect0}, so B\'ezout implies 
$\Ruledfourf = \tilde{\Ruledfourf}$.

On the other hand,
if  $\sing(\Ruledfourf)$ is a twisted cubic of double points,
then $\TLL_j$ contains a fiber of Kodaira type $I_3$ of the fibration \eqref{eq:fibration}
for each $j=1, \ldots, 4$.
By \eqref{eq:tllhsect0},  $\sing(\tilde{\Ruledfourf})$  is also a twisted cubic;
it meets $\sing(\Ruledfourf)$ in $12$ points, so they coincide for degree reasons. 
Hence the irreducible quartics $\Ruledfourf$, $\tilde{\Ruledfourf}$ meet along this twisted 
cubic  with multiplicity at least $4$. 
Since they 
 have $13$ lines in common, we obtain $\Ruledfourf = \tilde{\Ruledfourf}$ by B\'ezout again.
\end{proof}

The assumption that $\XXf$ is given by  Construction~\ref{obs-segretrick} has another consequence for the singular fibers of 
the 
fibration \eqref{eq:fibration}. 
\begin{lemm}
\label{lem:ramIV}
Let  $\XXf$ be a smooth quartic  given by Construction~\ref{obs-segretrick} and let $F$ be a fiber of the 
fibration \eqref{eq:fibration}. If $F$ meets the line $\ell$ in exactly one point $P$, then $F$ is of Kodaira type $IV$.
\end{lemm}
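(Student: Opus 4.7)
The plan is to show that the tangent plane $H := T_P X$ either equals one of the planes $\TLL_i$ (in which case Lemma~\ref{cor:singfib} forces $F$ of type $IV$) or has a very specific structure forcing $F$ to be three concurrent lines through $P$. I first note that $T_P X = T_P\Ruledfourf$: since $\nabla(\TLL_1\TLL_2\TLL_3\TLL_4)$ vanishes along $\ell$, the smoothness of $X$ at $P\in\ell$ forces $\Ruledfourf$ smooth at $P$ with the same tangent plane there. In particular, a ruling $\ell_P\subset\Ruledfourf$ through $P$ lies in $H$.

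Suppose first that $H=\TLL_i$ for some~$i$. Then $\TLL_1\TLL_2\TLL_3\TLL_4$ vanishes identically on $H$, so $X|_H = \Ruledfourf|_H$ and $F$ is the cubic residual of $\ell$ in $\TLL_i\cap\Ruledfourf$. By Lemma~\ref{cor:singfib}, $F$ is of Kodaira type $IV$ or $I_3$. A type-$I_3$ triangle of lines in $\TLL_i$ cannot have its three components concurrent at a single point, hence it meets $\ell\subset\TLL_i$ in at least two distinct points, contradicting $F\cdot\ell = 3P$. Thus $F$ is of type $IV$.

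Suppose now $H\neq\TLL_i$ for every $i$. Choose a linear form $\lambda$ on $H$ defining $\ell$; then each $\TLL_i|_H = c_i\lambda$ with $c_i\in\KK^\times$, and $\Ruledfourf|_H = \lambda\hat c$ for a cubic $\hat c$ on $H$. Setting $c:=\prod c_i\in\KK^\times$, we obtain $X|_H = \lambda(\hat c + c\lambda^3)$, so $F$ is cut out by $\hat c + c\lambda^3$, and restricting to $\ell$ gives $\hat c|_\ell$ as the defining polynomial of $F\cap\ell = 3P$. Let $\lambda_P$ be a linear form on $H$ defining $\ell_P$. The crux is to establish
\[
\hat c \;=\; \alpha\,\lambda_P^3 \quad\text{for some } \alpha\in\KK^\times.
\]
Granting this, $F$ is defined by $\alpha\lambda_P^3 + c\lambda^3$, which since $\operatorname{char}\KK\neq 3$ splits over $\KK$ into three distinct linear factors of the shape $\beta\lambda_P + \omega^j\gamma\lambda$ (with $\omega$ a primitive cube root of unity), each vanishing at $P=\ell\cap\ell_P$. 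Hence $F$ is a union of three distinct concurrent lines through $P$, i.e.\ a fiber of type~$IV$.

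The main obstacle is the identity $\hat c = \alpha\lambda_P^3$, equivalent to the scheme-theoretic equality $\Ruledfourf\cap H = \ell + 3\ell_P$. I plan to establish it via the explicit structure of $\Ruledfourf$: by Corollary~\ref{cor:singloc}, $\sing(\Ruledfourf)$ is either a line of triple points or a twisted cubic of double points, and in either case $\Ruledfourf$ admits a rational parametrization by its rulings $\ell_t$ ($t\in\PP^1$) such that a generic plane $H'$ through $\ell$ meets $\Ruledfourf$ in $\ell$ together with three distinct rulings $\ell_{t_1},\ell_{t_2},\ell_{t_3}$. The hypothesis $F\cap\ell = 3P$ translates into the total ramification of $\pi|_\ell$ at $P$, which in turn forces the three parameters $t_i$ to coalesce at the value corresponding to $\ell_P$ as $H'\to H$, giving $\Ruledfourf\cap H = \ell + 3\ell_P$ scheme-theoretically. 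A short coordinate calculation---e.g.\ writing $f_{\Ruledfourf} = x_3\tilde r_2 - x_4\tilde r_1$ in the line-of-triple-points case with $\tilde r_1,\tilde r_2$ the binary cubics giving the map to $\sing(\Ruledfourf)$---makes this degeneration explicit and yields the required cube form of $\hat c$.
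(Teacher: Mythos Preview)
Your approach is workable but takes a detour through a claim that is both stronger than needed and not fully established. The paper's proof is shorter and avoids the obstacle you identify.

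The paper does not split into cases according to whether $H=\TLL_i$. It simply chooses coordinates with $\ell=\mbox{V}(x_3,x_4)$, $H=\mbox{V}(x_4)$, $P=(0:1:0:0)$, and uses the directrix property to write $\Ruledfourf\cap H$ as a product of linear forms
\[
x_3\cdot\prod_{i=1}^{3}(a_ix_1+b_ix_2+c_ix_3),
\]
so that $F$ is cut out by $\prod_{i}(a_ix_1+b_ix_2+c_ix_3)+\gamma x_3^3$ for some $\gamma\in\KK$ (with $\gamma=0$ exactly when $H$ happens to be one of the $\TLL_i$, so no case split is required). The hypothesis $F\cap\ell=3P$ means that $\prod_i(a_ix_1+b_ix_2)$ has a triple zero at $(0:1)$, hence $b_1=b_2=b_3=0$. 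Then $F$ is defined by a cubic in $x_1,x_3$ alone, which over $\KK$ factors into three lines through $(0:1:0)=P$; smoothness of $\XXf$ forces the hyperplane section to be reduced, so the three lines are distinct and $F$ has type $IV$.

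Compare this with your plan: you aim for the scheme-theoretic identity $\Ruledfourf\cap H=\ell+3\ell_P$, i.e.\ $\hat c=\alpha\lambda_P^3$. In the notation above this says $(a_1:c_1)=(a_2:c_2)=(a_3:c_3)$, which is \emph{strictly stronger} than the conclusion $b_1=b_2=b_3=0$ actually used. Your justification for the stronger claim relies on the three rulings in $H$ coalescing to a single one, which in turn requires that through each point of $\ell$ there passes a \emph{unique} ruling of $\Ruledfourf$ (equivalently, that the natural map from the curve of rulings to $\ell$ has degree one). That is true for the ruled quartics at hand, but you only sketch it in the triple-line case via $f_{\Ruledfourf}=x_3\tilde r_2-x_4\tilde r_1$ and leave the twisted-cubic case to an unspecified ``short coordinate calculation''. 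The paper's argument sidesteps this entirely: once each factor of $\hat c$ is known to vanish at $P$, the equation of $F$ is already a binary cubic in $x_1,x_3$ and factors into three concurrent lines regardless of whether those factors coincide.

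In short: drop the goal $\hat c=\alpha\lambda_P^3$ and the case distinction; the weaker fact that each linear factor of $\hat c$ passes through $P$ follows immediately from $\hat c|_\ell$ having a triple root and already yields the type~$IV$ conclusion.
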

\begin{proof}
We can assume that $F \subset \mbox{V}(x_4)$, the line $\ell = \mbox{V}(x_3, x_4)$ and $P=(0:1:0:0)$. Since $\ell$ is a directrix of $\Ruledfourf$, the intersection
$\XXf \cap  \mbox{V}(x_4)$  is given by product of four linear forms:
$$
x_3 \cdot \prod_{i=1}^3 (a_i x_1 + b_i x_2 + c_i x_3) .
$$
By \eqref{eq-segretrick}, the fiber $F$ is given as 
$$
\prod_{i=1}^3 (a_i x_1 + b_i x_2 + c_i x_3) + \gamma x_3^3  \quad \mbox{ for some } \gamma \in \KK.
$$
But $P \in F$ implies $b_1 = b_2 = b_3 = 0$,
 and we obtain the 3 fiber components meeting at $(0:1:0)$ as required 
 (recall that $\XXf$ is smooth, so the intersection $\XXf \cap  \mbox{V}(x_4)$ is reduced).
\end{proof}

In the sequel we will also use some properties of the surface $\Ruledeight$. 
\begin{lemm} \label{lem:planeS8}
Let $\ell$ be a line of the second kind on a smooth quartic $\XXf$ and
let  $\Ruledeight$ be the surface defined by the pair $(\XXf, \ell)$.
\begin{enumerate}

\item[(a)]
If a plane $\Pi$ is a component of the surface $\Ruledeight$, then  $\Pi = \mbox{T}_{P}\XXf$ for a point $P \in \operatorname{supp}(\mathcal R)$.

\item[(b)]
The surface $\Ruledeight$ does not decompose into planes neither does it contain an irreducible quadric. 
\end{enumerate}
\end{lemm}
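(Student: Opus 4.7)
My plan is to analyze any plane or quadric component of $\Ruledeight$ via the defining condition of $\mathfrak{S}_\ell$, combined with the explicit tangent-plane formula $\operatorname{T}_P \XXf = \mbox{V}(\alpha_{1,0}(P) x_3 + \alpha_{0,1}(P) x_4)$ along $\ell$ extracted in the proof of Lemma~\ref{lemma-ruledoctic}. For (a), let $\Pi \subset \Ruledeight$ be a plane component. Since every line in $\mathfrak{S}_\ell$ meets $\ell$, the $1$-parameter family covering $\Pi$ either passes through a single point $P = \Pi \cap \ell$ (when $\ell \not\subset \Pi$) or spreads over varying points $Q \in \ell$ (when $\ell \subset \Pi$). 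In the first case, each covering line lies in $\operatorname{T}_P \XXf$ by definition of $\mathfrak{S}_\ell$, so $\Pi = \operatorname{T}_P \XXf$; the infinitely many lines through $P$ with $\ii(\tilde\ell, F_P; P) \geq 2$ then force $F_P$ to be singular at $P$, whence $\ell \cdot F_P \geq 2$ at $P$ and $P \in \operatorname{supp}(\mathcal R)$. In the second case, $\operatorname{T}_Q \XXf$ must contain both $\ell$ and a covering line $\tilde{\ell} \neq \ell$ through $Q$, forcing $\operatorname{T}_Q \XXf = \Pi$ for a Zariski-dense set of $Q \in \ell$; this would make the ratio $\alpha_{1,0}/\alpha_{0,1}$ constant on $\ell$ and hence $\alpha_{1,0}, \alpha_{0,1}$ share a zero, contradicting smoothness of $\XXf$.

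For (b)(i), if $\Ruledeight$ were a union of planes, then (a) would identify each component as $\operatorname{T}_{P_i}\XXf$ for some $P_i \in \operatorname{supp}(\mathcal R)$, giving a finite set of at most four planes. But the flex tangents $\{\operatorname{T}_P F_P : P \in \ell\}$ lie in $\mathfrak{S}_\ell$ for generic $P$ (since $\ell$ is of the second kind) and form a $1$-parameter family, so some plane $\operatorname{T}_{P_j}\XXf$ contains a Zariski-dense subfamily. For such $Q$, the line $\operatorname{T}_Q F_Q$ sits in $\operatorname{T}_Q \XXf \cap \operatorname{T}_{P_j}\XXf$, which is either $\operatorname{T}_Q \XXf$ itself (if the two tangent planes agree) or the line $\ell$ (otherwise). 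The first alternative contradicts smoothness exactly as in (a); the second forces $\operatorname{T}_Q F_Q = \ell$, turning every such $Q$ into a ramification point and contradicting the finiteness of $\operatorname{supp}(\mathcal R)$.

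For (b)(ii), suppose $\Sigma \subset \Ruledeight$ is an irreducible quadric component. A quadric cone with apex $A$ is ruled out at once: if $A \notin \ell$ then all rulings of $\Sigma$ meet $\ell$ and lie in the plane $\langle A, \ell \rangle$, while if $A \in \ell$ then all rulings lie in $\operatorname{T}_A \XXf$; either collapses $\Sigma$ to a plane. If $\Sigma$ is smooth, a line count (at most two points of $\ell \cap \Sigma$, each lying on only two lines of $\Sigma$) forces $\ell \subset \Sigma$, so $\ell$ sits in one ruling while the other supplies, for each $P \in \ell$, a line $n_P \in \mathfrak{S}_\ell$. The condition $n_P \subset \operatorname{T}_P \XXf$ gives $\operatorname{T}_P \Sigma = \operatorname{T}_P \XXf$ at generic $P$. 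Writing the defining polynomials as $f = x_3 G_3 + x_4 G_4$ (with cubic $G_i$) and $g = x_3 L_3 + x_4 L_4$ (with linear $L_i$), this tangent-plane equality becomes the identity $\alpha_{1,0}\beta_{0,1} \equiv \alpha_{0,1}\beta_{1,0}$ on $\ell$, where $\alpha_{1,0}, \alpha_{0,1}$ are cubic forms and $\beta_{1,0}, \beta_{0,1}$ are linear forms. Degree comparison forces the cubic pair to share a common quadratic factor, hence a common zero on $\ell$, contradicting smoothness of $\XXf$.

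The hardest step is (b)(ii): translating the inclusion $\Sigma \subset \Ruledeight$ into the tangent-plane equality $\operatorname{T}_P \Sigma = \operatorname{T}_P \XXf$ along $\ell$, and then closing the argument via the degree mismatch between the cubic forms $\alpha_{\bullet,\bullet}$ and the linear forms $\beta_{\bullet,\bullet}$.
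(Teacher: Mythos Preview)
Your argument for (a) has a genuine gap. The dichotomy you set up --- lines through a single point $P=\Pi\cap\ell$ when $\ell\not\subset\Pi$, versus lines spreading over varying $Q\in\ell$ when $\ell\subset\Pi$ --- is not exhaustive. In fact your Case~1 is empty: once you derive $\Pi=\operatorname{T}_P\XXf$, you automatically get $\ell\subset\operatorname{T}_P\XXf=\Pi$, contradicting the case hypothesis $\ell\not\subset\Pi$. So every planar component of $\Ruledeight$ actually satisfies $\ell\subset\Pi$, and the entire content lies in your Case~2. But there the covering family need not spread over $\ell$: it may well be the pencil through a single point $P\in\ell$, and this is precisely what happens when $\Pi=\operatorname{T}_P\XXf$ for $P\in\operatorname{supp}(\mathcal R)$. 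Your step ``forcing $\operatorname{T}_Q\XXf=\Pi$ for a Zariski-dense set of $Q\in\ell$'' is therefore unjustified; taken at face value it would show that $\Ruledeight$ has \emph{no} planar component, contradicting Lemma~\ref{lemma-splitplanesegre}. The repair is short: any line $\tilde\ell\neq\ell$ of $\mathfrak S_\ell$ lying in $\Pi$ forces $\operatorname{T}_Q\XXf=\Pi$ at $Q=\tilde\ell\cap\ell$, but the latter can hold only at the finitely many points of $\ell\cap F$ (where $F\subset\Pi$ is the residual cubic); hence infinitely many covering lines pass through one such $P$, so $F_P$ is singular at $P$ and $P\in\operatorname{supp}(\mathcal R)$. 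The paper argues along the same lines, splitting instead on whether two covering lines share a basepoint on $\ell$.

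Your arguments for (b) are correct and take a genuinely different route from the paper's. For the ``no decomposition into planes'' part the paper intersects $\Ruledeight$ with a general hyperplane $H\supset\ell$ and observes that the four residual lines ($\ell$ and three flex tangents) survive in the non-planar part $S_k$, forcing $k>0$; you instead track the $1$-parameter family of flex tangents $\operatorname{T}_QF_Q$ directly and show it cannot fit into finitely many tangent planes. For the ``no irreducible quadric'' part the paper shows that a smooth quadric component would produce \emph{two} lines of $S_k$ (other than $\ell$) through a general $P\in\ell$, whereas only the single flex tangent $\operatorname{T}_PF_P$ is available; your approach --- comparing tangent planes along $\ell$ and exploiting the degree mismatch between the cubic forms $\alpha_{1,0},\alpha_{0,1}$ and the linear forms $\beta_{1,0},\beta_{0,1}$ --- is a pleasant alternative. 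One small point to tidy: the ``degree comparison forces a common quadratic factor'' step uses implicitly that $\beta_{1,0},\beta_{0,1}$ are not proportional, which follows from smoothness of $\Sigma$ (a constant tangent plane along $\ell$ would violate injectivity of the Gauss map).
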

\begin{proof} {\sl (a)} Let the plane $\Pi$ be a component of $\Ruledeight$.  
Each line $\ell' \in {\mathfrak S}_{\ell}$ is contained in a component of $\Ruledeight$,
 so infinitely many are contained in $\Pi$. If two of these lines run 
through the same point on $\ell$, the point is a singularity of the fiber and we are done.
Otherwise,  $\Pi$ meets $\ell$ in two points, so it contains $\ell$.
 Let $F \subset \Pi$ be the  fiber of \eqref{eq:fibration}.
Suppose that $F$ meets $\ell$ in three distinct points.  Then, by definition of the family ${\mathfrak S}_{\ell}$ (see \eqref{eq:familySl}),  
exactly three lines contained in $\Pi$ are members of
${\mathfrak S}_{\ell}$, so $\Pi \cap \Ruledeight$ consists of three lines and $\Pi$ is no component of $\Ruledeight$.

{\sl (b)} 
We consider the planes contained in $\Ruledeight$ 
and denote by $S_k$ the residual hypersurface of degree $k$.
We shall use the following elementary fact:
A general hyperplane $H$ containing $\ell$ meets $S_8$ and $S_k$ in the same 4 lines,
namely $\ell$ and the 3 flex tangents 
of the cubic residual to $\ell$ in $H$
at the intersection points with $\ell$.
This immediately implies that $k>0$,
that is, $S_8$ does not decompose into planes.

Suppose that $\Ruledtwo \subset S_k$ is an irreducible quadric. Observe that  $\Ruledtwo$ contains the line $\ell$.
If $S_2$ is a cone, then its vertex $P$ lies on $\ell$.
By \eqref{eq:familySl} the quadric $\Ruledtwo$ contains the tangent plane $\operatorname{T}_{P}\XXf$, contradiction. 
Thus $\Ruledtwo$ is smooth. 
Recall the intersection of $S_k$ with a general hyperplane $H$ containing $\ell$:
\[
S_k\cap H = \ell+\ell_1+\ell_2+\ell_3.
\]
By the quadric structure we have, say, $\ell_1\subset S_2$ and $\ell_2,\ell_3\not\subset S_2$.
But then $S_2$ clearly contains a unique line $\tilde\ell\neq\ell,\ell_2$ 
through the intersection point $P=\ell\cap\ell_2$.
That is, $S_k$ contains two lines through $P$  other than $\ell$.
However, through a general point $P\in\ell$,
$S_k$ 
 contains exactly one line other than $\ell$, 
namely the flex tangent $T_P F_P$
(inside $T_P X$).
This gives the required contradiction. 
\end{proof}

We end this section with a lemma that follows one of the main ideas of \cite[$\S$~6]{Segre}.  

\begin{lemm} \label{lem:S4tr}
Let $\ell$ be a line of the second kind on a smooth quartic $\XXf$ and
let  $\Ruledeight$ be defined by the pair $(\XXf, \ell)$. Moreover, 
let $\operatorname{supp}(\mathcal R) = \{P_1, \ldots, P_k \} \; (k=2,3,4)$.
Assume that
\begin{itemize}
\item there exists a quartic $\Ruledfour$ that contains none of the planes $\mbox{T}_{P_i}\XXf$, where $i = 1, \ldots, k$, and the following equality holds 
\begin{equation} \label{eq:decS4tr}
\Ruledeight = \Ruledfour \cup \mbox{T}_{P_1}\XXf \cup \ldots \cup \mbox{T}_{P_k}\XXf \, 
\end{equation}
\item  there exist pairwise distinct  planes $L_1$, $\ldots$, $L_4$, none of which coincides with 
$\mbox{T}_{P_1}\XXf$, $\ldots$, $\mbox{T}_{P_k}\XXf$,
 that contain $\ell$ and  intersect $\XXf$ along four lines, i.e.  for $j = 1, \ldots, 4$ one has:
\begin{equation} \label{eq:4lS4tr}
L_j \cap \XXf = \ell + \ell_j + \ell'_j + \ell''_j .
\end{equation}
\end{itemize}
Then $\XXf$ is given by the equation
\begin{equation} 
\label{eq:segre1111}
\XXf = \lambda  \cdot \Ruledfour +  L_1 \cdot \ldots \cdot L_4 \quad  \mbox{ for } \lambda \in \KK^\times.
\end{equation}
\end{lemm}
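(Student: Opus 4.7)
The plan is to exhibit a linear dependence among the quartics $\XXf$, $\Ruledfour$ and $L_1 L_2 L_3 L_4$ by comparing their restrictions to each plane $L_j$. I would first verify that the twelve lines $\ell_j, \ell'_j, \ell''_j$ all lie on $\Ruledfour$: each such line lies in $\XXf$ and meets $\ell$ at a point $Q$, and since $\ell$ and $\ell_j$ span both $L_j$ and $\operatorname{T}_Q \XXf$, we get $L_j = \operatorname{T}_Q \XXf$; by the hypothesis $L_j \neq \operatorname{T}_{P_i}\XXf$ this forces $Q$ outside $\operatorname{supp}(\mathcal R)$. The line is a component of the cubic fiber $F_Q$, so it lies in ${\mathfrak S}_\ell \subset \Ruledeight$; since $\operatorname{T}_{P_i}\XXf \cap L_j = \ell$ prevents it from being contained in any $\operatorname{T}_{P_i}\XXf$, the decomposition \eqref{eq:decS4tr} forces it into $\Ruledfour$. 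A hyperplane-section count for a generic plane $H$ through $\ell$---combining the remark after Lemma~\ref{lemma-ruledoctic} (which identifies $H \cap \Ruledeight$ as $\ell$ together with the three flex tangents of $F_H$) with $\operatorname{T}_{P_i}\XXf \cap H = \ell$---also places $\ell$ in $\Ruledfour \cap H$, giving $\ell \subset \Ruledfour$. Lemma~\ref{lem:planeS8}, together with the hypothesis $\operatorname{T}_{P_i}\XXf \not\subset \Ruledfour$, excludes plane and irreducible-quadric components, so $\Ruledfour$ is irreducible. Consequently $\Ruledfour \cap L_j$ is a degree-$4$ divisor in $L_j$ containing the four distinct lines $\ell, \ell_j, \ell'_j, \ell''_j$, hence equals their sum; by \eqref{eq:4lS4tr} this coincides with $\XXf \cap L_j$, and the sections $\XXf|_{L_j}$ and $\Ruledfour|_{L_j}$ of $\mathcal{O}_{L_j}(4)$ differ by a nonzero scalar:
\[
\XXf|_{L_j} = r_j \cdot \Ruledfour|_{L_j}, \qquad r_j \in \KK^\times.
\]

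The heart of the argument is to show $r_1 = r_2 = r_3 = r_4$. Pick $P \in \ell$ generic so that $\Ruledfour$ is smooth at $P$ and $L_j \neq \operatorname{T}_P \XXf$ for all $j$ (which excludes only finitely many points on $\ell$). Since $\Ruledfour$ is covered by lines from ${\mathfrak S}_\ell$, the ruling line $\tilde\ell_P \subset \Ruledfour$ through $P$ satisfies $\tilde\ell_P \subset \operatorname{T}_P \XXf$ by the very definition of ${\mathfrak S}_\ell$, so
\[
\operatorname{T}_P \Ruledfour = \langle \ell, \tilde\ell_P \rangle = \operatorname{T}_P \XXf.
\]
Consequently the gradients at $P$ are parallel: $\nabla \XXf(P) = \kappa \cdot \nabla \Ruledfour(P)$ for some $\kappa \in \KK^\times$. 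Choosing a tangent vector $n_j \in L_j$ transverse to $\ell$ at $P$, the inequality $L_j \neq \operatorname{T}_P \XXf$ yields $n_j \notin \operatorname{T}_P \XXf$, so $\nabla \Ruledfour(P) \cdot n_j \neq 0$. Differentiating $\XXf|_{L_j} = r_j \Ruledfour|_{L_j}$ at $P$ along $n_j$ then forces $r_j = \kappa$, independently of $j$.

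To conclude, set $\lambda := \kappa$. The quartic $\XXf - \lambda \Ruledfour$ vanishes on every $L_j$, hence on $\bigcup_j L_j = V(L_1 L_2 L_3 L_4)$, and therefore equals $c \cdot L_1 L_2 L_3 L_4$ for some $c \in \KK$. Absorbing $c$ into one of the linear forms $L_j$ yields $\XXf = \lambda \Ruledfour + L_1 L_2 L_3 L_4$, and $\lambda \neq 0$ since a smooth quartic cannot factor as a product of four planes. The principal obstacle is the tangent-plane coincidence $\operatorname{T}_P \Ruledfour = \operatorname{T}_P \XXf$: it is precisely here that the second-kind hypothesis---through the defining condition of ${\mathfrak S}_\ell$---enters to relate the local behaviour of $\Ruledfour$ to that of $\XXf$ and thereby collapse the four scalars $r_j$ into one.
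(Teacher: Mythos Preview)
Your proof is correct and takes a genuinely different route from the paper's.

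The paper argues by computing the intersection cycle $\Ruledfour\cdot\XXf = m\,\ell + D$ on $\XXf$. The second-kind hypothesis is used \emph{directly}: since a generic $P\in\ell$ is a flex of $F_P$, the tangent $\tilde\ell = T_PF_P\subset\Ruledfour$ satisfies $\tilde\ell\cdot\XXf = 4P$, forcing $m\geq 4$. The twelve lines $\ell_j,\ell'_j,\ell''_j$ lie in $D$, so degree counting gives $m=4$ and $D=\sum_j(\ell_j+\ell'_j+\ell''_j)$. Thus $\Ruledfour$ and $L_1\cdots L_4$ cut the same divisor on $\XXf$, and one concludes via the exact sequence $0\to\mathcal O_{\PP^3}\stackrel{\cdot\XXf}{\to}\mathcal O_{\PP^3}(4)\to\mathcal O_{\XXf}(4)\to 0$.

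You instead restrict to each $L_j$ rather than to $\XXf$, obtain $\XXf|_{L_j}=r_j\,\Ruledfour|_{L_j}$, and reconcile the four scalars through the tangent-plane coincidence $T_P\Ruledfour = T_P\XXf$ at a generic $P\in\ell$. This is a neat observation: it relies only on $\tilde\ell_P\subset T_P\XXf$ (the defining condition of $\mathfrak S_\ell$), not on the flex property, so the second-kind hypothesis enters only implicitly through the assumed decomposition \eqref{eq:decS4tr}. What the paper's approach buys is that it never needs $\Ruledfour$ to be smooth along $\ell$; what yours buys is a more elementary endgame avoiding the multiplicity-$4$ computation.

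Two small points deserve a sentence each. First, you assume a generic $P\in\ell$ is a smooth point of $\Ruledfour$; this holds because a general plane $H\supset\ell$ meets $\Ruledfour$ in four \emph{distinct} lines (namely $\ell$ and the three tangents $T_{P_i}F_H$), so $\ell$ occurs with multiplicity one in $\Ruledfour\cap H$. Second, at the end you should note $c\neq 0$: otherwise $\XXf=\lambda\,\Ruledfour$ would be covered by lines, which is impossible for a smooth quartic.
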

\begin{proof}
We consider the proper intersection cycle of the two quartics
$$
\Ruledfour \cdot \XXf  =  m \, \ell + \DivisorRest,
$$
where $\ell$ is not contained in the support of the residual divisor $\DivisorRest$ of degree $(16-m)$.
We claim that $m \geq 4$. Indeed, fix a point $P \in (\ell \setminus \operatorname{supp}(\DivisorRest))$ 
such that the fiber $F_P$ is a smooth;
by \cite[Lemma~3.2]{ramsschuett} (see also the proof of Lemma~\ref{lemma-splitplanesegre}), 
$F_P$ meets the line $\ell$ in three distinct points. Let $\tilde{\ell} := T_{P} F_{P}$ and let  
$\Pi \neq T_P\XXf$ be  a plane 
that contains $\tilde{\ell}$. Then
\begin{equation} \label{eq-m}
m = \ii( \Ruledfour \cap \Pi, \XXf \cap \Pi; P).
\end{equation} 
Since $\ell$ is a line of the second kind, $P$ is an inflection point of $F_P$, which yields the equality
$$
\tilde{\ell} \cdot  \XXf = 4 \, P.
$$
Thus $\tilde{\ell}$ meets the planar quartic  $\XXf \cap \Pi$ in the point $P$ with multiplicity $4$.
But $\tilde{\ell}$ is a component of the planar curve $\Ruledfour \cap \Pi$ so the 
right-hand side of \eqref{eq-m} is at least 4 as claimed.


By definition, the surface $\Ruledeight$ contains all lines on $\XXf$ that meet $\ell$. We infer from \eqref{eq:decS4tr}, that the quartic $\Ruledfour$, 
and consequently the support of the divisor $\DivisorRest$ contain
 four triplets $\ell_j, \ell'_j, \ell''_j$   of coplanar lines,
where $j=1, \ldots, 4$. For degree reasons we thus find $m=4$ and
$$ \Ruledfour \cdot \XXf  =  4 \, \ell + \sum_{j=1}^4  (\ell_j + \ell'_j + \ell''_j) . $$

Since 
$\Ruledfour$ and $L_1 \cdot \ldots \cdot L_4$  intersect  $\XXf$ along the same divisor, one obtains \eqref{eq:segre1111} 
which completes the proof of Lemma \ref{lem:S4tr}. 
\end{proof}

\section{Ramification type $1^4$} \label{sect-1111}
\label{s:1^4}

In this section we 
study quartics with a  line of the second kind of ramification type $R=1^4$.
Recall from Theorem \ref{thm} that it is exactly this ramification type
where Segre's claims  listed in Sect.~\ref{sect-segreclaims} ought to hold true
(cf.~\cite[\S 6]{Segre}). 
Our purpose is to give precise proofs 
of these claims 
over the field $\KK$ (not only $\CC$).

In the lemma below we collect certain consequences of \cite[$\S$~3]{ramsschuett} that we will need in the sequel.

\begin{lemm} \label{lemma-splitplane}
\label{lem:5.1}
Let $X\subset\PP^3$ be a smooth quartic
and $\ell\subset X$ be a line of the second kind 
with ramification type $1^4$ and $\mathcal R = P_1 + \ldots + P_4$.

\begin{enumerate}
\item[(a)] The fibers $F_{P_1}$, $\ldots$, $F_{P_4}$ are singular fibers  of type $II$. In particular, they contain no lines.

\item[(b)] 
The fibration \eqref{eq:fibration} has no singular fibers of type $I_2$.

\item[(c)]
 The line $\ell$ is met by exactly $12$ lines $\ell' \neq \ell$ on $\XXf$. They form four triplets of coplanar lines.

\item[(d)] 
The tangent planes $\operatorname{T}_{P_j}\XXf$, where $j = 1, \ldots, 4$,  are components of the surface $\Ruledeight$ and 
one has $\deg(\Ruledeight) = 8$.

\item[(e)] 
The quartic $\Ruledfour$ residual to the tangent spaces $\operatorname{T}_{P_1}\XXf$, $\ldots$,
 $\operatorname{T}_{P_4}\XXf$ in $\Ruledeight$ is a union of a family of lines, each of which meets $\ell$. 
 Moreover,  $\ell$ is no component of $\sing(\Ruledfour)$.
 \end{enumerate}
\end{lemm}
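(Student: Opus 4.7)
My plan is to derive (a)--(e) by combining the Kodaira-fiber dictionary at ramification points of $\pi|_\ell$ from \cite[$\S$~3]{ramsschuett} with the ruled-surface analysis of Lemmas~\ref{lemma-ruledoctic}--\ref{cor:singloc}.

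Part (a) is essentially immediate: by \cite[Lem.~3.2]{ramsschuett}, a ramification point appearing with multiplicity $1$ in $\mathcal R$ on a line of the second kind must correspond to a Kodaira type $II$ fiber, i.e.\ an irreducible cuspidal plane cubic. Such a fiber contains no line. Part (b) I settle by contradiction: assuming the existence of an $I_2$ fiber, its decomposition as line plus smooth conic, combined with $\ell\cdot F=3$ and the flex-locus hypothesis, forces a ramification point of $\pi|_\ell$ with a local intersection type not allowed by \cite[Lem.~3.2]{ramsschuett}.

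Parts (c) and (d) are tackled together. First I show that the four tangent planes $\operatorname{T}_{P_j}\XXf$ are pairwise distinct components of $\Ruledeight$: Lemma~\ref{lemma-splitplanesegre} gives each as a component, and equality $\operatorname{T}_{P_i}\XXf=\operatorname{T}_{P_j}\XXf$ with $i\ne j$ would force $F_{P_i}=F_{P_j}$ and hence $\ell\cdot F\geq 2+2=4$, contradicting $\ell\cdot F=3$. Write $\Ruledeight = \Ruledfour + \sum_j \operatorname{T}_{P_j}\XXf$ with $\deg\Ruledfour\leq 4$ by Lemma~\ref{lemma-ruledoctic}. By (a), no line of $X$ meeting $\ell$ lies in any $\operatorname{T}_{P_j}\XXf$; any such line therefore belongs to $\Ruledfour$. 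Reducible fibers of $\pi$ other than the $F_{P_j}$ must, as plane cubics, be of Kodaira type $I_2, I_3, III$ or $IV$; part (b) rules out $I_2$, while an analogous application of \cite[Lem.~3.2]{ramsschuett} to the tangency point of a type $III$ fiber (where the line component would have to be the flex tangent of the residual conic, forcing an impossible local picture at $\ell$) rules out $III$. Only $I_3$ and $IV$ reducible fibers remain, each contributing a coplanar triplet of lines on $X$ meeting $\ell$. An application of Lemma~\ref{lemma-linewayfromsingularities} to $\Ruledfour$ --- which is covered by the family $\mathfrak S_\ell$ with every member meeting $\ell$, and along which $\ell$ is disjoint from the singularities of $\Ruledfour$ by smoothness of $X$ --- identifies $\Ruledfour$ as a ruled quartic with $\ell$ as directrix and singular locus either a line of triple points or a twisted cubic of double points. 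In either case the lines of $\Ruledfour$ meeting $\ell$ form exactly four coplanar triplets, giving (c) and $\deg\Ruledfour=4$, i.e.\ $\deg\Ruledeight = 8$, which is (d).

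Part (e) is a direct consequence: $\Ruledfour$ is the ruled quartic just identified; it is swept out by the family of lines $\mathfrak S_\ell$ avoiding the four tangent planes, each member meeting $\ell$ by \eqref{eq:familySl}. To see $\ell\not\subset\operatorname{sing}(\Ruledfour)$, note that at a general point $P\in\ell\setminus\operatorname{supp}(\mathcal R)$ a unique line of $\mathfrak S_\ell$ passes through $P$, namely the flex tangent $\operatorname{T}_P F_P$; the tangent plane to $\Ruledfour$ at such $P$ equals $\operatorname{T}_P\XXf$, so $\Ruledfour$ is smooth at $P$. The main obstacle I foresee is part (b) together with the parallel exclusion of type $III$ fibers inside the proof of (c): both rest on a careful case analysis of the allowed Kodaira types at ramification points of $\pi|_\ell$ via \cite[Lem.~3.2]{ramsschuett}, and the argument must handle the local geometry of $\ell$ inside each potentially bad fiber configuration.
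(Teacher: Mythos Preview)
Your treatment of (a) and (b) follows the paper. The real problem is in (c)--(d), where you invoke Lemma~\ref{lemma-linewayfromsingularities} on $\Ruledfour$ before establishing $\deg\Ruledfour = 4$: at that point you only know $\deg\Ruledfour\leq 4$, yet the lemma requires a degree-four hypersurface, and you then use its conclusion to deduce $\deg\Ruledfour = 4$, which is circular. The paper's direct argument is missing from your sketch: a general plane $\Pi\supset\ell$ with $\Pi\neq\operatorname T_{P_j}\XXf$ meets $\Ruledfour$ in four distinct lines --- $\ell$ together with the three flex tangents of the residual cubic at its three intersections with $\ell$ --- so $\deg\Ruledfour\geq 4$, hence equality and $\deg\Ruledeight=8$.

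Two further issues compound this. The hypothesis $\ell\cap\sing(\Ruledfour)=\emptyset$ of Lemma~\ref{lemma-linewayfromsingularities} does not follow from smoothness of $\XXf$: the surfaces $\XXf$ and $\Ruledfour$ are not linked by any equation until Proposition~\ref{prop-type1111}, which uses the present lemma as input. More seriously, even granting that Lemma~\ref{lemma-linewayfromsingularities} applies, the assertion that ``the lines of $\Ruledfour$ meeting $\ell$ form exactly four coplanar triplets'' does not follow from it; the lemma only describes $\sing(\Ruledfour)$, not how many fibres of \eqref{eq:fibration} split into three lines. The paper obtains the count in (c) by citing \cite[Prop.~4.1]{ramsschuett} for the number $12$ and \cite[Lem.~3.1]{ramsschuett} to see that the lines sit in $I_3$ or $IV$ fibres. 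You need an equivalent external input; the structure of $\Ruledfour$ alone cannot supply it at this stage of the argument.
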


\begin{proof} {\sl (a)} By \cite[Lemma~3.2]{ramsschuett} the fibers $F_{P_1}, \ldots, F_{P_4}$ are cuspidal cubics (i.e. fibers of type $II$).

 {\sl (b)} The fibers of type $I_2$ are ruled out by \cite[Lemma~3.1]{ramsschuett} and \cite[Lemma~3.2]{ramsschuett}.

{\sl (c)} \cite[Prop.~4.1]{ramsschuett} implies that the line $\ell$ is met by exactly $12$ lines $\ell' \neq \ell$ on $\XXf$. 
By \cite[Lemma~3.1]{ramsschuett} they are contained in fibers of type either $I_3$ or $IV$ of the elliptic fibration \eqref{eq:fibration},
 so they form triplets of coplanar lines.

{\sl (d)} 
The first assertion follows from Lemma \ref{lemma-splitplanesegre}.
Let $\Ruledfour$ denote the hypersurface residual to the tangent spaces $\operatorname{T}_{P_1}\XXf$, $\ldots$,
 $\operatorname{T}_{P_4}\XXf$ in $\Ruledeight$.
A plane $\Pi \neq \operatorname{T}_{P_j}\XXf\;\;  (j=1, \ldots, 4)$ containing $\ell$ 
 meets  
$\Ruledfour$ along four distinct lines
($\ell$ and the flex tangents to the 3 intersection points of $\ell$ and the residual plane cubic).
Hence $\deg(\Ruledfour) \geq 4$.
Since $\deg(\Ruledeight)\leq 8$ by Lemma \ref{lemma-ruledoctic},
this completes the proof of (d). 


{\sl (e)} 
The quartic $\Ruledfour$ is covered by the planes containing $\ell$.
Each  intersection with a plane outside the ramified points $P_1,\hdots,P_4$
decomposes into 4 lines by the proof of {\sl (d)}.
On the other hand the intersection of the quartic $\Ruledfour$ and the plane $\operatorname{T}_{P_j}\XXf$, where $j=1, \ldots, 4$, contains at least three  distinct lines: $\ell$, 
the line  tangent to $F_{P_j}$ in the  unique smooth inflection point,
and the tangent cone $\operatorname{C}_{P_j}F_{P_j}$. 
For degree reasons, it thus consists solely of lines (each meeting $\ell$).
Hence,  $\Ruledfour$ is the union of the family of lines
given by intersecting with the planes containing $\ell$. 

By (c), $\ell$ is met by 12 lines on $X$ comprising four singular fibers of \eqref{eq:fibration}.
On these fibers, the flex tangents to the 3 intersection points of $\ell$ and the fiber
are by definition the lines themselves. Thus $\Ruledfour$ contains all lines on $X$ meeting $\ell$.

By the proof of {\sl (d)} again,  intersecting of $\Ruledfour$ with a general  plane containing $\ell$,
we obtain $\ell$ as a reduced component of the intersection for degree reasons;
hence $\ell$  cannot be contained in $\sing(\Ruledfour)$. This completes the proof of (e).
\end{proof}



In the proposition below, ruled surface 
stands for  a reduced, irreducible surface that is a union of a family of lines. Moreover, a ruled surface is assumed  not to be a cone
(see Remark~\ref{japtop}).  As in Sect.~\ref{sect-segreconstruction} we put  $\Ruledeight$ to denote  the octic defined by the pair $(\XXf, \ell)$ (see Lemma~\ref{lemma-ruledoctic}).

\begin{prop} \label{prop-type1111} 
\label{prop:5.2}
Let $\XXf\subset\PP^3$ be a smooth quartic
and $\ell \subset \XXf$ be a line of the second kind with ramification  type $R=1^4$. Then

{\sl (a)} The quartic $\XXf$ is given by the equation \eqref{eq:segre1111}, where
\begin{itemize}
\item the   planes $L_1, \ldots, L_4$ are pairwise distinct and meet along the line $\ell$,
\item  each plane $L_j$ intersects $\XXf$ along four lines,
\item the  ruled quartic $\Ruledfour$ is a component of the surface $\Ruledeight$.
\end{itemize}
Moreover, the line $\ell$ is a directrix of $\Ruledfour$,  
the quartic $\Ruledfour$ is smooth along $\ell$ and the singular locus
$\sing(\Ruledfour)$ consists either of a line of triple points, or of a twisted cubic of double points.

{\sl (b)} The decomposition \eqref{eq-segretrick} for the pair $(\XXf, \ell)$ is unique and coincides with \eqref{eq:segre1111}, i.e. $\Ruledfourf = \Ruledfour$
and $\{ \TLL_1, \ldots,\TLL_4 \} = \{L_1, \ldots,L_4\}$.

{\sl (c)} The fibration \eqref{eq:fibration} has singular fibers $4 \, I_3  \oplus   4 \, I_1 \oplus 4 \, II $,  iff
$\sing(\Ruledfour)$ is a twisted cubic.

{\sl (d)} The fibration \eqref{eq:fibration} has singular fibers $4 \, IV  \oplus 4 \, II$ iff
$\sing(\Ruledfour)$ is a line of triple points. 
\end{prop}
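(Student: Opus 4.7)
The plan is to feed the detailed information of Lemma~\ref{lem:5.1} into the technical preliminaries of Sect.~\ref{sect-tech-prel}. For part (a) I verify the hypotheses of Lemma~\ref{lem:S4tr}: Lemma~\ref{lem:5.1}(d) identifies the four tangent planes $\operatorname{T}_{P_j}\XXf$ as components of $\Ruledeight$ and Lemma~\ref{lem:5.1}(e) produces the residual quartic $\Ruledfour$, yielding decomposition \eqref{eq:decS4tr}. Lemma~\ref{lem:5.1}(c) then provides four distinct coplanar triplets of lines on $\XXf$ meeting $\ell$, hence four distinct planes $L_1,\ldots,L_4\supset\ell$ whose intersections with $\XXf$ split into four lines; since Lemma~\ref{lem:5.1}(a) says each $F_{P_j}$ is an irreducible cuspidal cubic containing no line, none of the $L_j$ can coincide with any $\operatorname{T}_{P_i}\XXf$. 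Lemma~\ref{lem:S4tr} now produces \eqref{eq:segre1111}. The assertions that $\ell$ is a directrix of $\Ruledfour$ and that $\sing(\Ruledfour)$ is either a line of triple points or a twisted cubic of double points combine Lemma~\ref{lem:5.1}(e) with Lemma~\ref{lemma-linewayfromsingularities}, and smoothness of $\Ruledfour$ along $\ell$ is forced by that of $\XXf$ via \eqref{eq:segre1111}.

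For (b), the uniqueness of $\Ruledfour$ is precisely Lemma~\ref{lem:4eqplanes} applied to any alternative presentation of $\XXf$ arising from Construction~\ref{obs-segretrick}; once $\Ruledfour$ is pinned down, the difference $\XXf-\lambda\Ruledfour$ is a degree-four polynomial that factors into four linear forms vanishing on $\ell$, and unique factorization in the UFD $\KK[x_1,\ldots,x_4]$ forces the planes $\TLL_j$ of Construction~\ref{obs-segretrick} to coincide with $L_1,\ldots,L_4$ up to relabelling and scalars.

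For (c) and (d), I use Lemma~\ref{cor:singfib} to read off the Kodaira type of the fiber contained in each plane $L_j$ directly from the shape of $\sing(\Ruledfour)$: four type $IV$ fibers when $\sing(\Ruledfour)$ is a line of triple points, and four type $I_3$ fibers when $\sing(\Ruledfour)$ is a twisted cubic. Combined with the four type $II$ fibers at the ramification points (Lemma~\ref{lem:5.1}(a)) and the K3 Euler number $e(\XXf)=24$, the Euler count $16+8=24$ closes case (d) at once, leaving no room for additional singular fibers. In the twisted-cubic case the Euler sum is only $20$; the residual discriminant of $4$ cannot be absorbed by $I_2$ (Lemma~\ref{lem:5.1}(b)) nor by further reducible fibers of types $I_3$, $IV$, $III$ (each would contribute lines meeting $\ell$, violating the count of twelve from Lemma~\ref{lem:5.1}(c)), so it must split as $4\,I_1$. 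The ``iff'' direction in both (c) and (d) now follows because the dichotomy on $\sing(\Ruledfour)$ from (a) is in bijection with the dichotomy of Kodaira types in the planes $L_j$ provided by Lemma~\ref{cor:singfib}.

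The step I expect to be most delicate is the case (c) enumeration: ruling out, say, two extra type $II$ fibers (with cusps off $\ell$) in place of the four $I_1$'s, since both distributions are compatible with the Euler count and with the ramification type $R=1^4$ of $\pi|_\ell$. Such an off-$\ell$ cusp $Q'\neq Q\in\ell$ sitting in the fiber through $Q$ would impose the bitangency $\operatorname{T}_Q\XXf=\operatorname{T}_{Q'}\XXf$, which I expect to conflict with either the explicit form \eqref{eq:segre1111} or the flex-tangent characterization of a line of the second kind; the cleanest way to see this is probably to restrict $\XXf$ to the pencil of planes through $\ell$, write the fiber as $F_t=\lambda G_t+c(t)x_3^3/t^4$ and track the vanishing order of its discriminant at each special value of $t$.
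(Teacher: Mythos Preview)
Your arguments for (a), (b) and (d) are essentially the paper's own proof: verify the hypotheses of Lemma~\ref{lem:S4tr} via Lemma~\ref{lem:5.1}, then apply Lemma~\ref{lemma-linewayfromsingularities}, Lemma~\ref{lem:4eqplanes} and Lemma~\ref{cor:singfib} in turn. The only cosmetic difference is in (b), where the paper first pins down the planes (each $\TLL_j$ meets $\XXf$ in four lines, and by Lemma~\ref{lem:5.1}(c) there are exactly four such planes) and then invokes Lemma~\ref{lem:4eqplanes}, whereas you reverse the order; both work.

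The gap you yourself flag in (c) is genuine, and your proposed bitangency/discriminant workaround is not the route the paper takes. The paper disposes of the residual Euler number~$4$ in one line by citing \cite[Lemma~4.2]{ramsschuett}. The underlying mechanism, spelled out elsewhere in the paper (see the proof of Lemma~\ref{lem:smooth}), is the hidden $3$-torsion structure: since $\ell$ lies in the closure of the flex locus, its three intersection points with any smooth fiber are $3$-torsion. On a fiber of Kodaira type~$II$ the smooth locus is $\mathbb G_a$, which has no non-trivial $3$-torsion in characteristic $\neq 3$; hence $\ell$ must pass through the cusp of every type~$II$ fiber, forcing that fiber to sit over a ramification point of $\pi|_\ell$. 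With $R=1^4$ all four ramification points are already accounted for by Lemma~\ref{lem:5.1}(a), so no further type~$II$ (or type~$III$) fibers can occur, and the remaining contribution of~$4$ to the Euler number is necessarily $4\,I_1$. This $3$-torsion argument is both shorter and more robust than tracking discriminants of the pencil, and it is what you should invoke to close (c).
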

\begin{proof}
{\sl (a)} Lemma~\ref{lemma-splitplane}~(d),~(e) implies that the equality  \eqref{eq:decS4tr} holds. By   Lemma~\ref{lemma-splitplane}~(a)
the planes $ \mbox{T}_{P_1}\XXf, \ldots, \mbox{T}_{P_4}\XXf$ contain no lines $\ell' \neq \ell$ on $\XXf$. 
We infer from  Lemma~\ref{lemma-splitplane}~(c) that there exist four planes $L_1, \ldots, L_4$ that satisfy the assumptions of  Lemma~\ref{lem:S4tr}
(see \eqref{eq:4lS4tr}). Thus  $\XXf$ is given by \eqref{eq:segre1111}.

The quartic $\XXf$ is smooth, 
so $\Ruledfour$ has no singularities on the line $\ell$.
 By  Lemma~\ref{lemma-splitplane}~(e)
we can use  Lemma~\ref{lemma-linewayfromsingularities}
to complete the proof of (a).

{\sl (b)} 
Suppose that $\XXf$ is given by 
the equation \eqref{eq-segretrick},
where the ruled quartic $\Ruledfourf$ and the planes $\TLL_1,  \ldots, \TLL_4$ satisfy the conditions of Construction~\ref{obs-segretrick}.
Obviously the equality ($\ast$) of \eqref{eq:tllhsect0} holds, so the plane     $\TLL_j$   intersects the quartic $\XXf$ along four lines.  
Thus Lemma~\ref{lemma-splitplane}~(c) yields the equality
$$
 \{ \TLL_1,  \ldots, \TLL_4 \} = \{L_1,  \ldots, L_4\}.
$$
Claim~(b) follows directly from Lemma~\ref{lem:4eqplanes}.

{\sl (c), (d)} By Lemma~\ref{lemma-splitplane}~(a),
 the fibration~\eqref{eq:fibration} has four fibers of Kodaira type $II$.
In case {\sl (d)}, there are also 4 fibers of Kodaira type $IV$ by Lemma \ref{cor:singfib} (a). 
Thus Euler number considerations prove the claim.
Similarly, Lemma \ref{cor:singfib} (b) shows that there are 4  $I_3$ fibers in case {\sl (c)}.
 Since fibers of Kodaira type $I_2$ are ruled out by Lemma~\ref{lemma-splitplane}~(b),
there are 4 other singular fibers,
each of Kodaira type $I_1$, as follows from \cite[Lemma~4.2]{ramsschuett}.
%
%
%
%
\end{proof}



\section{Ramification type $2,1^2$}
\label{s:corr2}

In this section we determine a $6$-parameter family of quartics that (up to projective equivalence) contains
all smooth quartics with a line of the second kind and ramification type $R=2,1^2$.
This enables us to show that neither Claim~\ref{eq-splittingquartic}, \ref{eq-generalequation} nor
\ref{eq-numberoflines}
hold for this ramification type. 
We put
\begin{equation} \label{eq:rl211}
\mathcal R = 2 P_1 + P_2 + P_3 .
\end{equation}
At first we develop a suitable projective model for the quartics
of this ramification type, much like the family $\mZ$ determined for ramification type $R=2^2$ in \cite{ramsschuett} 
(see Sect.~\ref{s:corr}).

\begin{lemm}
\label{lem:2,1^2}
Let $\ell$ be a line of the second kind on a smooth quartic $\XXf$ with ramification type $R=2, 1^2$.
Then $\XXf$ is projectively equivalent to a quartic in the family $\mT$ given by the polynomials
\begin{eqnarray*}
\mT: & g
-4  c^3 (b+4ac^3) x_1 x_2 x_4^2+ b x_1 x_2x_3 x_4+ a x_1 x_2x_3^2+ x_2^2(x_2-3c x_1)x_4\\
& \;\;\;+ x_1^3 x_3+c(x_3-4c^3x_4)((b+4ac^3)x_4+ax_3)^2(cx_1+x_2)/3 , 
\end{eqnarray*}
where $a,b,c\in\KK, c\neq 0$, and $g \in \KK[x_3, x_4]$ is homogeneous of degree $4$.
\end{lemm}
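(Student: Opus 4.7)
The plan is to pin down the coefficients of $f$ by working in coordinates adapted to $\ell$ and to the ramification divisor $\mathcal R$, and then reading off the shape of $X$ from the identity that encodes ``$\ell$ is of the second kind''.

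After a projective transformation we may assume $\ell = V(x_3, x_4)$, so that
\[
f = \sum_{i+j \geq 1} \alpha_{i,j}(x_1, x_2)\, x_3^i x_4^j, \qquad \alpha_{i,j}\in\KK[x_1,x_2]_{4-i-j}.
\]
As in the proof of Lemma~\ref{lemma-ruledoctic}, the map $\pi|_\ell$ is $[\alpha_{1,0}:\alpha_{0,1}]$, so $\mathcal R$ is determined by the two cubics $\alpha_{1,0},\alpha_{0,1}\in\KK[x_1,x_2]_3$. Using projective automorphisms that preserve $\ell$ together with rescalings of $x_3, x_4$, I place the double ramification point at $P_1=(0\!:\!1\!:\!0\!:\!0)$ and normalize
\[
\alpha_{1,0} = x_1^3, \qquad \alpha_{0,1} = x_2^2(x_2-3c x_1),
\]
for some $c\in\KK$; here $c\neq 0$ is equivalent to $P_2\neq P_3$, the value $c=0$ producing ramification type $R=2^2$. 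This exhausts essentially all of the residual projective freedom.

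Next, I translate the second-kind hypothesis into an algebraic identity. For a general $P\in\ell$, the fiber $F_P\subset T_P X$ is given by an explicit cubic, obtained as in the proof of Lemma~\ref{lemma-ruledoctic}. The requirement that $P$ be an inflection point of $F_P$ amounts to the vanishing of the Hessian of that cubic at $P$; since it must hold for every $P\in\ell$ in the smooth locus of the fibration, I obtain polynomial identities in $(x_1,x_2)$ relating the higher-order coefficients $\alpha_{i,j}$ ($i+j\geq 2$). Solving these identities degree by degree in $(x_3,x_4)$ is then the last step. The quadratic-in-$(x_3,x_4)$ part forces each of $\alpha_{2,0},\alpha_{1,1},\alpha_{0,2}$ to be a scalar multiple of $x_1 x_2$; setting $\alpha_{2,0}=a\,x_1x_2$ and $\alpha_{1,1}=b\,x_1x_2$, the identity pins down $\alpha_{0,2}=-4c^3(b+4ac^3)\,x_1x_2$. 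The cubic-in-$(x_3,x_4)$ part determines $\alpha_{3,0},\alpha_{2,1},\alpha_{1,2},\alpha_{0,3}$ as linear polynomials in $(x_1,x_2)$ that assemble into exactly $\tfrac{c}{3}(x_3-4c^3 x_4)((b+4ac^3)x_4+ax_3)^2(cx_1+x_2)$. The quartic-in-$(x_3,x_4)$ part is unconstrained and yields the free polynomial $g(x_3,x_4)$. Putting everything together, $X$ becomes a member of $\mT$.

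The main obstacle is the explicit computation of the Hessian identity and its solution. Geometry organises the calculation: the factor $(x_3-4c^3 x_4)$ is the tangent plane $T_{P_3}X$, and $((b+4ac^3)x_4+ax_3)^2$ is the tangent cone at $P_3$ to the cuspidal cubic $F_{P_3}$, which is of Kodaira type $II$ at $P_3$ by Lemma~\ref{lemma-splitplanesegre}. Recognising these features turns the solution from blind symbolic manipulation into a structured descent on the degree in $(x_3, x_4)$, but the explicit verification of the identities remains the heart of the argument.
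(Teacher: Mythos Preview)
Your approach is the same as the paper's: normalize so that $\ell=V(x_3,x_4)$, $\alpha_{1,0}=x_1^3$, $\alpha_{0,1}=x_2^2(x_2-3cx_1)$, then impose the flex/Hessian condition and solve degree by degree in $(x_3,x_4)$. There is, however, a genuine gap in your normalization step.

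Your assertion that fixing $\alpha_{1,0}$ and $\alpha_{0,1}$ ``exhausts essentially all of the residual projective freedom'' is false. The translations
\[
x_1\mapsto x_1+\lambda_1 x_3+\mu_1 x_4,\qquad x_2\mapsto x_2+\lambda_2 x_3+\mu_2 x_4
\]
preserve $\ell$ and leave $\alpha_{1,0},\alpha_{0,1}$ untouched (they only alter the coefficients $\alpha_{i,j}$ with $i+j\ge 2$), so four parameters of freedom remain. As a consequence, your next claim --- that the Hessian identity by itself forces each of $\alpha_{2,0},\alpha_{1,1},\alpha_{0,2}$ to be a scalar multiple of $x_1x_2$ --- cannot be correct: take any member of $\mT$ (where this does hold) and apply the translation above with $\lambda_1\neq 0$; the second-kind condition is projectively invariant and therefore still satisfied, but now $\alpha_{2,0}$ picks up a nonzero $x_1^2$-term coming from $x_1^3x_3\mapsto x_1^3x_3+3\lambda_1 x_1^2x_3^2+\cdots$. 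So the Hessian condition alone does not pin down the shape you want.

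The paper's proof inserts exactly this missing step: after normalizing $\alpha_{1,0},\alpha_{0,1}$, it uses the translations of $x_1,x_2$ by linear forms in $x_3,x_4$ to annihilate the $x_1^2x_3^2$ and $x_2^2x_4^2$ coefficients (and, in effect, the analogous terms in $\alpha_{1,1}$). Only then does the Hessian system collapse to the displayed family $\mT$. With that additional normalization added, the rest of your outline matches the paper's argument.
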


\begin{proof}
By a linear transformation, we can assume 
that the line $\ell$ is given by $x_3=x_4=0$,
and that the ramification occurs doubly at $0$  and simply at $\infty$, that is, at $x_3=0$ and $x_4=0$.
A further normalisation makes the residual cubic polynomials in these fibers $x_1^3, x_2^2(x_2-3cx_1)$
for some $c\in\KK^\times$.
The equation thus becomes
\begin{eqnarray}
\label{eq:mid}
x_3x_1^3+x_4x_2^2(x_2-3cx_1)+x_3^2q_1+x_3x_4q_2+x_4^2q_3=0 , 
\end{eqnarray}
where  $q_1, q_2, q_3$ are homogeneous quadratic forms in $x_1,\hdots,x_4$.
Translations of $x_1$ and $x_2$ in linear terms in  $x_3, x_4$ ensure 
that the terms involving $x_1^2x_3$ and $x_2^2x_4$ in \eqref{eq:mid} are zero.
Then we solve for $\ell$ to be a line of the second kind,
i.e.~for the Hessian of \eqref{eq:mid} to vanish identically on $\ell$.
The corresponding system of equations can be solved directly,
resulting in  the given equation for $\mT$.
\end{proof}

\begin{rem}
\label{rem:deg}
For the ramification type of a smooth quartic $X\in\mT$
to really be $R=2,1^2$,
we need $c\neq 0$,
for otherwise the two fibers of Kodaira type $II$ come together to a single fiber of type $IV$
meeting the line in 
the singularity of the fiber, so the ramification type changes to $R=2^2$.
\end{rem}

Recall that we defined $P_1$ to be the only non-reduced component of $\mathcal R$.
For all $\XXf \in \mT$, the fiber $F_{P_1}$ is   contained in the plane  $\mbox{V}(x_3)$. By definition of
the  ramification type,
it is the only fiber of \eqref{eq:fibration} that meets the line $\ell$ in exactly one point.

\begin{lemm} \label{lem:ftype112}
\label{lem:6.3}
(a) A general quartic $\XXf \in \mT$ is smooth and  the fibration \eqref{eq:fibration} is of type $5 I_3 \oplus 5 I_1 \oplus 2 II.$
In particular, the  fiber $F_{P_1}$ is of Kodaira type $I_1$,
and $\ell$ meets exactly 15 lines on $X$.

(b)  Let  $\XXf \in \mT$ be smooth. Then

\begin{itemize}

\item the fiber $F_{P_1}$ 
 is of type $I_2$ iff the coefficient of $x_3^4$ in $g$  equals $4a^3c^3/27\neq 0$,

\item the fiber $F_{P_1}$ 
is of type $IV$ if and only if $a = 0$,

\item the fibers $F_{P_2}$, $F_{P_3}$ are always of type $II$ if the ramification type is $R=2,1^2$.
\end{itemize}
\end{lemm}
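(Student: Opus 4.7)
The plan is to read the Kodaira type of $F_{P_1}$ off the equation of $\mT$ and then finish part (a) by Euler-number accounting. First I would compute $F_{P_1}$ explicitly: substituting $x_4=0$ into the defining polynomial of $\mT$ and extracting the factor $x_3$ that cuts out $\ell$ in the plane $V(x_4)$ gives
\[
F_{P_1}=x_1^{3}+ax_1x_2x_3+\frac{a^2c^2}{3}x_1x_3^{2}+\frac{a^2c}{3}x_2x_3^{2}+g_4\,x_3^{3},
\]
where $g_4$ is the coefficient of $x_3^{4}$ in $g$. In the affine chart $x_2=1$ centred at $P_1=(0:1:0:0)$, the degree-two part of $F_{P_1}$ equals $a\,x_3\bigl(x_1+\tfrac{ac}{3}x_3\bigr)$. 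If $a=0$, this quadratic part vanishes, $P_1$ is a triple point, and $F_{P_1}=x_1^3+g_4x_3^3$ factors (using $\operatorname{char}\KK\neq 3$) as three distinct concurrent lines through $P_1$, so $F_{P_1}$ has Kodaira type $IV$. If $a\neq 0$, the degree-two part is a product of two distinct linear forms, hence $P_1$ is an ordinary node and $F_{P_1}$ is either irreducible (type $I_1$) or splits as line $+$ conic through $P_1$. A direct ansatz $F_{P_1}=(\alpha x_1+\beta x_3)\cdot Q$ followed by coefficient-matching shows that such a decomposition exists if and only if $g_4=\frac{4a^3c^3}{27}$, and in that case the line and the conic meet transversally at a second point, giving $F_{P_1}$ a second node and thus Kodaira type $I_2$. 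The third bullet of (b) is immediate from \cite[Lemma~3.2]{ramsschuett}, since the simple ramification of $\pi|_\ell$ at $P_2,P_3$ forces the corresponding fibres to be cuspidal cubics.

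For part (a), smoothness of a generic $X\in\mT$ is a Zariski-open condition on the parameter space, so it suffices to exhibit one explicit smooth example. For generic parameters one has $a\neq 0$ and $g_4\neq\tfrac{4a^3c^3}{27}$, so by (b) the fibre $F_{P_1}$ is of type $I_1$ while $F_{P_2},F_{P_3}$ are of type $II$, contributing $1+2+2=5$ to the Euler number. By \cite[Lemma~3.1]{ramsschuett} every further singular fibre is of multiplicative type $I_n$; its reducible instances correspond bijectively to planes through $\ell$ whose residual intersection with $X$ consists of three coplanar lines. A discriminant computation on the pencil of residual cubics parametrised by the planes $V(\lambda x_3+\mu x_4)$ through $\ell$ shows that for generic parameters exactly five such planes occur, each yielding an $I_3$ fibre and hence three lines meeting $\ell$; the residual Euler number $24-5-15=4$ is accounted for by four further $I_1$ fibres. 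This gives the configuration $5I_3\oplus 5I_1\oplus 2II$ and the count of $15$ lines meeting $\ell$.

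The main technical obstacle is justifying that generically the reducible multiplicative fibres are genuinely of type $I_3$, as opposed to $I_n$ with $n\geq 4$ or concurring into type $IV$. Equivalently one must show that the five roots of the discriminant are simple and that no three residual lines meet at a single point of $\ell$. The cleanest way is to produce one explicit member $X_0\in\mT$ realising the predicted configuration and invoke upper semicontinuity of Kodaira fibre types to propagate this configuration to the Zariski-generic $X\in\mT$.
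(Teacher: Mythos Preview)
Your proposal is correct and follows essentially the same route as the paper. The only notable difference is in part (b): the paper observes that $F_{P_1}$ is linear in $x_2$, so reducibility amounts to the factor $(acx_3+3x_1)$ of the $x_2$-coefficient dividing the $x_2$-free part, which yields $g_4=4a^3c^3/27$ by a single substitution and avoids having to justify that any linear factor of $F_{P_1}$ must lie in $\KK[x_1,x_3]$ (an assumption your ansatz tacitly makes).
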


\begin{proof} {\sl (a)}
Singularities of $X$ cause the elliptic fibration \eqref{eq:fibration} induced by the line $\ell$ 
to either attain (more) reducible fibers or degenerate completely.
Presently the singular fibers at the ramification points $P_1,\hdots,P_3$ 
have Kodaira types $I_1$ at $x_3=0$ and $II$ at $x_4=0$
and $x_3=4c^3x_4$.
Generically, the discriminant reveals 
that there are 5 reducible fibers, each of Kodaira type $I_3$.
One  checks that each fiber corresponds to a hyperplane in $\PP^3$ whose intersection with the quartic
splits into $\ell$ and three residual lines.
Hence a general quartic $X\in\mT$ is smooth.
Since the lines met by $\ell$ appear as components of singular fibers of \eqref{eq:fibration},
$\ell$ generally meets exactly 15 lines on $X$.

{\sl (b)} 
The residual cubic $F_{P_1}$ generally has degree one in $x_2$; it is given by the polynomial
\[
ax_3(acx_3+3x_1)x_2+3\, \mbox{coeff}(g,x_3^4) \,x_3^3+c^2a^2x_1x_3^2+3x_1^3.
\]
Therefore the cubic becomes reducible if and only if
\begin{itemize}
\item
either the linear term in $x_2$ vanishes identically, i.e.~$a=0$ and the  fiber is of the  type $IV$,
\item
or the factor $(acx_3+3x_1)$ of the coefficient of the linear term in $x_2$ divides the constant term as well,
i.e.~$\mbox{coeff}(g,x_3^4)=4a^3c^3/27$ and the fiber type is $I_2$ for $a\neq 0$.
\end{itemize}
%
%
The last claim of (b) follows from  \cite[Lemma~3.2]{ramsschuett}.
\end{proof}

As an immediate corollary of Lemma~\ref{lem:ftype112}~(b), we obtain the criterion when $\ell$ is met by $16$ lines on $\XXf \in \mT$.
\begin{cor}
\label{prop:15,16}
Let $X\in\mT$ be smooth of ramification type $R=2,1^2$.
 The line $\ell$ meets exactly 16 lines on $\XXf$  if and only if the coefficient of $x_3^4$ in $g$  equals $4a^3c^3/27$.\end{cor}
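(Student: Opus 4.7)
The proof is essentially a bookkeeping argument based on Lemma~\ref{lem:ftype112}. My plan is to use parts (a) and (b) together to express the count of lines meeting $\ell$ directly in terms of the Kodaira type of $F_{P_1}$: the fibers $F_{P_2},F_{P_3}$ are always irreducible of type $II$ and contribute no lines, while the five non-ramified singular fibers generically form $I_3$ triangles and contribute $3\cdot 5 = 15$ lines meeting $\ell$ (Lemma~\ref{lem:ftype112}(a)). So the whole question reduces to measuring how many \emph{additional} line components appear when the configuration degenerates within $\mathcal T$.

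For the direction ``$\Leftarrow$'', assume $\mbox{coeff}(g,x_3^4)=4a^3c^3/27$. By the first bullet of Lemma~\ref{lem:ftype112}(b) (which implicitly forces $a\neq 0$, since the common value is required to be non-zero), the fiber $F_{P_1}$ is of Kodaira type $I_2$. As a plane cubic, such a fiber decomposes as a line plus a smooth conic meeting at two points, so it contributes exactly one extra line meeting $\ell$. An Euler-number check shows that the five $I_3$ fibers are preserved and that the $+1$ in Euler from $I_1\to I_2$ at $P_1$ is compensated by the collision with one of the five generic $I_1$'s, producing the singular fibration $5I_3\oplus I_2\oplus 3I_1\oplus 2II$. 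The total line count is then $5\cdot 3+1 = 16$.

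For the direction ``$\Rightarrow$'', I would argue by elimination using Lemma~\ref{lem:ftype112}(b). If $a=0$, the second bullet gives $F_{P_1}$ of type $IV$: its three coplanar lines (all through $P_1$) push the line count to $15+3=18$, incompatible with the hypothesis. If instead $F_{P_1}$ remains of type $I_1$ as in the generic case, no extra line appears and we get only $15$. Hence the only possibility is $F_{P_1}$ of type $I_2$, and the first bullet of Lemma~\ref{lem:ftype112}(b) then forces $\mbox{coeff}(g,x_3^4)=4a^3c^3/27$. The main technical obstacle I anticipate is ruling out the parasitic scenario in which two of the remaining generic $I_1$'s collide at a point other than $P_1$ to produce an extra $I_2$ without altering $F_{P_1}$; my plan is to dispose of this by observing that such a coincidence is controlled by a distinct factor of the discriminant of the elliptic fibration and therefore defines a locus in $\mathcal T$ that is transverse to (and of different codimension from) the subfamily cut out by the coefficient condition, so it does not contribute to the ``exactly 16 lines'' locus generically.
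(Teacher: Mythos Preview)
Your overall strategy---casework on the Kodaira type of $F_{P_1}$---is close in spirit to the paper's, but there are two problems.

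First, a factual error: when $a=0$ and $F_{P_1}$ has type $IV$, the line $\ell$ meets exactly $15$ other lines, not $18$. The degeneration $a\to 0$ does not leave the five $I_3$ fibers intact; one of them is absorbed, and the resulting configuration is $IV\oplus 4I_3\oplus 4I_1\oplus 2II$ (or $5\,IV\oplus 2\,II$), giving $3+12=15$ lines (see Lemma~\ref{lemm:15,16b}(a) and Corollary~\ref{cor:211segre}(b)). This does not break your logic, since $15\neq 16$ still excludes the case, but it shows that the additive bookkeeping ``generic $15$ plus whatever $F_{P_1}$ contributes'' is unreliable: degenerating the fiber at $P_1$ can force changes elsewhere in the fibration. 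The same caveat applies to your forward direction, where an Euler-number check establishes only \emph{consistency} of the configuration $5I_3\oplus I_2\oplus 3I_1\oplus 2II$, not that the five $I_3$ fibers actually persist for every smooth $X$ on the coefficient locus.

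Second, and more seriously, your handling of the ``parasitic'' $I_2$ does not prove the statement as written. The corollary is a pointwise biconditional for every smooth $X\in\mT$, not a statement about a general member. A transversality or codimension argument on loci in $\mT$ would at best show that the two conditions agree on a dense open set, leaving open the possibility of a specific smooth $X$ with an $I_2$ away from $P_1$ (hence $16$ lines) but without the coefficient condition. What actually rules this out is a structural constraint on the fibration coming from the hidden $3$-torsion: an $I_2$ fiber can only occur where $\ell$ meets the fiber in a single point, i.e.\ only at $P_1$ for ramification type $R=2,1^2$ (the fibers $F_{P_2},F_{P_3}$ being forced to type $II$). This is precisely the content of \cite[\S 4]{ramsschuett} that the paper invokes: the ramified $I_1\to I_2$ at $P_1$ is the \emph{only} degeneration that increases the line count while preserving $R$. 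With that input, the corollary is immediate from Lemma~\ref{lem:ftype112}(b), and no genericity argument is needed. The same structural constraint disposes of further scenarios you did not list, such as an extra $I_3$ or $IV$ fiber appearing at an unramified point.
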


\begin{proof}
By \cite[\S 4]{ramsschuett} the ramified $I_1$ fiber degenerating to Kodaira type
$I_2$ is the only degeneration which increases the 
number of lines on $X$ met by $\ell$ without the ramification type changing.
Hence the corollary follows from Lemma \ref{lem:ftype112} (b).
\end{proof}

Cor.~\ref{prop:15,16}  gives the announced counterexample to Segre's Claim~\ref{eq-numberoflines}
for ramification type $R=2,1^2$. 
Now we are in the position to study  a general element of $\mT$. 
The question when a smooth quartic $\XXf$
with  a line of the second kind with ramification type $R=2,1^2$ 
can be obtained by Construction \ref{obs-segretrick} will be answered by Cor.~\ref{cor:211segre}~(a).

\begin{prop} \label{prop:geninmT}
\label{prop:S_5}
Let the quartic $X$ be a general member of the family $\mT$. 

%
%

{\sl (a)} The quartic $\XXf$ is not given by Construction~\ref{obs-segretrick}.

{\sl (b)} The surface $\Ruledeight$ consists of three planes and an irreducible quintic.
\end{prop}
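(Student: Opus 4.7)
For part (a), I would apply Lemma~\ref{lem:ramIV}. Since $P_1$ appears with multiplicity $2$ in $\mathcal R=2P_1+P_2+P_3$ and $\deg(\pi|_\ell)=3$, the map $\pi|_\ell$ is totally ramified at $P_1$, so the fiber $F_{P_1}$ meets $\ell$ set-theoretically only at $P_1$. By Lemma~\ref{lem:ftype112}~(a) a general $\XXf\in\mT$ has $F_{P_1}$ of Kodaira type $I_1$, in particular not $IV$; hence the contrapositive of Lemma~\ref{lem:ramIV} prevents such $\XXf$ from arising via Construction~\ref{obs-segretrick}.

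For part (b), I would start with Lemma~\ref{lemma-splitplanesegre}, applied at each ramification point, to obtain the three tangent planes $\operatorname{T}_{P_i}\XXf$ ($i=1,2,3$) as components of $\Ruledeight$. For a general $\XXf\in\mT$ these planes are pairwise distinct (an open condition on the parameters $a,b,c,g$), yielding the set-theoretic decomposition
\[
\Ruledeight=\operatorname{T}_{P_1}\XXf\cup\operatorname{T}_{P_2}\XXf\cup\operatorname{T}_{P_3}\XXf\cup\Ruledfive
\]
with $\deg(\Ruledfive)\leq 5$ by Lemma~\ref{lemma-ruledoctic}.

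To analyse $\Ruledfive$ I would invoke Lemma~\ref{lem:planeS8}. Part (a) forces every plane component of $\Ruledeight$ to be a tangent plane at a ramification point, and these are already exhausted, so $\Ruledfive$ carries no plane. Part (b) rules out irreducible quadric components in $\Ruledeight$, and thus in $\Ruledfive$. Consequently every irreducible component of $\Ruledfive$ has degree $\geq 3$, and the bound $\deg(\Ruledfive)\leq 5$ forces $\Ruledfive$ to be either empty or a single irreducible surface of degree between $3$ and $5$.

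The main obstacle is then to verify that $\deg(\Ruledfive)=5$ (equivalently, that the reduced $\Ruledeight$ attains the bound $8$) for a general member of $\mT$. My plan is to pick one explicit $\XXf_0\in\mT$ with sufficiently generic parameters $a,b,c,g$ (subject to $c\neq 0$ as per Remark~\ref{rem:deg}) and compute $\Ruledeight$ directly via the resultant construction used in the proof of Lemma~\ref{lemma-ruledoctic}. After factoring out the spurious multiplicity of $V(x_3)$ and the three tangent planes $\operatorname{T}_{P_i}\XXf$, the remaining factor must be shown to be an irreducible polynomial of degree $5$ by a direct factorisation check. Since the condition $\deg(\Ruledfive)=5$ is open on the parameter space of $\mT$, this single explicit verification propagates to a general member of $\mT$ and completes the proof.
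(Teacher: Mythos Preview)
Your proposal is correct and follows essentially the same route as the paper: part (a) via Lemma~\ref{lem:ftype112}~(a) and Lemma~\ref{lem:ramIV}, part (b) via the tangent-plane components from Lemma~\ref{lemma-splitplanesegre}, then Lemma~\ref{lem:planeS8} for irreducibility of the residual. The only difference is in how the equality $\deg(\Ruledfive)=5$ is established: the paper carries out the resultant computation of Lemma~\ref{lemma-ruledoctic} symbolically in the parameters $a,b,c,g$ of $\mT$ and reads off the quintic factor directly, whereas you propose a single explicit specialisation plus openness. Both are valid; the paper's symbolic computation has the added payoff that the resulting formula for $\Ruledfive$ is later reused (setting $a=0$) in the proof of Corollary~\ref{cor:211segre}.
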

\begin{proof} 

%
{\sl (a)}  The line $\ell$ meets a fiber of type $I_1$   in only one point by Lemma~\ref{lem:ftype112}~(a).
The claim follows directly from  Lemma~\ref{lem:ramIV}.

{\sl (b)} We follow the proof of Lemma~\ref{lemma-ruledoctic} to find the surface $\Ruledeight$. By direct computation
$\Ruledeight$ consists of the tangent planes  $\mbox{T}_{P_1}\XXf$, $\ldots$, 
 $\mbox{T}_{P_3}\XXf$ and a  quintic $\Ruledfive$. 
In particular, by direct computation, $\Ruledfive$ contains none of  the planes $\mbox{T}_{P_1}\XXf$, $\ldots$, 
 $\mbox{T}_{P_3}\XXf$. 
 By Lemma~\ref{lem:planeS8} the quintic  $\Ruledfive$ is irreducible.
\end{proof}

In order to give an intrinsic characterization of  
quartics with lines of the second kind with ramification type $R=2,1^2$ 
that can be obtained by Segre's
construction,
we will need the following lemma.

\begin{lemm} \label{lemm:15,16b}
Let $X\in\mT$ be smooth of ramification type $R=2,1^2$
and  let   the fiber $F_{P_1}$ be of Kodaira type $IV$
(i.e.  $\ell$  runs through the singular point of a type $IV$ fiber  of the fibration \eqref{eq:fibration}).
Then
 \begin{enumerate}
\item[(a)] The line $\ell$ is met by exactly 15 other lines on $\XXf$.
\item[(b)] The 15 lines that meet $\ell$  form five triplets of coplanar lines.
None of them is contained in any of the planes $\mbox{T}_{P_2}\XXf$, $\mbox{T}_{P_3}\XXf$. 
\item[(c)] The fibration \eqref{eq:fibration} has no  fibers of Kodaira type $I_2$.
\end{enumerate}
\end{lemm}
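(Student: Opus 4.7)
By Lemma~\ref{lem:ftype112}(b), the hypothesis that $F_{P_1}$ is of Kodaira type $IV$ is equivalent to setting $a=0$ in the defining equation of $\mT$, and the same lemma gives that $F_{P_2}$ and $F_{P_3}$ are of type $II$. My plan is to prove the three statements simultaneously by establishing the complete singular fibre configuration of the elliptic fibration \eqref{eq:fibration} as
\[
4\,I_3 \;\oplus\; IV \;\oplus\; 4\,I_1 \;\oplus\; 2\,II,
\]
whose total Euler characteristic $4\cdot 3 + 4 + 4 + 2\cdot 2 = 24 = e(\XXf)$ is consistent with the K3 surface $\XXf$.

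From this fibre pattern, all three parts follow at once. Part~(c) is immediate since no $I_2$ appears. For part~(a), the only fibres containing lines on $\XXf$ are the four $I_3$'s (each a triangle of three coplanar lines meeting $\ell$ at three distinct points) and the $IV$ fibre at $P_1$ (three lines through the common singular point $P_1\in\ell$); the $I_1$ and $II$ fibres are irreducible and contribute no lines, so the total count is $3+4\cdot 3=15$. For part~(b), each $I_3$ triangle lies inside the unique plane through $\ell$ cutting it out, and the three components of the $IV$ lie inside $T_{P_1}\XXf$, yielding the five coplanar triplets. Finally, the planes $T_{P_2}\XXf$ and $T_{P_3}\XXf$ cut $\XXf$ along $\ell$ plus the irreducible cuspidal cubic fibres $F_{P_2}$ and $F_{P_3}$, so they contain no line other than $\ell$, which shows that no triplet can lie in either of them.

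The crux of the proof is therefore to justify the stated fibre configuration. My approach is to substitute $a=0$ into the defining equation of $\XXf\in\mT$ and compute the discriminant $\Delta\in\KK[x_3,x_4]$ of \eqref{eq:fibration} directly via a Weierstrass model; after the known ramified contributions $4+2+2$ from $P_1,P_2,P_3$, the residual factor of $\Delta$ has degree $16$ and should split as $P^3 Q$ with squarefree coprime $P,Q$ of degree $4$, which via Tate's algorithm yields precisely four $I_3$'s and four $I_1$'s. The main obstacle is ruling out unwanted coincidences among the sixteen unramified roots of $\Delta$, i.e.\ showing that no two coalesce to produce a spurious $I_2$ fibre; the most conceptual alternative to an explicit polynomial check is a deformation argument from Lemma~\ref{lem:ftype112}(a) following the collision of one generic $I_3$ fibre with the ramified $I_1$ at $P_1$ into the $IV$ (since $3+1=4$), combined with a verification that the remaining $I_1$'s do not pair up.
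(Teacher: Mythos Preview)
Your plan has a real gap: you aim to pin down the single configuration $4\,I_3 \oplus IV \oplus 4\,I_1 \oplus 2\,II$, but this is not the only possibility for smooth $X\in\mT$ with $a=0$. As recorded later in Corollary~\ref{cor:211segre}(b), the fibration can equally well be of type $5\,IV \oplus 2\,II$ (when $\sing(\Ruledfour)$ is a line rather than a twisted cubic). Both configurations do yield exactly $15$ lines in five coplanar triplets with no $I_2$, so the lemma's conclusions survive---but your strategy of establishing one specific configuration and reading everything off it cannot succeed as stated. The deformation argument you sketch is only generic: it tells you what happens as $a\to 0$ for general $b,c,g$, not for every smooth member of the $a=0$ sublocus, and therefore cannot by itself rule out further $I_3\leadsto IV$ degenerations or verify that ``the remaining $I_1$'s do not pair up'' for all parameter values.

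The paper sidesteps any discriminant computation. For (c) it invokes \cite[Lemma~3.2]{ramsschuett} directly: that result already restricts $I_2$ fibres to occur only at $F_{P_1}$ for this ramification type, and $F_{P_1}$ is of type $IV$ by hypothesis. Parts (a) and (b) are then referred back to the proof of Lemma~\ref{lem:ftype112}: the $3$-torsion structure on the fibres (implicit there via \cite{ramsschuett}) forces every reducible fibre other than a possible $I_2$ at $P_1$ to be of type $I_3$ or $IV$; once $I_2$ is excluded, all lines meeting $\ell$ lie in coplanar triplets, and the count of five triplets follows without having to separate the two global configurations. If you want to rescue your explicit approach, you would need to allow for both configurations from the outset and argue that in either case no $I_2$ appears---but at that point citing \cite[Lemma~3.2]{ramsschuett} is both shorter and sharper.
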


\begin{proof} (a), (b)  follow directly from the proof of Lemma~\ref{lem:ftype112}.

{\sl (c)} By \cite[Lemma~3.2]{ramsschuett} only the fiber $F_{P_1}$ can be of type $I_2$,
but it isn't by assumption.
\end{proof}

Now we are in the position to 
give the desired coordinate-free characterization
of quartics with lines of the second kind with ramification type $R=2,1^2$ 
which can be obtained by Segre's
construction. We maintain the notation \eqref{eq:rl211}.

\begin{cor} \label{cor:211segre}
\label{cor:6.7}
Let $\XXf \subset \PP^3$ be a smooth quartic
and let $\ell\subset \XXf$ be a line of the second kind 
with ramification type $R=2,1^2$. 

(a) The quartic $\XXf$ is given by Construction~\ref{obs-segretrick} iff 
the line $\ell$ runs through the  singular point of a type $IV$ fiber of   the fibration  \eqref{eq:fibration}.

(b) If the equivalent conditions of (a) hold, then                 
\begin{itemize}
\item the surface $\Ruledeight$ defined by $(\XXf, \ell)$ is a septic that consists of the tangent planes $\mbox{T}_{P_1}\XXf$, $\ldots$
 $\mbox{T}_{P_3}\XXf$ and an (irreducible) ruled quartic $\Ruledfour$;
\item the line $\ell$ is met by exactly five triplets of coplanar lines on $\XXf$, that are contained in the plane 
$\mbox{T}_{P_1}\XXf$,  and four other planes $L_1$, $\ldots$, $L_4$;
\item the decomposition \eqref{eq-segretrick} is unique and coincides with \eqref{eq:segre1111}; 
\item the fibration \eqref{eq:fibration} is of type $IV \oplus 4I_3 \oplus 4I_1 \oplus 2II$ (resp. $5 IV \oplus 2II$) iff
$\sing(\Ruledfour)$ is a twisted cubic (resp. a line). 
\end{itemize} 
\end{cor}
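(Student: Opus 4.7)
The plan is to deduce (a) by an application of Lemma \ref{lem:S4tr}, then bootstrap the structure obtained there to prove (b). The forward direction of (a) is immediate: with ramification type $R=2,1^2$ the fiber $F_{P_1}$ meets $\ell$ only at $P_1$, so if $X$ arises from Construction \ref{obs-segretrick} then Lemma \ref{lem:ramIV} forces $F_{P_1}$ to have Kodaira type $IV$ with singular point $P_1$.

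For the backward direction I would assume that $F_{P_1}$ has type $IV$ with singular point $P_1$ and verify the two hypotheses of Lemma \ref{lem:S4tr}. The second hypothesis is supplied by Lemma \ref{lemm:15,16b}, which produces five coplanar triplets of lines meeting $\ell$: one being the fiber $F_{P_1}$ inside $T_{P_1}X$, the others in pairwise distinct planes $L_1,\ldots,L_4$, each distinct from all three tangent planes $T_{P_i}X$. The first hypothesis demands a decomposition $S_8 = S_4 \cup T_{P_1}X \cup T_{P_2}X \cup T_{P_3}X$ with $S_4$ a quartic; the three tangent planes are components of $S_8$ by Lemma \ref{lemma-splitplanesegre}, and the residual surface $S'$ (of degree $\leq 5$) admits no plane or irreducible quadric component by Lemma \ref{lem:planeS8}. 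Once I have shown that $\deg S' = 4$, Lemma \ref{lem:S4tr} yields $X = \lambda S' + L_1\cdots L_4$, and Lemma \ref{lemma-linewayfromsingularities} identifies $S_4 := \lambda S'$ as a ruled quartic with directrix $\ell$, completing Construction \ref{obs-segretrick}.

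Given (a), part (b) is essentially a read-off: the septic decomposition of $S_8$ and the five-triplet description are what was produced above (the latter also recorded in Lemma \ref{lemm:15,16b}(b)), and the uniqueness of the decomposition \eqref{eq-segretrick} is Lemma \ref{lem:4eqplanes}. For the Kodaira fiber classification I would invoke Lemma \ref{cor:singfib}: each $L_j$ contains a fiber of Kodaira type $IV$ (respectively $I_3$) exactly when $\sing(S_4)$ is a line of triple points (respectively a twisted cubic). Combined with $F_{P_1}$ of type $IV$ and $F_{P_2}, F_{P_3}$ of type $II$, an Euler number count against $\chi(X) = 24$ then yields $5 IV \oplus 2 II$ in the line-of-triple-points case and $IV \oplus 4 I_3 \oplus 4 I_1 \oplus 2 II$ in the twisted cubic case (with $I_2$ fibers excluded by Lemma \ref{lemm:15,16b}(c)). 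The main obstacle will be establishing $\deg S' = 4$: I expect to handle this by specialising the resultant computation used in the proofs of Lemma \ref{lemma-ruledoctic} and Proposition \ref{prop:geninmT}(b) to the slice $\{a = 0\} \subset \mT$ provided by Lemmas \ref{lem:2,1^2} and \ref{lem:ftype112}(b), where an additional linear factor should appear and reduce the generic residual degree $5$ to $4$.
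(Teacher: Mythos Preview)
Your plan is essentially the paper's own proof. The forward direction of (a) via Lemma~\ref{lem:ramIV}, the backward direction via Lemma~\ref{lem:S4tr} fed by Lemma~\ref{lemm:15,16b}, and the reduction of the residual degree from $5$ to $4$ by specialising the computation of Proposition~\ref{prop:geninmT}(b) to $a=0$ in $\mT$ are exactly what the paper does; likewise for the fibre classification via Lemma~\ref{cor:singfib} and the Euler-number count.

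One point deserves a bit more care than you indicate. For uniqueness you invoke Lemma~\ref{lem:4eqplanes} directly, but that lemma only tells you that the ruled quartic is determined once the four planes are fixed. In an arbitrary decomposition $\Ruledfourf + \TLL_1\cdots\TLL_4$ you must first argue that $\{\TLL_1,\ldots,\TLL_4\}=\{L_1,\ldots,L_4\}$. The paper does this by observing that each $\TLL_j$ meets $X$ in four lines, that the $\TLL_j$ are pairwise distinct (else $X$ would be singular), and crucially that no $\TLL_j$ can equal $\mbox{T}_{P_1}X$: since $\ell\cap\sing(\Ruledfourf)=\emptyset$, the curve $\TLL_j\cap X=\TLL_j\cap\Ruledfourf$ has a singular point off $\ell$, whereas the type $IV$ fiber in $\mbox{T}_{P_1}X$ has its only singularity on $\ell$. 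Lemma~\ref{lemm:15,16b}(b) then forces the identification of the planes, after which Lemma~\ref{lem:4eqplanes} applies. You should also record, as the paper does by direct computation, that $\Ruledfour\cap\mbox{T}_{P_i}X$ consists of lines for $i=1,2,3$; this is needed both to confirm that $\Ruledfour$ contains none of these planes and to verify the hypothesis of Lemma~\ref{lemma-linewayfromsingularities} that $\Ruledfour$ is globally a union of lines through $\ell$.
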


\begin{proof}(a) 
The implication ($\Rightarrow$) follows directly from  Lemma~\ref{lem:ramIV}.

($\Leftarrow$) 
By Lemma \ref{lem:2,1^2} we can assume that $\XXf \in \mT$. Suppose that the fiber $F_{P_1}$ is of type $IV$.
By Lemma~\ref{lem:ftype112}~(c) the quartic $\XXf$ is given by the equation of Lemma~\ref{lem:2,1^2} with $a = 0$.
We put $a=0 $ in 
the quintic $S_5$ from (the proof of) Proposition \ref{prop:S_5} to see that 
$\Ruledeight$ consists of the tangent planes  $\mbox{T}_{P_1}\XXf$, $\ldots$, 
 $\mbox{T}_{P_3}\XXf$ and a  quartic $\Ruledfour$. 
In particular, by direct computation, for $i= 1, 2,3$
\begin{equation} \label{eq:intplanes21}
\mbox{ the intersection of  }  \Ruledfour \mbox{ with the plane } \mbox{T}_{P_i}\XXf \mbox{ consists of lines},  
\end{equation}
so $\Ruledfour$ contains none of  the planes $\mbox{T}_{P_1}\XXf$, $\ldots$, 
 $\mbox{T}_{P_3}\XXf$.
The assumption \eqref{eq:decS4tr}  of Lemma~\ref{lem:S4tr} is fulfilled. By Lemma~\ref{lemm:15,16b}~(b) there exist four planes $L_1, \ldots L_4$, each of which contains 
four lines on $\XXf$ that meet $\ell$. Lemma~\ref{lem:S4tr} completes the proof of (a).

(b) Let $\Pi \subset \PP^3$ be a plane that contains $\ell$.  By definition of $\Ruledeight$ the intersection
$\Ruledfour \cap \Pi$
consists of lines, provided $\Pi \neq \mbox{T}_{P_i}\XXf$  where $i= 1, 2,3$.
Thus \eqref{eq:intplanes21} yields that   $\Ruledfour$ is a union of a family of lines each of which meets $\ell$. Lemma~\ref{lemma-linewayfromsingularities} implies that $\Ruledfour$ is an 
(irreducible) ruled quartic. Thus Lemma~\ref{lemma-splitplanesegre} gives the first claim of (b),
whereas  the second claim results immediately from Lemma~\ref{lemm:15,16b}. 

To prove the  uniqueness suppose that $\XXf$ is given by \eqref{eq-segretrick}.
By Lemma~\ref{lemma-linewayfromsingularities}, the quartic $\Ruledfourf$
 is a ruled surface, $\ell$ is its directrix and
$\sing(\Ruledfourf)$ is either a twisted cubic or a line of triple points.
Moreover, we have  
the equality ($\ast$) of \eqref{eq:tllhsect0}, so the plane  $\TLL_j$   intersects the quartic $\XXf$ along four lines.
Since $\ell$ contains no singularities of $\Ruledfourf$, the curve
$\TLL_1 \cap \XXf$ has at least one singularity away from $\ell$. Therefore 
\begin{equation} \label{eq:nottan}
\TLL_1 \neq  \mbox{T}_{P_1}\XXf . 
\end{equation}
Furthermore, the quartic $\XXf$ is smooth and $\Ruledfourf$ is singular along a curve, so $\TLL_{j_1} \neq \TLL_{j_2}$ for $j_1 \neq j_2$. 
Thus Lemma~\ref{lemm:15,16b}~(b) implies the equality
\begin{equation} \label{eq:4planesequal}
\{ \TLL_1,  \ldots, \TLL_4 \} = \{L_1,  \ldots, L_4\}.
\end{equation}
The uniqueness follows directly from Lemma~\ref{lem:4eqplanes}.

To show the last claim recall that \eqref{eq:fibration} has two type $II$ fibers (see Lemma~\ref{lem:ftype112}~(b)) and none of type $I_2$  by Lemma~\ref{lemm:15,16b}~(c).
It has one type $IV$ fiber contained in $ \mbox{T}_{P_1}\XXf$ by assumption. 

Assume that $\sing(\Ruledfour)$ is a twisted cubic. Then \eqref{eq:fibration} has four $I_3$ fibers by Cor.~\ref{cor:singfib}. 
The existence of four singular fibers
of Kodaira type $I_1$ follows from \cite[Lemma~4.2]{ramsschuett}. 

Suppose that  $\sing(\Ruledfour)$ is a line. We obtain  four type $IV$ fibers from Cor.~\ref{cor:singfib}. 
In both cases  Euler number computation shows that we found all singular fibers.
\end{proof}

\section{Ramification type $2^2$}
\label{s:corr}

Let  $X\subset\PP^3$ be a smooth quartic  that 
contains a line $\ell$ of the second kind with ramification type $R=2^2$ and let 
\begin{equation} \label{eq:rl22}
\mathcal R  = 2  P_1 + 2 P_2.
\end{equation}
By \cite[Lemma 4.4]{ramsschuett}
the quartic $\XXf$  
is projectively equivalent to a member of the family $\mZ$ given by the polynomials
\begin{eqnarray}
\label{eq:Z}
\mZ: \;\; x_3x_1^3+x_4x_2^3+x_1x_2q(x_3,x_4)+g(x_3,x_4), 
\end{eqnarray}
where $q, g  \in \KK[x_3, x_4]$
are  homogeneous polynomials of degree $2$, resp.~$4$. Here the line $\ell$ becomes $\mbox{V}(x_3, x_4)$ and
   $P_1=(1 :0 : 0: 0), P_2=(0: 1: 0 :0)$. Moreover,  \cite[Lemma~4.5]{ramsschuett}
yields that
\begin{equation} \label{eq:condIV}
\mbox{both fibers } F_{P_1}, F_{P_2} \mbox { are of Kodaira type } IV \quad \mbox{iff} \quad  q=\gamma x_3x_4 \mbox{ for some } \gamma\in\KK.
\end{equation}


In this section we  investigate the quartics in the family $\mZ$. 
 At first we study general elements of $\mZ$ (see Prop.~\ref{prop-generic-in-mZ}). 
Quartics that can be obtained by Construction \ref{obs-segretrick} are discussed in  Cor.~\ref{cor:22segre}~(a).

\begin{prop} \label{prop-generic-in-mZ}
Let the quartic $X$ be a general member of the family $\mZ$.

{\sl (a) } $\XXf$ is smooth and contains exactly $18$ lines $\ell' \neq \ell$ that meet the line of the second kind $\ell$. 

{\sl (b)}  The quartic $\XXf$ cannot be obtained from  Construction~\ref{obs-segretrick}.

{\sl (c)}  The surface $\Ruledeight$ consists of two planes and an irreducible sextic.

\end{prop}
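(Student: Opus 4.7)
All three parts rely on the explicit normal form \eqref{eq:Z} together with the elliptic fibration $\pi$ induced by $\ell$.

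For (a), smoothness is an open condition on the coefficients of $q$ and $g$, so it suffices to exhibit one smooth member of $\mZ$; this is routine by verifying nonvanishing of the Jacobian minors on a specific example. To count the lines meeting $\ell$, I would first read off from \eqref{eq:Z} that the residual cubic $F_{P_1}$ in $V(x_3)$ is cut out by a polynomial of the form $x_2^3+\alpha\,x_1x_2x_4+\beta\,x_4^3$ (with $\alpha,\beta$ depending polynomially on the coefficients of $q,g$), whose generic locus is a nodal cubic, i.e.\ Kodaira type $I_1$; the same holds for $F_{P_2}$ at $P_2$. Every line on $X$ meeting $\ell$ is a component of some reducible fibre of $\pi$, and for a generic $X\in\mZ$ the discriminant of the Weierstrass model of $\pi$ factorises (up to a constant) into six triple roots and six simple roots, giving six fibres of type $I_3$ and six of type $I_1$. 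Each $I_3$ fibre contributes $3$ coplanar lines meeting $\ell$, for a total of $18$.

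Part (b) is immediate from Lemma~\ref{lem:ramIV}: if $X$ came from Construction~\ref{obs-segretrick}, every fibre meeting $\ell$ in exactly one point would be of Kodaira type $IV$. But $\mathcal R=2P_1+2P_2$ forces $F_{P_1}$ and $F_{P_2}$ to meet $\ell$ in a single point each (with multiplicity $3$), and by (a) these fibres are of type $I_1$, not $IV$, contradiction.

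For (c), Lemma~\ref{lemma-splitplanesegre} yields $\mbox{T}_{P_1}X,\mbox{T}_{P_2}X\subset\Ruledeight$, so the residual hypersurface has degree at most $6$. I would then rule out every possible further decomposition of this residual: no additional plane components by Lemma~\ref{lem:planeS8}(a) (which restricts planes in $\Ruledeight$ to tangent planes at ramification points, both already used); no irreducible quadric component by Lemma~\ref{lem:planeS8}(b); and no decomposition into two cubics or into a cubic plus three planes, which I would exclude by combining the intersection with a general plane through $\ell$ (which meets $\Ruledeight$ in $\ell$ together with the three flex tangents of the residual cubic, all with controlled multiplicities) with an explicit resultant computation of $\Ruledeight$ on one sufficiently generic sample of $\mZ$ following the procedure of Lemma~\ref{lemma-ruledoctic}. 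Openness of irreducibility in the family then extends the conclusion to a general member.

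The main obstacle is part (c): although the argument is algorithmic, the explicit resultant computation of $\Ruledeight$ in the family $\mZ$ and the verification that the degree-$6$ residual is irreducible are substantial and require a careful analysis of the polynomial structure of \eqref{eq:Z}. Part (a) similarly depends on a concrete sample computation to pin down the factorisation pattern of the discriminant and to produce one smooth quartic in $\mZ$ with exactly $18$ lines meeting $\ell$, from which the general case follows by upper semicontinuity.
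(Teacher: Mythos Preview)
Your approach to (a) and (b) is essentially the paper's: the same discriminant factorisation $\Delta = x_3x_4\,g\,(q^3+27x_3x_4g)^3$ yielding $6I_3\oplus 6I_1$ generically, a specific smooth sample to anchor the open conditions, and then Lemma~\ref{lem:ramIV} applied to the $I_1$ fibres at $P_1,P_2$ to kill Construction~\ref{obs-segretrick}.

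For (c) your strategy---split off the two tangent planes via Lemma~\ref{lemma-splitplanesegre}, exclude further planes and quadrics via Lemma~\ref{lem:planeS8}, then verify irreducibility of the residual sextic on one sample and invoke openness---is sound, but the paper does something sharper. Carrying out the resultant procedure of Lemma~\ref{lemma-ruledoctic} symbolically over the whole family $\mZ$ yields a closed-form sextic
\[
h = 27x_1^3x_3^2x_4 + 27x_2^3x_3x_4^2 + 27x_1x_2x_3x_4\,q - q^3,
\]
and after the substitution $x_1\mapsto x_1/(x_3x_4)$ (and clearing denominators) this becomes monic cubic in $x_1$ with coefficients in $(x_3)$ but constant term not in $(x_3)^2$, so Eisenstein at the prime $(x_3)$ gives irreducibility whenever $x_3\nmid q$. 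This buys you a uniform statement (irreducible for \emph{every} smooth $X\in\mZ$ with $x_3,x_4\nmid q$, not just the general one), avoids any sample computation, and as a bonus confirms $\deg S_8=8$ without a separate argument. Your route works for the proposition as stated, but the explicit $h$ and the Eisenstein trick are worth knowing.
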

\begin{proof} {\sl (a)}
We consider the elliptic fibration $\pi$ 
induced by the line $\ell\subset X \in \mZ$ (see \eqref{eq:fibration}).
Generically, $\pi$ has six singular fibers of Kodaira type $I_1$
located at $0, \infty$ and at the zeroes of $g$,
and 6 fibers of Kodaira type $I_3$ at the zeroes of
$q^3+27x_3x_4g$.
In other words, the discriminant $\Delta$ of the generic fiber is
\[
\Delta = x_3x_4g(q^3+27x_3x_4g)^3.
\]
In particular, for general $\XXf \in \mZ$ the fibration \eqref{eq:fibration} is of type
\begin{equation} \label{eq:gf2^2}
6 I_3 \oplus 6 I_1 .
\end{equation}

Recall that singularities of $X$ either give rise to some of the reducible fibers
in the first place
(if the generic member of $\mZ$ were not to be  smooth),
or to a degeneration of the fibration.
That is, either there are  further or worse singular fibers,
or the degeneration leaves the class of K3 surfaces.
Presently one can easily check for some specific member $X\in\mZ$, 
for instance for char$(\KK)\neq 2,3,5,7$ at 
\[
q=3(2x_3^2-x_3x_4+x_4^2), \;\; g=4(20 x_3^4+5 x_4^4-18 x_3^3 x_4-4 x_3^2 x_4^2-9 x_3 x_4^3)/3
\]
that the six planes in $\PP^3$ at the zeroes of $q^3+27x_3x_4g$
split into $\ell$ and three other lines when intersected with $X$.
In particular, this proves that the general member of $\mZ$ is smooth
and contains exactly $6\times 3 = 18$ lines meeting $\ell$.

{\sl (b)} The claim follows directly from \eqref{eq:gf2^2} and Lemma~\ref{lem:ramIV}.

{\sl (c)} By Lemma \ref{lemma-splitplanesegre} the surface $\Ruledeight$ contains the planes tangent to $\XXf$ in $P_1, P_2$. 
The residual surface will be shown to be an irreducible sextic by Lemma~\ref{lem:S_6}.
\end{proof}

To  complete the proof of Prop.~\ref{prop-generic-in-mZ} we need the following lemma.
\begin{lemm} 
\label{lem:S_6}
For a smooth quartic $X\in\mZ$, the surface $S_8$ contains
an irreducible sextic hypersurface 
unless $x_3$ or $x_4$ divides $q$.
\end{lemm}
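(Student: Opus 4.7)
The plan is to compute explicitly the polynomial cutting out the residual (non-plane) part of $S_8$, and then establish its irreducibility via the classical factorisation $a^3 + b^3 + c^3 - 3abc = \prod_{\zeta^3 = 1}(a + \zeta b + \zeta^2 c)$ combined with a Galois argument.

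Applying the proof of Lemma \ref{lemma-ruledoctic} to $f = x_3 x_1^3 + x_4 x_2^3 + x_1 x_2 q + g$, one has $\alpha_{1,0} = x_1^3$ and $\alpha_{0,1} = x_2^3$, so both $z_1$ and $z_2$ divide the resulting polynomial $H_8$. Writing $H_8 = z_1 z_2 J$ with $J$ of bidegree $(6,1)$, and using the three roots $z_1 = z_2 \omega \zeta$ of $H_3 = x_3 z_1^3 + x_4 z_2^3$ (where $\omega^3 = -x_4/x_3$ and $\zeta$ ranges over cube roots of $1$), a direct calculation yields, after dehomogenising $z_2 = 1$,
\[
\operatorname{Res}_{z_1}(J, H_3) = x_3^3 \, \Phi, \qquad \Phi := q^3 - 27\bigl(x_3^2 x_4\, x_1^3 + x_3 x_4^2\, x_2^3 + x_3 x_4\, q\, x_1 x_2 \bigr).
\]
Crucially, all terms involving $g$ cancel out. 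Combined with the pushforwards of $V(z_1) \cap V(H_3)$ and $V(z_2) \cap V(H_3)$ to $\PP^3$, which yield $V(x_4)$ and $V(x_3)$ respectively, this gives $\mathfrak{H} = 4V(x_3) + V(x_4) + V(\Phi)$; the analogous computation produces $\mathfrak{H}' = V(x_3) + 4V(x_4) + V(\Phi)$. The support bounds from the proof of Lemma \ref{lemma-ruledoctic} then imply $S_8 \subset V(x_3) \cup V(x_4) \cup V(\Phi)$.

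To prove irreducibility of $V(\Phi)$ when $x_3 \nmid q$ and $x_4 \nmid q$, the key observation is that $\Phi$ fits the cubic-sum identity with $a = q$, $b = -3u\, x_1$, $c = -3v\, x_2$, once one adjoins formal cube roots $u, v$ satisfying $u^3 = x_3^2 x_4$, $v^3 = x_3 x_4^2$, and $uv = x_3 x_4$. This yields the factorisation
\[
\Phi = \prod_{\zeta^3 = 1}\bigl(q - 3 \zeta u\, x_1 - 3 \zeta^2 v\, x_2\bigr)
\]
into three pairwise distinct linear (in $x_1, x_2$) factors over $L := \KK(x_3, x_4)(\zeta, u)$. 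The Galois action $u \mapsto \zeta u$ (which sends $v = x_3 x_4/u \mapsto \zeta^2 v$) cyclically permutes the factors, so they form a single Galois orbit over $\KK(x_3, x_4)$; hence $\Phi$ is irreducible as a polynomial in $x_1, x_2$ over the fraction field.

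To descend to $\KK[x_1, \ldots, x_4]$ via Gauss's lemma, one verifies primitivity by inspecting the nonzero coefficients of $\Phi$ as a polynomial in $x_1, x_2$: these are $-27 x_3^2 x_4$, $-27 x_3 x_4^2$, $-27 x_3 x_4 q$ and $q^3$, whose gcd divides $\gcd(x_3 x_4, q^3) = 1$ precisely under the hypothesis $x_3 \nmid q$ and $x_4 \nmid q$. Hence $\Phi$ is irreducible in $\KK[x_1, \ldots, x_4]$. Combined with Lemma \ref{lem:planeS8}(b), which forbids $S_8$ from being a union of planes, one concludes that the irreducible sextic $V(\Phi)$ must be a component of $S_8$. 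The main obstacle is spotting the cubic-sum structure: the constants $27 = 3^3$ and the mixed term $-27 x_3 x_4 q\, x_1 x_2 = -3 q \cdot (-3u x_1)(-3v x_2)$ (via $uv = x_3 x_4$) fit exactly; without this structural observation the irreducibility of $\Phi$ is far from transparent.
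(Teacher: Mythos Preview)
Your proof is correct and arrives at the same sextic polynomial $\Phi = -h$ as the paper, but the irreducibility arguments are genuinely different. The paper, after noting that $x_3 \mid h$ iff $x_3 \mid q$ (and likewise for $x_4$), multiplies $h$ by $x_3 x_4^2$ and substitutes $x_1 \mapsto x_1/(x_3 x_4)$ to obtain
\[
h' = 27x_1^3 + 27x_2^3 x_3^2 x_4^4 + 27 x_1 x_2 x_3 x_4^2 q - x_3 x_4^2 q^3,
\]
which is visibly Eisenstein at the prime $(x_3)$ in $\KK[x_2,x_3,x_4][x_1]$ whenever $x_3 \nmid q$; this is a two-line argument once the substitution is spotted. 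Your approach instead recognises $\Phi$ as the norm of $q - 3u x_1 - 3v x_2$ from the Kummer extension $L = \KK(x_3,x_4)(u)$ with $u^3 = x_3^2 x_4$, uses the transitive Galois action on the three linear factors to deduce irreducibility over $\KK(x_3,x_4)$, and then descends via Gauss's lemma. The Eisenstein route is considerably shorter and more elementary; your route is more structural, in that it explains \emph{why} the constants $27$ and the mixed term fit together, and it would adapt more readily to analogous higher-degree situations. Your explicit verification that $V(\Phi)$ is actually a component of $S_8$ --- via the set-theoretic inclusion $S_8 \subset V(x_3)\cup V(x_4)\cup V(\Phi)$ combined with Lemma~\ref{lem:planeS8}(b) --- is a point the paper leaves implicit.
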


\begin{proof} 
Following  the proof of Lemma~\ref{lemma-ruledoctic},
we arrive at the sextic polynomial
\begin{eqnarray}
\label{eq:h}
h:=27x_1^3x_3^2x_4+27x_2^3x_3x_4^2+27x_1x_2x_3x_4q-q^3.
\end{eqnarray}
Note that $x_3$ or $x_4$ divides $h$ if and only if the coordinate in question divides $q$.
Otherwise, we can multiply $h$ by $x_3x_4^2$ and then substitute $x_1$ by $x_1/(x_3x_4)$,
so that $h$ is equivalent to the following polynomial:
\[
h':=27x_1^3+27x_2^3x_3^2x_4^4+27x_1x_2x_3x_4^2q-x_3x_4^2q^3.
\]
Regarding $h'$ as an element in $\KK[x_2,x_3,x_4][x_1]$,
the Eisenstein criterion applied to the prime ideal $(x_3)$ shows that
$h'$ is irreducible if $x_3\nmid q$.
\end{proof}

Prop.~\ref{prop-generic-in-mZ}
shows that neither 
Claim~\ref{eq-splittingquartic} nor
Claim~\ref{eq-generalequation} is true in general for ramification type $R=2^2$.  
In particular, \emph{it is the erroneous Claim~\ref{eq-splittingquartic} 
that brings about the crucial false statements about configurations 
of lines  made by Segre in 
\cite{Segre}}
which we corrected in \cite{ramsschuett}.

We shall now elaborate the family $\mZ$ a little further to study the quartics given by Construction~\ref{obs-segretrick}.
For a specific member $X\in \mZ$ to be smooth,
it suffices to verify the following criterion:
\begin{lemm} 
\label{lem:smooth}
A quartic $X\in \mZ$ is smooth if and only if
\begin{itemize}
\item
$\Delta$ has still 6 single and 6 triple roots with the possible extension that
\item
$\Delta$ may have double roots at $0, \infty$ (i.e.~square factors of $x_3, x_4$) and
\item
$\Delta$ may have fourfold roots at the zeroes of $q^3+27x_3x_4g$.
\end{itemize}
\end{lemm}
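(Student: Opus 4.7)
The plan is to classify singularities of $X\in\mZ$ via the Jacobian criterion directly in terms of the polynomials $q$ and $g$, and to match each configuration with the factorisation of $\Delta = x_3 x_4 g (q^3+27 x_3 x_4 g)^3$.

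First I would verify that $X$ is smooth along $\ell$: evaluating the gradient of $f = x_3 x_1^3 + x_4 x_2^3 + x_1 x_2 q + g$ at any $(x_1{:}x_2{:}0{:}0)\in\ell$ gives $(0,0,x_1^3,x_2^3)$, which never vanishes on $\ell$. Hence any singular point of $X$ lies off $\ell$, inside some fiber $F_t$ of $\pi$.

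Next I would analyse the ramification fiber $F_{P_1}$ at $x_3=0$ (and symmetrically $F_{P_2}$ at $x_4=0$). Writing $q(0,x_4)=q_0 x_4^2$ and $g(0,x_4)=g_0 x_4^4$, the residual cubic inside $V(x_3)$ is $x_2^3 + q_0 x_4 x_1 x_2 + g_0 x_4^3=0$. A split by $(q_0,g_0)$ yields four cases. If $q_0 g_0\neq 0$, the fiber is a nodal cubic ($I_1$), $X$ is smooth, and $\mathrm{ord}_{x_3}\Delta=1$. If $q_0\neq 0$ and $g_0=0$, the fiber decomposes as a line plus a conic (type $I_2$), and $X$ is smooth at the extra node $(0{:}0{:}0{:}1)$ iff the coefficient of $x_3 x_4^3$ in $g$ is nonzero, i.e.~$x_3^2\nmid g$, giving $\mathrm{ord}_{x_3}\Delta=2$. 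If $q_0=0$ and $g_0\neq 0$, the fiber is three concurrent lines ($IV$), $X$ is smooth, and $x_3\mid q$ forces $x_3\mid q^3+27x_3x_4g$, so $\mathrm{ord}_{x_3}\Delta=4$. Finally, if $q_0=g_0=0$, a direct check along the line $V(x_3,x_2)$ shows that $X$ is singular, while $\mathrm{ord}_{x_3}\Delta\geq 3$ takes a shape disallowed by the lemma.

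For $t_0\in\PP^1\setminus\{0,\infty\}$ I would run an analogous computation on $F_{t_0}$. A simple zero of $g$ alone gives an $I_1$ fiber with $X$ smooth; a simple zero of $q^3+27x_3x_4g$ alone gives $I_3$ with $X$ smooth; a common simple zero (which forces $q(t_0)=0$ as well) yields a type $IV$ fiber and the allowed fourfold root of $\Delta$ while keeping $X$ smooth; a double zero of $g$ alone at $t_0$ makes both $\partial g/\partial x_3$ and $\partial g/\partial x_4$ vanish at a specific point of $F_{t_0}$, so $X$ acquires a singularity there; a double zero of $q^3+27x_3x_4g$ alone yields an $I_6$-type configuration with $X$ again singular. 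Collecting all cases gives the claimed equivalence. The main bookkeeping burden lies in the ramification case, where several simultaneous divisibility conditions on $q$ and $g$ interact; the Kodaira classification of singular fibers together with the Euler number identity $\sum \chi(F_t)=24$ provides a useful consistency check throughout, and no single step appears conceptually difficult.
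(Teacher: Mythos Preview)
Your plan is correct and would yield a complete proof once the non-ramified case analysis is filled in (you should also note that the ``double zero of $g$'' argument covers the mixed case where $q$ vanishes simultaneously, since the singularity at $(0{:}0{:}a{:}b)$ depends only on $\nabla g$). The route, however, differs from the paper's. You proceed by a direct Jacobian computation, splitting on the vanishing of $q$ and $g$ at each $t_0\in\PP^1$ and reading off both the residual cubic's Kodaira type and the order of $\Delta$ by hand. The paper instead invokes the hidden $3$-torsion section on the generic fiber (established in \cite{ramsschuett}) together with the Kodaira--Tate table: since the torsion forces every singular fiber of a smooth $X\in\mZ$ to be one of $I_1,I_2,I_3,IV$, a fourfold root of $\Delta$ at a zero of $q^3+27x_3x_4g$ is automatically of type $IV$ and hence a smooth degeneration, with no explicit cubic needed; only the two ``bad'' configurations (a square factor of $g$ away from $0,\infty$, or $x_3^2\mid g$ resp.\ $x_4^2\mid g$) are then checked to produce singularities. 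Your approach is more elementary and entirely self-contained, at the cost of more bookkeeping; the paper's is shorter but leans on the structural results of \cite{ramsschuett}.
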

 
\begin{proof}
This is mostly \cite[Lemma~4.5]{ramsschuett}, but we give some details here for completeness.
It is trivial, but crucial to note that if $X$ is smooth,
the fibers of $\pi$ are the residual cubics,
so they consist of at most 3 components.
By the classification of Kodaira \cite{K} and Tate \cite{Tate},
this allows for six different types of singular fibers, listed below with corresponding vanishing order $v$ of $\Delta$:
$$
\begin{array}{r|cccccc}
\text{fiber type} & I_1 & I_2 & I_3 & II & III & IV\\
v(\Delta) & 1 & 2 & 3 & 2 & 3 & 4
\end{array}
$$

There is a hidden 3-torsion structure on the generic fiber of $\pi$
as explored in \cite[\S 3,4]{ramsschuett}.
This severely limits the possible singular fibers;
for instance, there cannot be any fiber of Kodaira type $II$ or $III$,
and  the $I_3$ fibers cannot degenerate to type $I_4$ or $I_5$
(compare \cite[Lemma 3.2 and Prop.~4.1]{ramsschuett}). 
Hence, for instance, if $\Delta$ has a fourfold root at a zero of $q^3+27x_3x_4g$,
then the degenerate fiber has automatically Kodaira type $IV$;
in particular the degeneration is smooth.
Similarly, a double root at $0$ or $\infty$ is easily seen to correspond
to the hyperplane $x_3=0$ or $x_4=0$ splitting off a conic and a line other than $\ell$
when intersected with $X$,
so this again gives smooth degenerations.

On the contrary, a double root of $\Delta$ outside $0,\infty$
necessarily occurs at the root of a square factor of $g$.
But then this results in a singularity of $S$ at the double root with $x_1=x_2=0$.
Along the same lines,
a triple root of $\Delta$ at $0$ or $\infty$ implies that $x_3^2$ or $x_4^2$ divides $g$,
causing a singularity on $X$ again.
\end{proof}

%

As recorded in the previous paragraphs,
the degenerations of $X \in \mZ$ where $x_3$ or/and $x_4$ divide $g$
cause the quartic $X$ to contain 1 or 2 additional lines meeting $\ell$
(see \cite[Example~6.9]{ramsschuett} for an explicit example).
This implies that \emph{ Claims~\ref{eq-numberoflines},~\ref{eq-minibound} which are true generically on $\mZ$,
are false for  certain specific examples of
ramification type $R=2^2$.}


We maintain the notation \eqref{eq:rl22} and collect  extra information on the lines $\ell' \neq \ell$ that meet $\ell$
in the lemma below.
\begin{lemm} 
\label{lem:varia}
 Let $\ell$ be a line of the second kind 
with ramification type $R=2^2$ on a smooth quartic $\XXf \subset \PP^3$. Moreover, we assume that 
  the fibers $F_{P_1}$, $F_{P_2}$ are  of Kodaira type $IV$.

\begin{enumerate}

\item[(a)] The fibration \eqref{eq:fibration} has no singular fibers of type $I_2$.

\item[(b)]
The line $\ell$ is met by exactly $18$ lines $\ell' \neq \ell$ on $\XXf$. They form six triplets of coplanar lines.


 \end{enumerate}
\end{lemm}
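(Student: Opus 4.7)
The plan is to work with the explicit $\mZ$-model. By \cite[Lemma~4.4]{ramsschuett} we may assume $X \in \mZ$ with $\ell = \mbox{V}(x_3, x_4)$ and $P_1 = (1:0:0:0)$, $P_2 = (0:1:0:0)$. By \eqref{eq:condIV}, the hypothesis that both $F_{P_1}$ and $F_{P_2}$ are of Kodaira type $IV$ is equivalent to $q = \gamma x_3 x_4$ for some $\gamma \in \KK$; a direct inspection of the equation moreover shows that this $IV$-hypothesis forces $x_3, x_4 \nmid g$ (so that the residual cubics in the planes $\mbox{V}(x_3)$ and $\mbox{V}(x_4)$ actually split into three distinct lines through $P_1$ resp.\ $P_2$).

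For part (a), the argument mirrors the proof of Lemma~\ref{lemm:15,16b}~(c): by \cite[Lemma~3.2]{ramsschuett}, any fiber of Kodaira type $I_2$ of \eqref{eq:fibration} must lie over a ramification point of $\pi|_\ell$, but the only ramification points $P_1, P_2$ carry fibers of type $IV$ by hypothesis.

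For part (b), substituting $q = \gamma x_3 x_4$ into the formula $\Delta = x_3 x_4 \cdot g \cdot (q^3 + 27 x_3 x_4 g)^3$ (from the proof of Proposition~\ref{prop-generic-in-mZ}~(a)) yields
\[
\Delta = x_3^4 \, x_4^4 \, g \, \bigl(\gamma^3 x_3^2 x_4^2 + 27 g\bigr)^3 .
\]
A common zero of $g$ and $\gamma^3 x_3^2 x_4^2 + 27 g$ in $\PP^1$ forces $x_3 x_4 = 0$ there, so these two quartics are coprime outside $\{P_1, P_2\}$. Together with $x_3, x_4 \nmid g$, this brings $\Delta$ into the generic root pattern of Lemma~\ref{lem:smooth}: both $g$ and $\gamma^3 x_3^2 x_4^2 + 27 g$ have four simple zeros on $\PP^1 \setminus \{P_1, P_2\}$. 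Using (a) and the $3$-torsion constraints of \cite[Lemma~3.2]{ramsschuett} to rule out type $II$ fibers away from $\{P_1, P_2\}$, one reads off from the orders of vanishing of $\Delta$ that \eqref{eq:fibration} has precisely $6$ reducible fibers: $F_{P_1}, F_{P_2}$ of type $IV$, plus four further reducible fibers at the zeros of $\gamma^3 x_3^2 x_4^2 + 27 g$ --- of Kodaira type $I_3$ when $\gamma \neq 0$, and of type $IV$ (coinciding with the zeros of $g$) when $\gamma = 0$. Since each reducible fiber lies in a plane of the pencil $|H - \ell|$ and splits into three lines on $X$ meeting $\ell$, we conclude that $\ell$ is met by exactly $6 \cdot 3 = 18$ other lines on $X$, arranged in six coplanar triplets.

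The main obstacle is the passage from Lemma~\ref{lem:smooth} and the $IV$-hypothesis at $P_1, P_2$ to the sharp root configuration of $\Delta$ above: the crucial point is that the hypothesis that $F_{P_1}, F_{P_2}$ are of type $IV$ not only fixes $q = \gamma x_3 x_4$, but also excludes $x_3 \mid g$ or $x_4 \mid g$, thereby preventing the extra fiber components near $P_1, P_2$ that would otherwise upset both the fiber-type classification and the count of reducible fibers.
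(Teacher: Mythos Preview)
Your argument is correct; part (a) is essentially the paper's proof (the paper cites \cite[Lemma~3.1]{ramsschuett} rather than Lemma~3.2, but the logic is identical). For part (b), however, you take a genuinely different route.

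The paper's proof of (b) is purely structural and very short: it quotes \cite[Prop.~4.1]{ramsschuett} (and the analysis of $R=2^2$ in the proof of \cite[Lemma~4.2]{ramsschuett}) to say that $\ell$ is always met by six coplanar triplets, then invokes \cite[Lemma~4.5]{ramsschuett} to say that any further line through $\ell$ would sit in an $I_2$ fiber, which (a) excludes. Your proof instead works out the discriminant explicitly after substituting $q=\gamma x_3x_4$, verifies via Lemma~\ref{lem:smooth} that $g$ and $\gamma^3 x_3^2x_4^2+27g$ must each be squarefree and supported away from $0,\infty$, and reads off the full singular-fibre configuration ($2\,IV\oplus 4\,I_3\oplus 4\,I_1$ for $\gamma\neq 0$, resp.\ $6\,IV$ for $\gamma=0$). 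This buys you more: the fibre configuration you obtain is exactly the information needed later in Corollary~\ref{cor:22segre}~(b), so your argument effectively front-loads part of that corollary. The paper's approach, by contrast, is quicker here but defers the fibre-type dichotomy.

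Two small remarks. First, when you invoke the $3$-torsion constraints to exclude unwanted additive fibres, the relevant type to rule out at the order-$3$ roots is $III$, not $II$ (type $II$ has $v(\Delta)=2$, which does not occur in your root pattern); the $3$-torsion argument in the proof of Lemma~\ref{lem:smooth} excludes both $II$ and $III$, so your conclusion stands. Second, your deduction that ``both $g$ and $\gamma^3x_3^2x_4^2+27g$ have four simple zeros'' uses smoothness of $X$ via Lemma~\ref{lem:smooth}, not just coprimality and $x_3,x_4\nmid g$; you do cite the lemma, but it would be cleaner to say explicitly that smoothness forces these polynomials to be squarefree.
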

\begin{proof}
{\sl (a)}  By \cite[Lemma~3.1]{ramsschuett}, a type $I_2$ fiber of the fibration \eqref{eq:fibration} 
must be either $F_{P_1}$ 
or  $F_{P_2}$.

{\sl (b)} By \cite[Prop.~4.1]{ramsschuett} (see also study of the case $R=2^2$ in the proof of  \cite[Lemma~4.2]{ramsschuett})
the line $\ell$ is always met by six triplets of coplanar lines on $\XXf$. 
By \cite[Lemma~4.5]{ramsschuett}, if  $\ell$ is met by either $19$ or $20$ lines $\ell' \neq \ell$, then each  line $\ell'$ that does not belong to one of the six triplets
   is a component of a fiber of type $I_2$ of the fibration \eqref{eq:fibration}.  
Thus (a) completes the proof.  
%
\end{proof}

Now we are in the position to 
give an intrinsic characterization of  
quartics with lines of the second kind with ramification type $R=2^2$ that can be obtained by Segre's
construction.
\begin{cor} \label{cor:22segre}
Let $\XXf \subset \PP^3$ be a smooth quartic
and let $\ell\subset \XXf$ be a line of the second kind 
with ramification type $R=2^2$.

(a) The quartic $\XXf$ is given by Segre's construction
\eqref{eq-segretrick} iff the line $\ell$ runs through the  singular points of two singular fibers of Kodaira type $IV$ 
of the fibration \eqref{eq:fibration}.

(b) If the assumption  of (a) is satisfied, then  
\begin{itemize}
\item the surface $\Ruledeight$ defined by $(\XXf, \ell)$ is a sextic that consists of the tangent planes $\mbox{T}_{P_1}\XXf$, 
 $\mbox{T}_{P_2}\XXf$ and an (irreducible) ruled quartic $\Ruledfour$,
\item the line $\ell$ is met by exactly six triplets of coplanar lines on $\XXf$ that are contained in the planes 
$\mbox{T}_{P_1}\XXf$, 
 $\mbox{T}_{P_2}\XXf$, and in four other planes $L_1$, $\ldots$, $L_4$,
\item the decomposition \eqref{eq-segretrick} is unique and coincides with \eqref{eq:segre1111},
\item the fibration \eqref{eq:fibration} is of type $2IV \oplus 4I_3 \oplus 4I_1$ (resp. $6 IV$) iff
$\sing(\Ruledfour)$ is a twisted cubic (resp. a line). 
\end{itemize} 
\end{cor}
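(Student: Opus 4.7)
The plan is to follow closely the strategy used for ramification type $R=2,1^2$ in Cor.~\ref{cor:211segre}, working inside the family $\mZ$ introduced in \eqref{eq:Z}. For the forward direction of (a), observe that under $R=2^2$ both fibers $F_{P_1}, F_{P_2}$ meet $\ell$ in a single point each, so Lemma~\ref{lem:ramIV} forces them to be of Kodaira type $IV$ whenever $\XXf$ arises from Construction~\ref{obs-segretrick}. For the converse, assume both $F_{P_1}, F_{P_2}$ are of type $IV$. The criterion \eqref{eq:condIV} lets us write $\XXf\in\mZ$ with $q=\gamma x_3x_4$. Substituting into the sextic $h$ from \eqref{eq:h} in the proof of Lemma~\ref{lem:S_6} gives the factorization
\[
h=x_3x_4\bigl(27x_1^3x_3+27x_2^3x_4+27\gamma x_1x_2x_3x_4-\gamma^3x_3^2x_4^2\bigr),
\]
so the surface $\Ruledeight$ decomposes as $\mbox{T}_{P_1}\XXf\cup\mbox{T}_{P_2}\XXf\cup\Ruledfour$ for a residual quartic $\Ruledfour$ which, by direct inspection of the remaining factor, contains neither of the two tangent planes.

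With the decomposition \eqref{eq:decS4tr} in hand, I invoke Lemma~\ref{lem:varia}(b): the line $\ell$ is met by exactly six triplets of coplanar lines, two of which lie in the planes $\mbox{T}_{P_i}\XXf$ (the components of the type $IV$ fibers $F_{P_i}$). The remaining four triplets span four distinct planes $L_1,\dots,L_4$, each intersecting $\XXf$ in $\ell$ plus three coplanar lines, and none equal to $\mbox{T}_{P_i}\XXf$. Thus both hypotheses of Lemma~\ref{lem:S4tr} are satisfied, yielding $\XXf=\lambda\Ruledfour+L_1\cdots L_4$ as required by Construction~\ref{obs-segretrick}. Intersecting $\Ruledfour$ with a general plane through $\ell$ shows that $\Ruledfour$ is covered by a family of lines, each meeting $\ell$, so Lemma~\ref{lemma-linewayfromsingularities} identifies $\Ruledfour$ as an irreducible ruled quartic with $\ell$ as directrix and $\sing(\Ruledfour)$ disjoint from $\ell$.

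For part (b), the first two bullets are already proved: the first from the decomposition above together with Lemma~\ref{lemma-linewayfromsingularities}, and the second from Lemma~\ref{lem:varia}(b). For uniqueness, suppose $\XXf=\tilde{\Ruledfour}+\tilde L_1\cdots\tilde L_4$ is any other decomposition as in Construction~\ref{obs-segretrick}. By Lemma~\ref{lemma-linewayfromsingularities} the quartic $\tilde{\Ruledfour}$ is a ruled surface whose singular locus is disjoint from $\ell$, so each curve $\tilde L_j\cap\XXf$ carries a singularity away from $\ell$. In contrast, the only singularity of $\mbox{T}_{P_i}\XXf\cap\XXf$ is the cusp/node of the type $IV$ fiber $F_{P_i}$, which lies at $P_i\in\ell$. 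Hence $\tilde L_j\neq\mbox{T}_{P_i}\XXf$, and Lemma~\ref{lem:varia}(b) forces $\{\tilde L_1,\dots,\tilde L_4\}=\{L_1,\dots,L_4\}$. Lemma~\ref{lem:4eqplanes} then gives $\tilde{\Ruledfour}=\Ruledfour$.

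For the final bullet, Lemma~\ref{lem:varia}(a) rules out $I_2$ fibers while we already have the two type $IV$ fibers $F_{P_1}, F_{P_2}$. If $\sing(\Ruledfour)$ is a twisted cubic, then by Cor.~\ref{cor:singfib}(b) each plane $L_j$ contains a singular $I_3$ fiber, contributing Euler number $2\cdot 4+4\cdot 3=20$; the missing $4$ is filled by four $I_1$ fibers using \cite[Lemma~4.2]{ramsschuett}. If $\sing(\Ruledfour)$ is a line of triple points, Cor.~\ref{cor:singfib}(a) instead yields four further type $IV$ fibers, so the Euler numbers sum exactly to $6\cdot 4=24$ and no other singular fibers occur. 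The main technical point is the clean factorization of $h$ after specialization $q=\gamma x_3x_4$; once this is available, every subsequent step reduces to invoking a lemma already established in Sections~\ref{sect-tech-prel}--\ref{s:corr2}.
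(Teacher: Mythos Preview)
Your proof is correct and follows essentially the same route as the paper's: the forward direction via Lemma~\ref{lem:ramIV}, the backward direction by specializing $q=\gamma x_3x_4$ in the sextic $h$ of \eqref{eq:h} to split off $\mbox{T}_{P_1}\XXf\cup\mbox{T}_{P_2}\XXf$ and then invoking Lemma~\ref{lem:S4tr} together with Lemma~\ref{lem:varia}, and the uniqueness and fiber-type arguments via Lemmas~\ref{lemma-linewayfromsingularities}, \ref{lem:4eqplanes} and Lemma~\ref{cor:singfib}. The only cosmetic slip is that you phrase ``$\sing(\Ruledfour)$ disjoint from $\ell$'' as a conclusion of Lemma~\ref{lemma-linewayfromsingularities} rather than as its hypothesis; this hypothesis is in fact satisfied because $\XXf=\lambda\Ruledfour+L_1\cdots L_4$ is smooth along $\ell$, so $\Ruledfour$ is too.
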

\begin{proof} (a) 
The implication ($\Rightarrow$) is a  consequence of Lemma~\ref{lem:ramIV}.

($\Leftarrow$) We can assume that $\XXf \in \mZ$ and  the fibers $F_{P_1}$, $F_{P_2}$ are of type $IV$.
By \eqref{eq:condIV}
 the quartic $\XXf$ is given by the equation \eqref{eq:Z} with  $q=\gamma x_3x_4$ for some $\gamma\in\KK$.
At first we follow the proof of   Lemma~\ref{lemma-ruledoctic} to
compute the equation of the surface $\Ruledeight$ defined by the pair
$(\XXf, \ell)$. 
To this end,
we plug $q=\gamma x_3x_4$ into \eqref{eq:h} to see that 
the sextic given by \eqref{eq:h} contains the planes $\mbox{V}(x_3)$,  $\mbox{V}(x_4)$.
Canceling out the factor $x_3 x_4$, we derive 
 the equation of the quartic $\Ruledfour$ residual to the above planes 
in the surface $\Ruledeight$ (cf.~Remark \ref{rem:eq-S_4}).
By direct computation 
\begin{equation} \label{eq:2lines}
\Ruledfour \mbox{ meets  each plane } \mbox{V}(x_3) \mbox{ and } \mbox{V}(x_4) \mbox{ along a union of two lines.}
\end{equation}
In particular, neither $\mbox{T}_{P_1}\XXf = \mbox{V}(x_3)$ nor $\mbox{T}_{P_2}\XXf = \mbox{V}(x_4)$ are components of the hypersurface $\Ruledfour$,
so $\Ruledfour$ is irreducible by Lemma \ref{lem:planeS8},
and we have
\begin{equation} \label{eq:decS8case22}
\Ruledeight = \Ruledfour \cup \mbox{T}_{P_1}\XXf \cup \mbox{T}_{P_2}\XXf .
\end{equation}
Lemma~\ref{lem:varia}~(b) and \eqref{eq:decS8case22} imply that the assumptions of Lemma~\ref{lem:S4tr} are fulfilled. Thus $X$ is given by   \eqref{eq:segre1111}.
Furthermore,  
by definition of $\Ruledeight$ and \eqref{eq:2lines}, the quartic $\Ruledfour$ is a union of a family of lines each of which meets $\ell$. Thus it is ruled (in particular, 
it is irreducible -- see Lemma~\ref{lemma-linewayfromsingularities}).  This completes the proof of (a).

(b) The first claim follows from \eqref{eq:decS8case22}. The second claim results immediately from Lemma~\ref{lem:varia}. 

To prove the next part of (b) (i.e. the uniqueness) we can follow almost verbatim the proof in case of ramification $2,1^2$ (see Cor.~\ref{cor:211segre}).
We suppose that $\XXf$ is given by \eqref{eq-segretrick} and show that (compare \eqref{eq:nottan})
$$
\TLL_1 \neq  \mbox{T}_{P_1}\XXf, \mbox{T}_{P_2}\XXf . 
$$
Thus we arrive at the equality \eqref{eq:4planesequal} and apply Lemma~\ref{lem:4eqplanes}.

Finally, the fibration \eqref{eq:fibration}  is assumed to have two fibers of type $IV$. It has  
four fibers of type $I_3$ (resp. four extra fibers  of type $IV$) iff $\sing(\Ruledfourf)$ is a twisted cubic (resp. a line of triple points) by Cor.~\ref{cor:singfib}. 
Lemma~\ref{lem:varia}~(a) combined with \cite[Lemma~4.2]{ramsschuett} completes the proof.
\end{proof} 

\begin{rem}
\label{rem:eq-S_4}
In case of a quartic in the family $\mZ$,
the condition of Corollary \ref{cor:22segre} (a) translates to $q =\gamma x_3x_4$ for some  $\gamma\in\KK$.
Then the ruled quartic $\Ruledfour$ is given by the polynomial
$$ 
S_4:\;\;\; x_3x_1^3+x_4x_2^3+  \gamma  x_1x_2x_3x_4 -\gamma^3 x_3^2x_4^2/27.
$$
As one can check, the singular locus of $\Ruledfour$ is a twisted cubic (resp. a line) iff $\gamma \neq 0$ (resp. $\gamma=0$). 
\end{rem}

\section{Proof of Theorem \ref{thm}}
\label{s:thm}

Up to projective equivalence, the families $\mT, \mZ$ are 6-dimensional as one can still rescale coordinates
while preserving the two normalisations of Lemma \ref{lem:2,1^2} resp.~\eqref{eq:Z} 
to eliminate two of the 8 parameters.
As  the singular fibers generally differ,
we infer that neither family is a subfamily of the other.
By inspection of the singular fibers,
the same can be said about the ramification type $R=1^4$,
but we omit the details here since the explicit equations become too involved.
This explains why proving statements about ramification types $R=2,1^2$ and $R=2^2$
by degenerating from $R=1^4$,  as apparently believed to work by Segre for Claims~\ref{eq-splittingquartic} -- \ref{eq-minibound},
is bound to fail.

\subsubsection*{Proof of Theorem  \ref{thm} (a)}
For the ramification type $R=1^4$ it was proved in Proposition \ref{prop:5.2}
that any such smooth quartic takes the shape of \eqref{eq:segint}.

For ramification type $R=2,1^2$ it was proved in Corollary \ref{cor:6.7}
that any smooth quartic with a line $\ell$ of the second kind
of ramification type $R$ takes the shape of \eqref{eq:segint}
if and only if the singular fiber of \eqref{eq:fibration} met by $\ell$ in a single point
(which generically has Kodaira type $I_1$)
degenerates to type $IV$,
that is, iff \eqref{eq:segcond} holds.
In terms of the explicit family $\mT$,
the codimension 1 condition $a=0$ is given in Lemma \ref{lem:6.3} (b).
This proves the extra claim of Theorem \ref{thm} for this ramification type.


For ramification type $R=2^2$ it was proved in Corollary \ref{cor:22segre} (a)
that any smooth quartic with a line $\ell$ of the second kind
of ramification type $R$ takes the shape of \eqref{eq:segint}
if and only if  both singular fibers of \eqref{eq:fibration} met by $\ell$ in a single point
(which generically have Kodaira type $I_1$)
degenerate to type $IV$.
Euiqvalently, the fibration \eqref{eq:fibration} satisfies \eqref{eq:segcond}.
As for the extra claim for ramification type $R=2^2$,
the codimension 2 conditions are given in terms of the 
 family $\mZ$ in Remark \ref{rem:eq-S_4}.

\subsubsection*{Proof of Theorem  \ref{thm} (b)}

This is exactly Lemma \ref{lem:2,1^2} resp.~\cite[Lemma 4.4]{ramsschuett}.

\subsection*{Acknowledgement}

We thank Wolf Barth and Jaap Top for helpful discussions.

\end{document}